\theoremstyle{plain}
\newtheorem{thm}{\protect\theoremname}
  \theoremstyle{definition}
  \newtheorem{defn}[thm]{\protect\definitionname}
  \theoremstyle{plain}
  \newtheorem{prop}[thm]{\protect\propositionname}
  \theoremstyle{plain}
  \newtheorem{cor}[thm]{\protect\corollaryname}
  \theoremstyle{remark}
  \newtheorem{rem}[thm]{\protect\remarkname}
  \theoremstyle{plain}
  \newtheorem{lem}[thm]{\protect\lemmaname}
  \theoremstyle{definition}
  \newtheorem{example}[thm]{\protect\examplename}
  \providecommand{\corollaryname}{Corollary}
  \providecommand{\definitionname}{Definition}
  \providecommand{\examplename}{Example}
  \providecommand{\lemmaname}{Lemma}
  \providecommand{\propositionname}{Proposition}
  \providecommand{\remarkname}{Remark}
\providecommand{\theoremname}{Theorem}
\begin{document}

\title{Analysis of the one dimensional inhomogeneous Jellium model with
the Birkhoff-Hopf Theorem }

\author{Raphael Ducatez}
\maketitle
\begin{abstract}
We use the Hilbert distance on cones and the Birkhoff-Hopf Theorem
to prove decay of correlation, analyticity of the free energy and
a central limit theorem in the one dimensional Jellium model with
non constant density charge background, both in the classical and
quantum cases. 
\end{abstract}
\tableofcontents{}

\section{Introduction}

The Jellium model describes a system of electrons interacting with
each other in a continuous background of opposite charge. It is a
very fundamental system in quantum chemistry and condensed matter
physics \cite{GiuVig-05,ParYan-94,brack1993physics}. The model has
been initially introduced by Wigner \cite{wigner1934interaction}.
In (quasi-)one dimension it has then been rigorously studied when
the background is uniform by Kunz \cite{KUNZ1974303}, Brascamp-Lieb
\cite{brascamp2002some}, Aizenman-Martin \cite{aizenman1980structure}
and many others \cite{blanc2015crystallization,Baxter-63,Choquard-75,GruLugMar-78,JanLieSei-09,AizJanJun-10,JanJun-14,lewin2017statistical}.
This model is known to reveal a symmetry breaking called the Wigner
crystal. One major difficulty is that the Coulomb potential is long
range. In dimension one, the interaction is like $-|x-y|$ and therefore
the force between two particles does not depend on their mutual distance
which simplifies a lot the problem. 

In this paper we study the inhomogeneous Jellium model in which the
background is not constant. The inhomogeneous case is very important
for applications, at least in three dimensions \cite{GruLebMar-81,HohKoh-64,LunMar-83}.
The Wigner crystal still appears for a periodic background, provided
that the charge in one period is equal to the charge of the particles.
Here we consider any background in one dimension and the system will
not necessarily be crystallized.

One of the most important properties of the constant background model
is that the classical partition function for $N$ particles can be
written in the form
\[
\mathcal{Z}_{N}(\beta)=\langle a,T(\beta)_{\text{ }}^{N}b\rangle
\]
where $T(\beta)$ is a compact operator with positive kernel in some
$L^{2}$ space, which depends smoothly on the inverse temperature
$\beta$. By the Krein-Rutmann Theorem \cite{amann1976fixed}, $T(\beta)$
has a unique largest eigenvalue $\lambda(\beta)>0$ which is always
non degenerate, hence is also a smooth function of $\beta$. As a
consequence, the free energy per particle behaves as 
\[
f_{N}(\beta)=-\frac{1}{N\beta}\log(\mathcal{Z}_{N}(\beta))=-\frac{1}{\beta}\log(\lambda(\beta))-\frac{1}{\beta N}\log(\langle a,v\rangle\langle v,b\rangle)+O(\kappa^{N})
\]
for large $N$, where $v$ is the unique positive eigenvector associated
with $\lambda(\beta)$ and $\kappa<1$. In fact, $T(\beta)^{N}/\lambda(\beta)^{N}$
is close to the rank-one projection on $v$ and this can also be used
to prove the decay of correlations. 

In the inhomogeneous case, the classical partition function takes
the form
\begin{equation}
\mathcal{Z}_{N}(\beta)=\left\langle a,\prod_{0\leq i\leq N-1}T_{i}(\beta)b\right\rangle \label{eq:FonctionDePartition-1-1}
\end{equation}
where the transitive operators $T_{i}(\beta)$ are no longer equal
to each other. Our goal is to generalize the results proved in the
homogeneous case to the inhomogeneous case. For this we will replace
the spectral approach based on the Krein Rutmann theorem by the Birkhoff-Hopf
theorem \cite{birkhoff1957extensions,hopf1963inequality}. The main
idea behind this method is to quantify how a product of many operators
with positive kernels can be well approximated by a rank-one operator.
A main tool is the so-called Hilbert distance on cones, a concept
which will be discussed at length later on.

Using these tools we will prove the decay of correlations and the
smoothness of the free energy in the inhomogeneous Jellium model.
In the classical case, we can essentially handle any background, but
in the quantum case we require it to be close to a constant. Our method
is general and can be applied to other one dimensional inhomogeneous
systems in statistical physics like the Ising model. It could also
be useful for log gases \cite{ErdosUniversality,forrester2010log}.
For this reason, we will present the theory in the abstract framework
of cones on any Banach spaces, in a form which is well suited to the
setting of statistical physics. 

Our paper is organized as follows. We first describe in Section \ref{sec:Classique}
the Jellium model and state our main results both for the classical
and the quantum cases. We then introduce in Section \ref{sub:Framework-and-Birhoff}
the framework required to state the Birkhoff-Hopf Theorem. Afterward,
we suggest a new formulation of weak ergodicity using rank-one operators
and prove it in Section \ref{sec:Rank-one-operator}. As it is shown
in Section \ref{sec:Decay-of-correlation}, the rank-one approximation
implies that the $k$-particle marginals are well approximated by
(independent) products of the 1-particle marginals. In Section \ref{sec:Smoothness-of-the}
we prove the regularity of the abstract free energy. Finally we deal
with the inhomogeneous Jellium model. Section \ref{sub:proofClass}
and Section \ref{sec:Quantique} are dedicated to the proof of the
classical and quantum cases, respectively. The main result of theses
two sections is the construction of an appropriate cone such that
the theorems of the previous sections can be applied.

\paragraph*{Acknowledgement:}

I thank my PhD advisor Mathieu Lewin for proposing this problem, useful
discussions and assiduous reading. I am also grateful to Lenaic Chizat
who first mentioned me the Birkhoff-Hopf Theorem. This project has
received funding from the European Research Council (ERC) under the
European Union's Horizon 2020 research and innovation programme (grant
agreement MDFT No 725528).

\section{The Jellium model \label{sec:Classique}}

In this section, we present the Jellium model and state all our results.
The proofs will be given in Section \ref{sub:proofClass} and Section
\ref{sec:Quantique}.

\subsection{The classical Jellium model}

\subsubsection{Mathematical formalism}

We consider $N$ particles of negative charges $q_{1},\text{\dots},q_{N}$
placed on a line $-L<x_{1}<x_{2}<\text{\dots}<x_{N}<L$ in an inhomogeneous
fixed density of charge $\rho\in L^{1}([-L;L])$ such that $\int_{-L}^{L}\rho(s)ds=-\sum_{i=1}^{N}q_{i}$.
The one dimensional solution of $u''=2\delta_{0}$ is $u(x)=-|x|$,
which gives us the total energy of the system 
\begin{align*}
E(x_{1},x_{2},\text{\dots\ ,}x_{N}) & =-\frac{1}{2}\iint_{[-L,L]^{2}}\rho(y_{1})\rho(y_{2})|y_{1}-y_{2}|dy_{1}dy_{2}\\
 & \qquad-\frac{1}{2}\sum_{1\leq i,j\leq N}q_{i}q_{j}|x_{i}-x_{j}|+\sum_{i=1}^{N}q_{i}\int_{-L}^{L}\rho(y)|x_{i}-y|dy.
\end{align*}
The first term is the background-background interaction, the second
term accounts for the electron-electron interaction and the third
term for the background-electron interaction. 

Let us first calculate the state of minimum energy. For each particle
$i$ the position $\tilde{x_{i}}$ which minimizes the energy is such
that
\[
\int_{-L}^{\tilde{x_{i}}}\rho(y)dy=\sum_{1\leq j<i}q_{j}+\frac{q_{i}}{2}.
\]
 It is the condition that for each particle there is the same amount
of charge on its right side and on its left side, such that the particle
is at equilibrium. In the homogeneous case, we have for any $i$,
$\rho|\tilde{x}_{i+1}-\tilde{x}_{i}|=q$. Therefore at $T=0$, the
electrons are located on $\frac{q}{\rho}\mathbb{Z}$ (the Wigner cristal).
But for a general background the lattice is not necessarily a solution.

We subtract the minimum of the energy and rewrite it as

\[
E(x_{1},\text{\dots},x_{N})=E(\tilde{x}_{1},\text{\dots},\tilde{x}_{N})+2\sum_{i=1}^{N}q_{i}\int_{\tilde{x}_{i}}^{x_{i}}\rho(y)(y-x_{i})dy.
\]
We denote by 
\[
U_{i}(s)=-2q_{i}\int_{\tilde{x}_{i}}^{\tilde{x_{i}}+s}(y-\tilde{x}_{i}-s)\rho(y)dy
\]
the potential felt by the $i^{th}-$particle around its stable position. 

We are interested in the canonical model at positive temperature.
The position of the particles $x_{i}$ are now random and the probability
of a set of positions $(x_{i})_{i=1,...,N}$ is proportional to $e^{-\beta E(x_{1},\cdots,x_{N})}$
(Gibbs measure). 

The relevant physical properties of the system are obtained from the
partition function given by 

\[
\mathcal{Z}_{N}(\beta)=e^{-\beta E(\tilde{x}_{1},\text{\dots},\tilde{x}_{N})}\idotsint_{-L<x_{1}<x_{2}<\text{\dots}<x_{N}<L}\prod_{i=1}^{N}e^{-2\beta q_{i}\int_{\tilde{x}_{i}}^{x_{i}}\rho(y)(y-x_{i})dy}dx_{i}
\]
and its free energy per particle
\[
f_{N}(\beta)=-\frac{1}{N\beta}\log(\mathcal{Z}_{N}(\beta)).
\]
We also introduce the marginals $\rho_{I}(x_{i_{1}},x_{i_{2}},\text{\dots},x_{i_{k}})$,
for the probability of the positions of the $k$ particles of the
subset $I=\text{\{}i_{1},i_{2},\cdots,i_{k}\}\subset\text{\{}1,\cdots,N\text{\}}$.
More rigorously, it is the unique function such that for all test
functions $g\in L^{\infty}([-L,L]^{k})$, 

\begin{align*}
 & \iiint g(x_{i_{1}},x_{i_{2}},\cdots,x_{i_{k}})\rho_{k}(x_{i_{1}},\cdots,x_{i_{k}})dx_{i_{1}}\cdots dx_{i_{k}}\\
 & \quad=\frac{e^{-\beta E(\tilde{x}_{1},\text{\dots},\tilde{x}_{N})}}{\mathcal{Z}_{N}(\beta)}\iint_{-L<x_{1}<x_{2}<\text{\dots}<x_{N}<L}g(x_{i_{1}},\cdots,x_{i_{k}})\times\\
 & \quad\qquad\times\prod_{k=1}^{N}e^{-2\beta q_{i}\int_{\tilde{x}_{i}}^{x_{i}}\rho(y)(y-x_{i})dy}dx_{i}.
\end{align*}
Our main interest is to know whether the particles are strongly correlated
or not. This can be quantified by looking at the truncated correlation
functions, which we introduce below.
\begin{defn}
(Cluster property) We say that 
\begin{itemize}
\item the particles are \emph{independent} if 
\[
\rho_{\text{\{}1,\cdots,N\text{\}}}(x_{1},...,x_{N})=\prod_{i=1}^{n}\rho(x_{i}),
\]

\item the particles satisfy a \emph{cluster property} if there exists $I\cup J=\text{\{}1,\cdots,n\text{\}},I\cap J=\emptyset$
such that 
\[
\rho_{\text{\{}1,\cdots,N\text{\}}}(x_{1},...,x_{n})=\rho_{|I|}((x_{i})_{i\in I})\rho_{|J|}((x_{j})_{j\in J}).
\]

\end{itemize}
\end{defn}
In order to characterize the ``clusters'' we introduce the truncated
marginal:
\begin{defn}
(Truncated marginal) The \emph{truncated marginals} $\rho_{k}^{T}$
are defined recursively as follows:
\[
\rho_{J}^{T}(x_{j_{1}},...,x_{j_{k}})=\rho_{J}(x_{j_{1}},...,x_{j_{k}})-\sum_{I_{1}\cup I_{2}\cup...\cup I_{r}=J}\prod_{l=1}^{r}\rho_{I_{l}}^{T}((x_{i})_{i\in I_{l}}).
\]

\end{defn}
The truncated marginals appear to be the good indicator for clustering
properties. Indeed we have the following proposition.
\begin{prop}
\label{PropMarginalTronque}If $\rho_{n}(x_{1},...,x_{n})=\rho_{|I|}((x_{i})_{i\in I})\rho_{|J|}((x_{j})_{j\in J})$
then for all $I'$ such that $I'\cap I\neq\emptyset$ and $I'\cap J\neq\emptyset$
then $\rho_{|I'|}^{T}((x_{i})_{i\in I'})=0$. 
\end{prop}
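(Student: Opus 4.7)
The plan is to proceed by strong induction on $|I'|$, combining two ingredients: the dual form of the recursive definition (which writes the ordinary marginal as a sum over \emph{all} set partitions of products of truncated marginals), and the fact that marginalization of a factorized joint density stays factorized.

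Set $I'_1 = I' \cap I$ and $I'_2 = I' \cap J$; both are non-empty by hypothesis, so $|I'|\geq 2$. Integrating the assumed factorization $\rho_n = \rho_{|I|}\rho_{|J|}$ over the variables with indices outside $I'$ yields $\rho_{I'} = \rho_{I'_1}\rho_{I'_2}$. Moreover, isolating $\rho^T_K$ in the defining recursion gives the dual identity
\[
\rho_K\bigl((x_i)_{i\in K}\bigr) \;=\; \sum_{K_1\sqcup\cdots\sqcup K_r = K}\;\prod_{l=1}^r \rho^T_{K_l}\bigl((x_i)_{i\in K_l}\bigr),
\]
where the sum now ranges over \emph{all} partitions of $K$, including the trivial one.

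The base case $|I'|=2$ is immediate: writing $I'=\{i,j\}$ with $i\in I$, $j\in J$, the recursion together with the factorization of $\rho_{I'}$ gives $\rho^T_{\{i,j\}} = \rho_1(x_i)\rho_1(x_j) - \rho^T_{\{i\}}\rho^T_{\{j\}} = 0$. For $|I'|>2$, apply the recursion to $I'$ and split the resulting sum over partitions $I_1\sqcup\cdots\sqcup I_r = I'$ ($r\geq 2$) into \emph{non-straddling} partitions (every block lies entirely in $I$ or entirely in $J$) and \emph{straddling} ones (some block meets both $I$ and $J$). Since $r \geq 2$ forces every block to have strictly fewer than $|I'|$ elements, the induction hypothesis annihilates every product in the straddling piece. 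The non-straddling partitions of $I'$ are in bijection with pairs (partition of $I'_1$, partition of $I'_2$), and the products factorize accordingly, so by the dual identity their total contribution equals $\rho_{I'_1}\rho_{I'_2}=\rho_{I'}$. Substituting back, $\rho^T_{I'} = \rho_{I'} - \rho_{I'} = 0$.

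There is no real analytic obstacle here; the argument is the classical Möbius-inversion/moment-cumulant identity on the partition lattice, adapted to the setting where $\rho_n$ factorizes over the disjoint union $I \sqcup J$ instead of into singletons. The one bookkeeping point is to notice that the trivial partition $\{I'\}$ is itself straddling (since $I'$ straddles by assumption), so it does not appear among the non-straddling partitions and does not disrupt the clean identification $\sum_{\text{non-straddling}}\prod \rho^T = \rho_{I'_1}\rho_{I'_2}$.
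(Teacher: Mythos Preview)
Your proof is correct and follows essentially the same route as the paper's own argument in Appendix~A: strong induction on $|I'|$, splitting the partition sum into straddling and non-straddling pieces, killing the former by the induction hypothesis, and recognizing the latter as the product $\rho_{I'\cap I}\,\rho_{I'\cap J}$ via the dual (moment--cumulant) identity. Your write-up is in fact more explicit than the paper's---you spell out the marginalization step $\rho_{I'}=\rho_{I'_1}\rho_{I'_2}$, the dual identity, and the bookkeeping about the trivial partition---but the underlying argument is identical.
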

For the reader's convenience we have written the proof of Proposition
\ref{PropMarginalTronque} in Appendix A. We are now ready to state
our main results.

\subsubsection{Main results}

In the classical case we make the following assumptions:
\begin{itemize}
\item (H1) There exist $q,Q>0$ such that for all $i,0<q\leq q_{i}\leq Q$
. 
\item (H2) There exists $0<m<M$ such that for all $t\in[-L,L],m\leq\rho(t)\leq M$. 
\end{itemize}
These assumptions (H1,H2) imply the following bounds for the potential:
\[
U_{i}(s)\geq qms^{2}
\]
 and 
\[
\frac{d}{ds}U_{i}(s)\geq smq.
\]

Our first result is to be understood as follows: If we consider particles
which are far away from each other (meaning that there are a lot of
other particles between them) then the marginal is exponentially close
to the independent marginal. We also get the cluster property: if
groups of particles are far from each others, then the marginal is
exponentially close to the independent cluster marginal. 
\begin{thm}
\label{Theo-decay-correlation-Jelium-Classic}For any $\beta>0$,
there exists $\kappa<1$ such that for any $I\subset\text{\{}0,\cdots,N\text{\}},$
$|I|=k$, we have

\[
\left|\rho_{I}(x_{i_{1}},x_{i_{2}},\text{\dots},x_{i_{k}})-\prod_{i_{l}\in I}\rho_{\text{\{}i_{l}\text{\}}}(x_{i_{l}})\right|\leq C_{k}\kappa^{d}
\]
for some $C_{k}>0$, provided that between any two consecutive particle
in $I$ there are at least $d$ others particles (in practice take
$d=\inf|i_{l}-i_{l+1}|-1)$. Also we have
\[
|\rho_{I}^{T}(x_{i_{1}},x_{i_{2}},\text{\dots},x_{i_{k}})|\leq C_{k}\kappa^{D}
\]
when there exist two consecutive particles in $I$ with at least $D$
others particles between them (in practice take $D=\max|i_{l}-i_{l+1}|-1)$. 
\end{thm}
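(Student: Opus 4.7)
The strategy is to cast the Jellium marginal in the transfer-operator form of~\eqref{eq:FonctionDePartition-1-1} and then apply, as a black box, the rank-one approximation of Section~\ref{sec:Rank-one-operator} and the abstract decay of correlations of Section~\ref{sec:Decay-of-correlation}. First I would rewrite the partition function using the change of variables $s_i = x_i - \tilde x_i$; because the only coupling between particles is the ordering constraint $x_i < x_{i+1}$, the integrand splits into a product of single-particle Boltzmann weights $e^{-\beta U_i(s_i)}$ and indicator functions $\mathbf 1(x_i < x_{i+1})$, which can be assembled into kernels $T_i(s,s')$ acting on $L^{2}([-L,L])$ so that $\mathcal Z_N = \langle a, T_0\cdots T_{N-1} b\rangle$ for suitable $a,b$. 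The marginal $\rho_I(x_{i_1},\ldots,x_{i_k})$ for $I=\{i_1<\cdots<i_k\}$ then admits a block decomposition
\[
\rho_I = \frac{1}{\mathcal Z_N}\bigl\langle a,\, A_0\, M_{x_{i_1}}\, A_1\cdots M_{x_{i_k}}\, A_k\, b\bigr\rangle,
\]
where $A_l$ is the product of the transfer operators strictly between the marked particles $i_l$ and $i_{l+1}$ and $M_{x_{i_l}}$ is a rank-one evaluation operator at $x_{i_l}$. Each $A_l$ contains at least $d$ (resp.\ at most $D$) factors $T_i$.

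The technical heart of the proof is the construction of a cone $\mathcal C\subset L^{2}([-L,L])$ invariant under every $T_i$ and such that $T_i(\mathcal C)$ has Hilbert diameter bounded uniformly in $i$ by some $\Delta<\infty$. A cone of positive functions with controlled logarithmic oscillation is the natural candidate: assumptions (H1)--(H2), together with the bounds $U_i(s)\geq qm s^{2}$ and $U_i'(s)\geq qms$, give Gaussian-type pointwise upper and lower estimates on the kernels $T_i(s,s')$ that do not depend on $i$, which in turn control the ratio $\sup T_i/\inf T_i$ on any fixed compact. The Birkhoff--Hopf theorem then yields a uniform contraction ratio $\kappa_0 = \tanh(\Delta/4) < 1$ for each $T_i$ in the Hilbert metric on $\mathcal C$, independently of the arrangement of the $\tilde x_i$. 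I expect this step to be the main obstacle: the cone must simultaneously be invariant under all $T_i$, contain $a$ and $b$, and remain small enough for its image to have finite Hilbert diameter, and the ordering-constraint factor makes the kernels only ``half-positive'', which requires a careful choice of reference vector.

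Once the cone is in place, Section~\ref{sec:Rank-one-operator} approximates each block $A_l$ in projective/operator norm by a rank-one operator $u_l\otimes v_l^{\ast}$ with multiplicative error of order $\kappa_0^{|A_l|}$. Substituting these approximations in the numerator and denominator of $\rho_I$ and cancelling common scalar factors, the leading term factorizes exactly as $\prod_l \rho_{\{i_l\}}(x_{i_l})$, and the remainders are controlled by $C_k \kappa^{d}$; this yields the first inequality of the theorem and is precisely the content of the abstract decay-of-correlations statement of Section~\ref{sec:Decay-of-correlation}. For the truncated marginal, I would single out the longest block, of length at least $D$, which separates the marked indices into two disjoint groups $I_1 \sqcup I_2 = I$. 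Replacing only that one block by its rank-one approximation yields $\rho_I = \rho_{I_1}\rho_{I_2} + O(\kappa^{D})$, and Proposition~\ref{PropMarginalTronque} forces the truncated marginal of the factorized part to vanish, leaving $|\rho_I^T|\leq C_k\kappa^{D}$ after using the polynomial (hence Lipschitz on bounded sets) dependence of truncated marginals on the $\rho_J$'s.
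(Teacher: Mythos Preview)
Your overall strategy---transfer-operator decomposition, Birkhoff--Hopf contraction on a suitable cone, then the abstract rank-one approximation of Section~\ref{sec:Rank-one-operator}---is exactly the paper's, and your treatment of the truncated marginal via Proposition~\ref{PropMarginalTronque} is fine. The genuine gap is the cone.

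Your candidate, ``positive functions with controlled logarithmic oscillation'', is essentially the naive positive cone $\mathcal C_0$ with a bounded-ratio condition, and this fails for precisely the reason you flag but do not resolve: the kernel of $T_i$ contains the indicator $\mathbf 1(s\geq x-\tilde x_{i+1}+\tilde x_i)$, so $T_i f$ vanishes identically to the right of the support of $f$ shifted by $\tilde x_{i+1}-\tilde x_i$. Two images $T_if$, $T_ig$ with different supports are then never comparable in $\mathcal C_0$ and the Hilbert diameter $\Delta_{\mathcal C_0}(T_i)$ is infinite---the paper makes this explicit in the Remark opening Section~\ref{sub:proofClass}. No amount of ``Gaussian upper and lower bounds on a fixed compact'' repairs this, because the lower bound is zero on the bad half-space; your claimed control of $\sup T_i/\inf T_i$ simply does not hold.

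The paper's way out is a handcrafted cone (Proposition~\ref{prop:ConeConstruc}) that is much more restrictive than positivity: functions must be decreasing on $[-A,A]$, satisfy a chain of inequalities $I_{k-1}(f)\leq \epsilon_k^{-1} I_k(f)$ between integrals over consecutive subintervals, be allowed to be slightly negative beyond $A$, and so on. These conditions are engineered so that a single application of $T_i$ strictly improves each inequality, which both gives invariance and forces finite projective diameter despite the half-supported kernel. A second subtlety you also skip is that the evaluation ``operators'' $M_{x_{i_l}}$ are not positive for this cone; the paper absorbs $m$ neighboring transfer operators into each insertion and shows $T_{K+m}\cdots T_{K+1}\delta_{x}^{(n)}T_K$ is order-preserving (Remark~\ref{Remark-:-iterPositif}), with a separate tail estimate when $|x_{i_l}|$ is large compared to the gap. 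Without these two ingredients your black-box invocation of Section~\ref{sec:Decay-of-correlation} has no cone on which to run.
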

Our next result concerns the regularity of the free energy, which
is a fundamental property for one-dimensional systems in statistical
physics. 
\begin{thm}
\label{Theo-Smooth-Jelium-Classical}For any $\beta_{0}>0$, there
exists $\Delta\beta>0$ such that the free energy is smooth on $[\beta_{0}-\Delta\beta,\beta_{0}+\Delta\beta]$
uniformly on $N$. More precisely we have

\[
|\frac{d^{k}}{d\beta^{k}}f_{N}|\leq M_{k},
\]
with $M_{k}>0$ independent of $N$ and for all $\beta\in[\beta_{0}-\Delta\beta,\beta_{0}+\Delta\beta]$.
\end{thm}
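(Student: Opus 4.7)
The plan is to cast the partition function in the transfer-operator form \eqref{eq:FonctionDePartition-1-1} and then invoke the abstract smoothness result of Section~\ref{sec:Smoothness-of-the}. After shifting each integration variable by its equilibrium position $\tilde x_i$, the integrand factorizes into one-body factors $e^{-\beta U_i(x_i-\tilde x_i)}$ coupled only by the ordering constraints $x_i<x_{i+1}$; this yields an explicit description of each $T_i(\beta)$ as an integral operator with positive kernel depending analytically on $\beta$ in a complex neighborhood of $\beta_0$. The quadratic lower bound $U_i(s)\geq qms^2$ guarantees integrability uniformly in $i$ and in $\beta$ near $\beta_0$.

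The core step is the construction of a cone $\mathcal{C}$ in a suitable weighted $L^2$ space on which every $T_i(\beta)$ acts and such that $T_i(\beta)\mathcal{C}$ has Hilbert diameter bounded by a universal $\Delta<\infty$, uniformly in $i$ and in $\beta\in[\beta_0-\Delta\beta,\beta_0+\Delta\beta]$ for some small $\Delta\beta>0$. By the Birkhoff-Hopf theorem, each $T_i(\beta)$ then contracts the Hilbert distance on $\mathcal{C}$ by at least $\tanh(\Delta/4)<1$. The natural candidate for $\mathcal{C}$ is the cone of positive functions sandwiched between two Gaussian weights of the form $e^{-\alpha x^2}$: the bound $U_i(s)\geq qms^2$ forces the kernel of $T_i(\beta)$ to have Gaussian concentration uniformly in $i$, giving the diameter bound, while the monotonicity estimate $\frac{d}{ds}U_i(s)\geq sqm$ on the restoring force keeps the image of the cone inside itself. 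Smooth (in fact analytic) dependence of $T_i(\beta)$ on $\beta$ is inherited directly from the formula for the kernel.

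With these ingredients in place, the abstract theorem of Section~\ref{sec:Smoothness-of-the} produces a splitting of the form $\log\mathcal{Z}_N(\beta)=\sum_i g_i(\beta)+R(\beta)$, where each $g_i$ is smooth with $k$-th derivative uniformly bounded in $i$ and in $\beta$, and $R(\beta)$ is an $O(1)$ remainder coming from the rank-one approximation of the full product together with the boundary vectors $a$ and $b$. Dividing by $-N\beta$ and differentiating $k$ times then yields $|\frac{d^k}{d\beta^k}f_N(\beta)|\leq M_k$ independently of $N$, as required.

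The principal obstacle is the uniform Hilbert-diameter bound on $T_i(\beta)\mathcal{C}$, since one single cone must work for infinitely many distinct kernels and for a whole interval of $\beta$'s. The strategy is to tune the Gaussian weights defining $\mathcal{C}$ using only the universal quadratic control supplied by (H1) and (H2), so that the individual features of each $U_i$ contribute only as bounded perturbations; once this is achieved, smooth $\beta$-dependence of the free energy follows essentially for free from smooth $\beta$-dependence of each kernel.
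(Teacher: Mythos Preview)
Your overall architecture matches the paper: write $\mathcal{Z}_N$ as a product of transfer operators $T_i(\beta)$, find a cone on which all $T_i(\beta)$ are uniformly Birkhoff--Hopf contracting, verify that the $\beta$-derivatives of $T_i$ are bounded in the norms of Proposition~\ref{prop:NormEqui-1}, and then invoke Theorem~\ref{thm:Smooth-1-1}. The decomposition $\log\mathcal{Z}_N=\sum_i g_i+R$ you describe is exactly the one appearing in the proof of that abstract theorem.

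The gap is the cone. Your proposal of ``positive functions sandwiched between two Gaussian weights $e^{-\alpha x^2}$'' cannot work, and the reason is visible from the shape of $T_i$. Recall
\[
T_i f(x)=\int_{x-(\tilde x_{i+1}-\tilde x_i)}^{\infty} e^{-\beta U_i(s)}f(s)\,ds .
\]
As $x\to -\infty$ the lower limit goes to $-\infty$ and $T_i f(x)\to\int_{\mathbb R} e^{-\beta U_i}f$, a positive constant; so $T_if$ has a flat left tail, not Gaussian decay. As $x\to +\infty$, $T_i f(x)$ decays like $e^{-\beta U_i(x-\delta_i)}$ times the local size of $f$, so the effective quadratic rate changes with each application of $T_i$ and with $i$. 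Hence no cone of Gaussian profile is invariant, let alone uniformly contracted. The paper in fact opens Section~\ref{sub:proofClass} by observing that even the full cone of nonnegative functions fails ($\Delta_{\mathcal C_0}(T_i)=\infty$) because the kernel has a hard cutoff $1_{s\geq x-\delta_i}$.

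What the paper does instead (Proposition~\ref{prop:ConeConstruc}) is to tailor the cone to the actual shape of $T_i f$: functions that are bounded below by their value at $-A$ on $(-\infty,-A]$, monotone decreasing on $[-A,A]$, allowed to be \emph{slightly negative} on $[A,\infty)$ (controlled by $\epsilon\, I_{k_{\max}}(f)$), together with a finite family of ratio constraints $I_{k-1}(f)\leq \epsilon_k^{-1} I_k(f)$ on interval averages. These six conditions are what makes the diameter finite uniformly in $i$; the $\epsilon_k$ are constructed recursively via a technical lemma (Lemma~\ref{lem:TechniEps}). Once this cone is in hand, Proposition~\ref{prop:SmoothProof} checks the norm bounds on $\partial_\beta^n T_i$ by comparing $\mathcal N_{i+1}$ to an explicit auxiliary norm $\|\cdot\|_\nu$ built from the six defining inequalities, and Theorem~\ref{thm:Smooth-1-1} finishes the proof.

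So your plan is right in outline, but the sentence ``the natural candidate for $\mathcal C$ is the cone of positive functions sandwiched between two Gaussian weights'' is precisely where the real work lies, and the Gaussian ansatz has to be replaced by the monotone/ratio cone of Proposition~\ref{prop:ConeConstruc}.
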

In the proof we will show the following estimate on $M_{k}$:
\begin{equation}
M_{k}\leq k!c^{k}\label{eq:derivateEstime}
\end{equation}
for some $c>0$. From this bound we obtain the analyticity of the
limiting free energy, when this limit exists.
\begin{thm}
\label{Theo-Smooth-Jelium-Classical-1}For any $\beta_{0}>0$, there
exists $\Delta\beta>0$ such that if there exists $f$ such that a
$f_{N_{k}}\rightarrow f$ on $[\beta_{0}-\Delta\beta,\beta_{0}+\Delta\beta]$
for a subsequence $N_{k}\rightarrow\infty$, then $f$ is real analytic
on $[\beta_{0}-\Delta\beta,\beta_{0}+\Delta\beta]$.
\end{thm}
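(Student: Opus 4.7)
The plan is to bootstrap from the quantitative derivative estimate \eqref{eq:derivateEstime} provided by Theorem \ref{Theo-Smooth-Jelium-Classical} to analyticity of the limit $f$. The crucial point is that the bound $|f_N^{(k)}(\beta)|\leq k!\,c^k$, valid on $J:=[\beta_0-\Delta\beta,\beta_0+\Delta\beta]$, is uniform in $N$ and is exactly the factorial growth rate characterizing real-analytic functions whose Taylor series have radius of convergence at least $1/c$.

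First I would fix an arbitrary $\beta_1\in J$ and write the Taylor expansion with Lagrange remainder
\[
f_N(\beta_1+h) = \sum_{k=0}^{K-1} \frac{f_N^{(k)}(\beta_1)}{k!}\,h^k + R_{N,K}(h), \qquad |R_{N,K}(h)| \leq (c|h|)^K,
\]
valid whenever $\beta_1+h\in J$. For $|h|<1/c$ the remainder tends to $0$ uniformly in $N$, so $f_N(\beta_1+h) = \sum_{k\geq 0} a_k^{(N)} h^k$, where the coefficients $a_k^{(N)}:=f_N^{(k)}(\beta_1)/k!$ satisfy $|a_k^{(N)}|\leq c^k$.

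To pass to the limit along the given subsequence $(N_k)$, I observe that for each fixed $k$ the sequence $\bigl(a_k^{(N_j)}\bigr)_j$ is bounded by $c^k$. A standard diagonal extraction yields a further subsequence, still denoted $(N_j)$, along which $a_k^{(N_j)} \to a_k(\beta_1)$ simultaneously for every $k$, with $|a_k(\beta_1)| \leq c^k$. The Weierstrass $M$-test with dominating series $\sum (c|h|)^k$ then permits interchanging limit and sum in $f_{N_j}(\beta_1+h) = \sum_k a_k^{(N_j)} h^k$, giving
\[
f(\beta_1+h) = \sum_{k=0}^{\infty} a_k(\beta_1)\,h^k
\]
for every $h$ with $|h|<1/c$ and $\beta_1+h\in J$. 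This exhibits $f$ as a convergent power series in a neighborhood of $\beta_1$; since $\beta_1\in J$ was arbitrary, $f$ is real analytic on $J$.

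The only genuine obstacle is not in this deduction but in establishing the sharp bound \eqref{eq:derivateEstime}, which supplies the crucial $k!$ growth and must be extracted from the Birkhoff--Hopf contraction estimates developed earlier; once that bound is in hand, the passage to the limit described above is essentially a restatement of the classical fact that a $C^\infty$ function with Taylor coefficients bounded geometrically is real analytic.
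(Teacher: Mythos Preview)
Your argument is correct and follows exactly the route the paper indicates: it states after Theorem~\ref{Theo-Smooth-Jelium-Classical} that the estimate \eqref{eq:derivateEstime} is what yields analyticity of the limit, and your Taylor-remainder plus diagonal-extraction argument is the standard way to make that deduction precise. The paper itself packages the same idea in the abstract Theorem~\ref{TheoremAnalytic}, but the content is identical.
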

As a corollary, the system will not reveal any phase transition for
$\beta\neq\infty$. 
\begin{cor}
If the charge background is periodic or if it is constructed randomly
with an ergodic process, there exists a limiting function $f$ such
that $f_{N}\rightarrow f$ (almost surely in the ergodic case) and
$f$ is real analytic on $(0,\infty)$.
\end{cor}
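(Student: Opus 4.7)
The plan is to establish the pointwise limit $f_N(\beta) \to f(\beta)$ at each fixed $\beta > 0$ by arguments independent of the analytic machinery, and then to promote this to real analyticity on $(0,\infty)$ by combining the uniform derivative bound of Theorem \ref{Theo-Smooth-Jelium-Classical} with Theorem \ref{Theo-Smooth-Jelium-Classical-1}.

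For pointwise convergence in the periodic case, observe that the transfer operators $T_i(\beta)$ in \eqref{eq:FonctionDePartition-1-1} are periodic in $i$ with some period $P$, so the product over one period $S(\beta) = \prod_{i=0}^{P-1} T_i(\beta)$ is a fixed positive compact operator on the cone constructed in Section \ref{sub:proofClass}. Krein-Rutmann provides a unique simple dominant eigenvalue $\lambda(\beta)$, and the Perron projection argument recalled in the introduction, together with the Hilbert-distance control on the residual product over non-multiples of $P$, gives $f_N(\beta) \to -\log\lambda(\beta)/(P\beta)$. In the ergodic case, the rank-one approximation developed in Section \ref{sec:Rank-one-operator} shows that the Hilbert diameter of long products of the $T_i(\beta)$ contracts uniformly, which in turn implies that $-\log \mathcal Z_N(\beta)$ is subadditive up to a bounded deterministic error when viewed along the ergodic shift on the environment. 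Kingman's subadditive ergodic theorem then yields the almost sure existence of $f(\beta) = \lim_N f_N(\beta)$ at every fixed $\beta > 0$, and hence on a countable dense subset of $(0,\infty)$ simultaneously on a set of full probability.

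To promote this to analyticity, fix $\beta_0 > 0$ and let $I = [\beta_0-\Delta\beta,\beta_0+\Delta\beta]$ be the interval supplied by Theorem \ref{Theo-Smooth-Jelium-Classical}. The estimate \eqref{eq:derivateEstime} implies that the family $(f_N)_N$ is equicontinuous (and bounded) on $I$; combining this with pointwise convergence on a dense subset of $I$ yields, via Arzel\`a-Ascoli, uniform convergence $f_N \to f$ on $I$. Theorem \ref{Theo-Smooth-Jelium-Classical-1} now applies and gives that $f$ is real analytic on $I$. Since $\beta_0 > 0$ was arbitrary, $f$ is real analytic on all of $(0,\infty)$.

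The main obstacle is the ergodic case of the first step: one must verify the almost-subadditivity with deterministically bounded error of $\log \mathcal Z_N(\beta)$ along the ergodic shift. This is exactly what the Birkhoff-Hopf contraction is designed to do, providing a uniform Hilbert-metric bound on the rank-one error when splitting the full product of transfer operators into two half-products. Once this bound is in hand, Kingman's theorem, Arzel\`a-Ascoli, and Theorem \ref{Theo-Smooth-Jelium-Classical-1} are invoked as black boxes to assemble the conclusion.
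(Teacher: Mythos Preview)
The paper does not supply a proof of this corollary; it is stated as a direct consequence of Theorem \ref{Theo-Smooth-Jelium-Classical-1} once the existence of the limit $f_N\to f$ is taken for granted. Your argument fills in precisely this gap, and the overall scheme---establish pointwise convergence by structural arguments (Krein--Rutman for the period operator, Kingman for the ergodic environment), upgrade to locally uniform convergence via the equicontinuity coming from \eqref{eq:derivateEstime} and Arzel\`a--Ascoli, then invoke Theorem \ref{Theo-Smooth-Jelium-Classical-1}---is correct and is the natural route.

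Two points deserve tightening. First, $\mathcal Z_N$ carries the prefactor $e^{-\beta E(\tilde x_1,\dots,\tilde x_N)}$ in addition to the transfer-operator product, so in the ergodic case the convergence of $E(\tilde x_1,\dots,\tilde x_N)/N$ must also be argued separately; this is a direct application of Birkhoff's ergodic theorem to the background and the $q_i$, but it is not covered by your Kingman step. Second, for the ``almost-subadditivity with bounded defect'' you flag as the main obstacle, note that the boundary vectors $a,b$ in \eqref{eq:FonctionDePartition-1-1} depend on $N$ through $L$; the Birkhoff--Hopf contraction indeed reduces any choice of $a,b\in\mathcal C$ to a uniformly bounded additive error in $\log\mathcal Z_N$, but you should also record the $L^1$ integrability hypothesis needed for Kingman (which follows easily from (H1)--(H2)). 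An alternative that avoids Kingman altogether, and sits closer to the decomposition already used in the proof of Theorem \ref{thm:Smooth-1-1}, is to write $\tfrac1N\log\langle a,\prod_i T_i b\rangle$ as the Ces\`aro average of the increments $\log\|T_i u_i\|$: the exponential contraction makes each increment an exponentially good approximation of a function of finitely many shifted environment coordinates, so Birkhoff's ergodic theorem applies directly to the averaged sequence.
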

This generalizes the results of Kunz \cite{KUNZ1974303}.

\subsection{The quantum model}

\subsubsection{Mathematical formalism}

We now give our results for the quantum problem. In the classical
case, we neglect the kinetic energy because in phase space momentum
and position are independent for the Gibbs measure. This is no longer
true in the quantum case and we have to consider the whole $N$-particle
Hamiltonian 

\[
H=-\frac{1}{2}\sum_{i=1}^{N}\partial_{x_{i}}^{2}+E(x_{1},\cdots,x_{N}).
\]
For simplicity we choose Dirichlet boundary conditions at the two
ends $\mp L$. The quantum fermionic canonical function is 

\[
\mathcal{Z}_{N}^{Q}(\beta)=\text{Tr}(\exp(-\beta H))
\]
and the free energy is 
\[
f_{N}^{Q}(\beta)=-\frac{1}{\beta N}\log(\mathcal{Z}_{N}^{Q}(\beta)).
\]

We have the following Feynman-Kac formula \cite{jansen2014wigner}
for the partition function $\mathcal{Z}_{N}^{Q}(\beta)$.
\begin{prop}
(Feynman Kac formula)\label{(Feynman-Kac-formula)} We have 

\begin{equation}
\mathcal{Z}_{N}^{Q}(\text{\ensuremath{\beta}})=\int_{\mathcal{X}}\mu_{x_{1}x_{\text{1}}}\text{\dots}\mu_{x_{N}x_{N}}(e^{-\int_{0}^{\beta}U(\gamma_{1}(t),\text{\dots},\gamma_{N}(t))dt}1_{(\gamma_{1},\cdots,\gamma_{N})\in W_{N}})dx_{1}\text{\dots}dx_{N}\label{eq:FeynmanKac}
\end{equation}
 and
\[
\rho(\boldsymbol{x};\boldsymbol{y})=\frac{1}{Z_{N}}\mu_{x_{1}y_{1}}\times\mu_{x_{2},y_{2}}\times\text{\dots}\times\mu_{x_{N},y_{N}}(e^{-\int_{0}^{\beta}U(\gamma_{1}(t),\text{\dots},\gamma_{N}(t))dt})
\]
where $\mathcal{X}=\text{\{}(x_{1},\cdots,x_{N}):-L<x_{1}<\cdots<x_{N}<L\text{\}}$, 

\[
W_{N}=\text{\{}(\gamma_{1},\text{\dots},\gamma_{N})|\forall t\in[0,\beta]:-L<\gamma_{1}(t)<\gamma_{2}(t)<\text{\dots}<\gamma_{N}(t)<L\text{\}}
\]
is the Weyl chamber and $\mu_{x,y}$ are the probability measures
of a Brownian bridge from $x$ to $y$ of length $\beta$.
\end{prop}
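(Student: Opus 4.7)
The plan is to reduce the fermionic trace to a heat-kernel trace on the Weyl chamber $\mathcal{X}$ with Dirichlet boundary conditions, and then apply the Feynman--Kac formula for a Schrödinger semigroup on a bounded Lipschitz domain. The non-crossing indicator $1_{W_N}$ will emerge from the Dirichlet condition on the diagonals $\{x_i=x_{i+1}\}$, which is itself a consequence of fermionic antisymmetry in dimension one.

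First I would set up the unitary identification $L^2_{\mathrm{a}}([-L,L]^N)\cong L^2(\mathcal{X})$: an antisymmetric function is determined by its restriction to $\mathcal{X}$ (with the factor $\sqrt{N!}$ absorbed by the change of measure), and this restriction must vanish on the diagonal hyperplanes $\{x_i=x_{i+1}\}$ by antisymmetry. Since we start from Dirichlet boundary conditions on the outer faces $\{x_i=\pm L\}$, the image Hamiltonian $H_\mathcal{X}=-\tfrac12\sum_i\partial_{x_i}^2+E$ on $L^2(\mathcal{X})$ inherits Dirichlet boundary conditions on the full boundary $\partial\mathcal{X}$. In particular
\[
\mathcal{Z}_N^Q(\beta)=\operatorname{Tr}_{L^2_{\mathrm{a}}([-L,L]^N)}e^{-\beta H}=\operatorname{Tr}_{L^2(\mathcal{X})}e^{-\beta H_\mathcal{X}}=\int_\mathcal{X}K_\beta(\mathbf{x},\mathbf{x})\,d\mathbf{x},
\]
where $K_\beta$ is the integral kernel of $e^{-\beta H_\mathcal{X}}$. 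Next I would establish the Feynman--Kac representation
\[
K_\beta(\mathbf{x},\mathbf{y})=\mu_{x_1,y_1}\otimes\cdots\otimes\mu_{x_N,y_N}\!\left(e^{-\int_0^\beta U(\gamma_1(t),\dots,\gamma_N(t))\,dt}\,1_{(\gamma_1,\dots,\gamma_N)\in W_N}\right),
\]
arguing by a Trotter product splitting between the free evolution on $\mathcal{X}$ with Dirichlet killing at $\partial\mathcal{X}$ (whose bridge representation yields the factor $1_{W_N}$) and multiplication by $e^{-\tau E}$, and passing to the limit using that $E$ is bounded on $\mathcal{X}$. Substituting $\mathbf{y}=\mathbf{x}$ gives the partition function identity, and not taking the diagonal gives the density matrix identity, after division by $\mathcal{Z}_N^Q$.

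The main technical obstacle lies in the Feynman--Kac step: one must justify the Dirichlet killing at the \emph{inner} boundary of $\mathcal{X}$ (the diagonals), where two Brownian bridges would cross. This is the classical Karlin--McGregor non-crossing formula in disguise, which in the potential-free case is an exact determinantal identity; the presence of the bounded, symmetric potential $U=E$ preserves the argument since the exponential weight $e^{-\int U\,dt}$ commutes with the permutation action on paths. Boundedness of $E$ on $[-L,L]^N$ makes the potential Kato (indeed $L^\infty$), so no further regularity issue arises and the Trotter splitting converges in trace norm. Convexity of the Weyl chamber, combined with the fact that Brownian bridges almost surely do not touch $\partial\mathcal{X}$ at the endpoints for $\mathbf{x},\mathbf{y}\in\mathcal{X}$, ensures the indicator $1_{W_N}$ is well defined up to a $\mu_{\cdot,\cdot}$-null set.
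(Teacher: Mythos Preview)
The paper does not actually prove this proposition: it simply states it with a reference to \cite{jansen2014wigner} and moves on. Your outline is the standard route to this identity and is correct in substance. The key one-dimensional fact you invoke---that antisymmetry forces vanishing on the diagonals, so the fermionic problem on $[-L,L]^N$ is unitarily equivalent to the Dirichlet problem on the open Weyl chamber $\mathcal{X}$---is exactly what underlies the cited result, and the Feynman--Kac formula with absorption at $\partial\mathcal{X}$ then produces the non-crossing indicator $1_{W_N}$.

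Two small points to keep straight. First, since $\mu_{x,y}$ is declared to be the \emph{probability} law of the bridge, the kernel $K_\beta(\mathbf{x},\mathbf{y})$ should carry the free heat-kernel prefactor $\prod_i p_\beta(x_i,y_i)$; the paper's formulas suppress this normalisation, so your write-up should either do the same or flag it explicitly. Second, the density-matrix formula in the statement drops the indicator $1_{W_N}$, whereas your derivation of $K_\beta(\mathbf{x},\mathbf{y})$ retains it; your version is the one that is literally the kernel of $e^{-\beta H_\mathcal{X}}$, and the paper's display appears to be a minor slip (or a tacit convention that $\rho(\mathbf{x};\mathbf{y})$ is only considered for $\mathbf{x},\mathbf{y}\in\mathcal{X}$ with the Weyl-chamber constraint understood). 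None of this affects the soundness of your argument.
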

The random system we study in the quantum model is no longer the positions
$(x_{i})_{i\leq N}$ but rather the paths $(\gamma_{i})_{i\leq N}$.
We define the extended marginals on the set of paths $\rho^{\Gamma}(\gamma_{1},...,\gamma_{N})$
and we are able to apply the theorems of Section \ref{sec:Decay-of-correlation}
in this set up. However, for simplicity we will only states the results
on the position marginals $\rho_{k}(x_{i_{1}},...,x_{i_{k}})$ which
satisfy, for any bounded function $g:[-L,L]^{N}\rightarrow\mathbb{R}$,
\begin{align*}
 & \idotsint\rho_{k}(x_{i_{1}},\cdots,x_{i_{k}})g(x_{i_{1}},\cdots,x_{i_{k}})dx_{1}\cdots dx_{k}\\
 & \quad=\frac{1}{Z_{N}^{Q}(\beta)}\int_{-L<x_{1}<\cdots<x_{N}<L}\mu_{x_{1}x_{\text{1}}}\text{\dots}\mu_{x_{N}x_{N}}(e^{-\int_{0}^{\beta}U(\gamma_{1}(t),\text{\dots},\gamma_{N}(t))dt}1_{(\gamma_{1},\cdots,\gamma_{N})\in W_{N}})\\
 & \quad\quad g(x_{i_{1}},\cdots,x_{i_{k}})dx_{1}\text{\dots}dx_{N}
\end{align*}

\subsubsection{Main results}

Unfortunately, in the quantum case we are only able to prove a result
in a perturbation regime where $\rho$ and the $q_{i}$ are almost
constant. We therefore make the following assumptions :
\begin{itemize}
\item (HQ1) $\text{ }q(1-\epsilon)\leq q_{i}\leq q(1+\epsilon)$ for all
$i$.
\item (HQ2) $\rho(1-\epsilon)\leq\rho(t)\leq\rho(1+\epsilon)$ for all $t$. \end{itemize}
\begin{thm}
\label{Theo-decay-correlation-quantum}For any $\beta>0$, under condition
(HQ1-2) for $\epsilon>0$ small enough, there exists $\kappa<1$,
such that for all $I\subset\text{\{}1,\cdots,N\text{\}}$, $|I|=k$, 

\[
|\rho_{I}(x_{i_{1}},x_{i_{2}},\text{\dots},x_{i_{k}})-\prod_{i_{l}\in I}\rho_{\text{\{}i_{l}\text{\}}}(x_{i_{l}})|\leq C_{k}\kappa^{d}
\]
for some $C_{k}>0$, if between any two consecutive particle in $I$
there are at least $d$ others particles (in practice take $d=\inf|i_{l}-i_{l+1}|-1)$.On
the other hand, 
\[
|\rho_{I}^{T}(x_{i_{1}},x_{i_{2}},\text{\dots},x_{i_{k}})|\leq C_{k}\kappa^{D}
\]
if there exists two consecutive particles in $I$ with at least $D$
others particles between them (in practice take $D=\max|i_{l}-i_{l+1}|-1)$. 
\end{thm}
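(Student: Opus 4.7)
The plan is to reduce Theorem \ref{Theo-decay-correlation-quantum} to the abstract framework of Sections \ref{sub:Framework-and-Birhoff}--\ref{sec:Decay-of-correlation} by rewriting the Feynman--Kac path integral as a product of transfer operators. Starting from Proposition \ref{(Feynman-Kac-formula)}, I would introduce, on a suitable Banach space of functions of a Brownian bridge $\gamma$ of length $\beta$, a family of transfer operators $T_i=T_i(\beta,\epsilon)$ whose kernel encodes (i) the one-body potential $U_i$ integrated along the path, (ii) the pointwise ordering constraint $\gamma(t)<\gamma'(t)$ between two consecutive paths, and (iii) the appropriate Brownian bridge measure factors. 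These operators are built so that
\[
\mathcal{Z}_N^Q(\beta)=\bigl\langle a,\, T_1T_2\cdots T_{N-1}b\bigr\rangle
\]
for explicit boundary vectors $a,b$, in direct analogy with the classical transfer matrix representation underlying \eqref{eq:FonctionDePartition-1-1}.

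The core of the argument is then to construct a cone $\mathcal{C}$ in this path-space setting on which all the $T_i$ act with a uniformly bounded Hilbert projective diameter. When the background charge and the particle charges are exactly constant, all the $T_i$ reduce to a single reference operator $T_0$, for which such a cone can be built in the spirit of the homogeneous quantum Jellium analysis. For a general perturbation with small $\epsilon$, one would show that each $T_i$ differs from $T_0$ only by multiplicative factors of order $1+O(\epsilon)$ in its kernel, so $T_i$ still maps $\mathcal{C}$ into a subcone whose Hilbert diameter remains close to that of $T_0(\mathcal{C})$. The smallness of $\epsilon$ is precisely what guarantees strict positivity, and hence finiteness of the diameter, to be inherited from $T_0$.

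With such a cone at hand, the Birkhoff--Hopf theorem and the rank-one approximation developed in Section \ref{sec:Rank-one-operator} apply: products of the $T_i$ are exponentially well approximated by a rank-one operator in Hilbert distance, with a contraction rate $\kappa<1$ depending only on $\beta$ and $\epsilon$. Feeding this into the general decay-of-correlation estimates of Section \ref{sec:Decay-of-correlation}, applied first to the extended marginals $\rho^{\Gamma}$ on path space, yields the two claimed bounds on the position marginals $\rho_I$ and the truncated marginals $\rho_I^{T}$ after integrating out the paths: a gap of $d$ particles between two indices of $I$ corresponds to a block of $d$ consecutive $T_i$ in the product, whose exponential closeness to a rank-one operator translates directly into exponential factorization of the marginal.

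The main obstacle is the construction of the cone $\mathcal{C}$ in path space. Its elements are now functionals of Brownian bridges rather than functions on an interval, and the ordering constraint $\gamma_i(t)<\gamma_{i+1}(t)$ imposed at every time $t$ is a rigid coupling between consecutive paths that has no classical analogue. The cone must be wide enough to be invariant under the perturbed $T_i$ for small but nonzero $\epsilon$, yet narrow enough that $T_i(\mathcal{C})$ still has bounded Hilbert diameter in $\mathcal{C}$. Balancing these two requirements is what forces the perturbative assumptions (HQ1)--(HQ2) and explains why the quantum theorem is weaker than its classical counterpart Theorem \ref{Theo-decay-correlation-Jelium-Classic}.
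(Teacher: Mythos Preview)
Your overall strategy---rewrite the Feynman--Kac formula as a product of transfer operators on path space, build a cone from the homogeneous reference operator $T_0$, check stability under the $O(\epsilon)$ perturbation, then feed everything into the abstract decay-of-correlation machinery---is exactly the route the paper takes. Two points, however, are left genuinely open in your proposal and are where the actual work lies.

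First, the cone. You say a cone ``can be built in the spirit of the homogeneous quantum Jellium analysis,'' but you do not say how, and the implicit suggestion that one can proceed by analogy with the classical case (explicit positivity/monotonicity constraints on the kernel) is not what the paper does and would be hard to carry out for functionals of Brownian bridges. The paper instead uses a purely spectral construction (Theorem \ref{thm:ConeLargerEigenvector}): since $T_0$ is compact with a simple dominant eigenvalue $\lambda_M$ and a spectral gap, one builds $\mathcal{C}$ as the positive span of $\bigcup_n T_0^n\big(B(u_0,\epsilon(1-\epsilon_n))\big)$ around the Perron eigenvector $u_0$. Contraction then follows from the gap, not from any explicit inequalities on the kernel. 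The stability under small $\epsilon$ is then an operator-norm perturbation statement (the $T_i$ are compact perturbations of $T_0$), rather than a pointwise ``multiplicative factor $1+O(\epsilon)$'' on the kernel as you suggest.

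Second, the test operators. In the abstract Theorem \ref{theo-decay-of-correlation} the insertions $X_i$ must be positive operators for $\mathcal{C}$, but the natural test functions here (approximate Diracs on the path's starting point) are \emph{not} cone-preserving by themselves. The paper resolves this by showing that, for $\|f\|_{L^1}$ small, the sandwiched operator $T_{i+1}(1+f)T_i$ maps $\mathcal{C}$ into itself (using that $T_i(\mathcal{C})$ sits in the interior of $\mathcal{C}$, Remark \ref{rem:Open}). Your proposal does not mention this step, and without it one cannot invoke Theorem \ref{thm:TwoPointCorelation} on the position marginals.
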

As in the classical case we obtain the regularity of the partition
function $f_{N}^{Q}$.
\begin{thm}
\label{Theo-smooth-quantum}For any $\beta_{0}>0$, there exists $\Delta\beta>0$
such that under condition (HQ1-2) with $\epsilon>0$ small enough,
the free energy is $C^{\infty}$ on $[\beta_{0}-\Delta\beta,\beta_{0}+\Delta\beta]$
and for all $k$ we have
\[
\big|\frac{d^{k}}{d\beta^{k}}f_{N}^{Q}\big|\leq M_{k}
\]
with $M_{k}$ independent of $N$.
\end{thm}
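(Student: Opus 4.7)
The plan is to follow exactly the strategy used for Theorem \ref{Theo-Smooth-Jelium-Classical}: express the partition function as a product of positive operators acting on a Banach space, construct a cone for which these operators have uniformly bounded Birkhoff contraction ratio on a neighbourhood of $\beta_0$, and then invoke the abstract smoothness theorem of Section \ref{sec:Smoothness-of-the}, whose conclusion automatically provides $N$-uniform bounds on the derivatives of $f_N^Q$.

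The first step is to rewrite \eqref{eq:FeynmanKac} in a product form. Thanks to the Weyl-chamber ordering $\gamma_1(t)<\gamma_2(t)<\cdots<\gamma_N(t)$, the special algebraic structure of the 1D Coulomb energy still gives a one-body decomposition of the interaction, namely $E(x_1,\dots,x_N)=E(\tilde x_1,\dots,\tilde x_N)+\sum_i U_i(x_i-\tilde x_i)$, so that the Feynman-Kac integrand factorizes as a product over $i$ modulated only by the pairwise ordering constraints $\gamma_i(t)<\gamma_{i+1}(t)$. Integrating the bridges out one by one in this telescoping structure produces
\[
\mathcal{Z}_N^Q(\beta) \;=\; \big\langle a^Q,\; T_0^Q(\beta)\, T_1^Q(\beta)\cdots T_{N-1}^Q(\beta)\, b^Q \big\rangle,
\]
where each $T_i^Q(\beta)$ is a positive integral operator on a space of functions of a single particle's trajectory (or its endpoints), with a strictly positive kernel that depends smoothly on $\beta$ by smoothness of the potentials $U_i$ and of the Brownian-bridge measure in the bridge length.

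The core of the argument is then the construction of a cone $\mathcal{C}^Q$ in a suitable Banach space such that every $T_i^Q(\beta)$ sends $\mathcal{C}^Q$ into itself and such that the image $T_i^Q(\beta)(\mathcal{C}^Q)$ has finite Hilbert diameter, bounded uniformly in $i$, $N$, and $\beta\in[\beta_0-\Delta\beta,\beta_0+\Delta\beta]$; this is exactly the hypothesis of the abstract smoothness theorem. The smallness condition (HQ1-HQ2) is exploited precisely here: at $\epsilon=0$ all $T_i^Q(\beta)$ coincide with the transfer operator of the homogeneous quantum Jellium, which is positivity-improving and admits a natural cone with finite Hilbert diameter by Krein-Rutman; for $\epsilon$ small one treats $T_i^Q(\beta)$ as a perturbation of this homogeneous operator and enlarges the cone slightly so that invariance is preserved while the Birkhoff contraction stays bounded away from one, uniformly in $i$.

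The main obstacle is exactly this perturbative cone construction. Unlike the classical case, where the quadratic lower bound $U_i(s)\geq qms^2$ gives direct pointwise Gaussian control on the kernels, the quantum kernels average over Brownian bridges which can drift far from the equilibrium positions $\tilde x_i$, so a natural cone tailored to the homogeneous operator need not be stable under all perturbed operators simultaneously. Resolving this requires carefully balancing the weights defining the cone against the perturbation size $\epsilon$, using that the bridges concentrate around their endpoints on the relevant time scale $\beta$ in a neighbourhood of $\beta_0$; once this is achieved, the bounds $M_k$ follow immediately from the abstract framework of Section \ref{sec:Smoothness-of-the}, exactly as in the classical case.
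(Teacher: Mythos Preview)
Your overall architecture is right and matches the paper's: write $\mathcal Z_N^Q(\beta)$ as a product of positive transfer operators on a space of path functionals, build a cone for the homogeneous operator using its spectral gap (Krein--Rutman), show stability under the small perturbation governed by (HQ1--HQ2), and then invoke Theorem~\ref{thm:Smooth-1-1}.

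Where your sketch diverges from the paper is in the nature of the cone and of the ``main obstacle''. You anticipate an explicit, weight-based cone and expect to need Brownian-bridge concentration estimates to balance those weights against $\epsilon$. The paper avoids all of this with a purely spectral construction: Theorem~\ref{thm:ConeLargerEigenvector} builds the cone abstractly as the positive hull of $\bigcup_n T^n\big(B(u_0,\epsilon(1-\epsilon_n))\big)$, where $u_0$ is the Perron eigenvector of the homogeneous operator, and then a short stability proposition shows that any compact perturbation $\delta T$ with small norm keeps $T+\delta T$ contracting for the \emph{same} cone (no enlargement is needed). No path-level estimates enter beyond compactness of the homogeneous transfer operator.

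The second simplification you are missing is the verification of the $\mathcal N_i$-smoothness hypothesis of Theorem~\ref{thm:Smooth-1-1}. Because the cone above contains an open ball around $u_0$, the Hilbert-metric norm $\mathcal N$ near any $T(x)$ is two-sided equivalent to the ambient Banach norm. Hence smoothness of $\beta\mapsto T_i(\beta)$ in operator norm immediately yields the required uniform bounds $\|\partial_\beta^k T_i\|_{\mathcal N_i\to\mathcal N_{i+1}}\le C_k$, and Theorem~\ref{thm:Smooth-1-1} concludes. So the step you flag as the main obstacle is in fact handled by a general lemma rather than by model-specific analysis.
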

Finally we can prove analyticity of the free energy with the same
estimate as (\ref{eq:derivateEstime}).
\begin{thm}
\label{Theo-smooth-quantum-1}For any $\beta_{0}>0$, there exists
$\Delta\beta>0$ such that under condition (HQ1-2), for $\epsilon>0$
small enough, if $f_{N_{k}}$ admits a limit $f$ for a subsequence
$N_{k}\rightarrow\infty$, then $f$ is real analytic on $[\beta_{0}-\Delta\beta,\beta_{0}+\Delta\beta]$.\end{thm}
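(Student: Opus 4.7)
The plan is to mirror the proof of the classical Theorem~\ref{Theo-Smooth-Jelium-Classical-1}. Theorem~\ref{Theo-smooth-quantum} already gives $C^{\infty}$ smoothness with uniform-in-$N$ bounds $|d^{k}f_{N}^{Q}/d\beta^{k}|\le M_{k}$; what is needed in addition is the factorial growth $M_{k}\le k!\,c^{k}$, i.e.\ the quantum analogue of (\ref{eq:derivateEstime}). Granted this quantitative strengthening, real analyticity of any subsequential limit $f$ is a soft passage-to-the-limit argument.

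To obtain $M_{k}\le k!\,c^{k}$ I would extend the analysis of Section~\ref{sec:Quantique} to complex values of $\beta$. Fix $\beta_{*}\in[\beta_{0}-\Delta\beta,\beta_{0}+\Delta\beta]$. The transfer operators $T_{i}(\beta)$ whose kernels come from the Feynman--Kac measure of Proposition~\ref{(Feynman-Kac-formula)} are entire in $\beta$, so the entire construction extends holomorphically to a complex disk $|\beta-\beta_{*}|<r$. The Birkhoff--Hopf contraction machinery on the cone built in Section~\ref{sec:Quantique} is, near a strictly positive situation, stable under a small complex perturbation: positivity and the spectral gap degrade only continuously, and (HQ1--HQ2) with $\epsilon$ small provides the required margin. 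This yields a uniform-in-$N$ two-sided estimate $e^{-CN}\le|\mathcal Z_{N}^{Q}(\beta)|\le e^{CN}$ throughout the complex disk, hence a uniform bound $|f_{N}^{Q}(\beta)|\le C'$ there. Cauchy's integral formula then gives
\[
\Big|\frac{d^{k}f_{N}^{Q}}{d\beta^{k}}(\beta_{*})\Big|\le \frac{k!}{r^{k}}\sup_{|\beta-\beta_{*}|=r}|f_{N}^{Q}(\beta)|\le k!\,c^{k},
\]
with $c=C'/r$, uniformly in $N$ and in $\beta_{*}$.

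The passage to the limit is then standard. The uniform bound on the first derivative makes $(f_{N_{k}})$ equicontinuous, so (after a further subsequence) the convergence is uniform on the interval; iterating Arzel\`a--Ascoli on each derivative and diagonalising, we may assume $f_{N_{k}}^{(j)}\to f^{(j)}$ uniformly for every $j$. The limit $f$ therefore satisfies $|f^{(j)}(\beta)|\le j!\,c^{j}$. For any $\beta_{*}\in[\beta_{0}-\Delta\beta,\beta_{0}+\Delta\beta]$ and any $|\beta-\beta_{*}|<1/c$, Taylor's formula with Lagrange remainder gives
\[
\Big|f(\beta)-\sum_{j=0}^{n}\frac{f^{(j)}(\beta_{*})}{j!}(\beta-\beta_{*})^{j}\Big|\le c^{n+1}|\beta-\beta_{*}|^{n+1}\xrightarrow[n\to\infty]{}0,
\]
so the Taylor series of $f$ at $\beta_{*}$ converges to $f$ on a full neighbourhood of $\beta_{*}$, which is real analyticity.

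The main obstacle is precisely the complex extension. Section~\ref{sec:Quantique} is built on the Hilbert metric and the Birkhoff--Hopf theorem for a \emph{real} positive cone, and the non-trivial verification is that the contraction constants, the rank-one approximation, and the lower bound $|\mathcal Z_{N}^{Q}(\beta)|\ge e^{-CN}$ all persist uniformly in $N$ once $\beta$ is given a small imaginary part. The perturbative smallness (HQ1--HQ2) enters twice here: first to construct the cone on the real axis as in Section~\ref{sec:Quantique}, and then to provide the extra spectral margin needed to absorb an imaginary perturbation of size $r>0$. Everything beyond this point is the standard Cauchy-estimate / Taylor-series argument sketched above.
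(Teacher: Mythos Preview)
Your route is genuinely different from the paper's, and the part you yourself flag as the ``main obstacle'' is in fact a real gap.

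The paper never leaves the real line. It applies Theorem~\ref{TheoremAnalytic} directly: one checks that the transfer operators $T_i(\beta)$ satisfy $\|\partial_\beta^{n}T_i\|_{\mathcal N_i\to\mathcal N_{i+1}}/n!\le r^{n}$, and then Theorem~\ref{TheoremAnalytic} (whose proof is the purely real majorant-series iteration just above it) outputs analyticity of the limit with radius at least $(1-\kappa)/r$. In the quantum case the only extra ingredient is the proposition immediately preceding the proof of Theorem~\ref{Theo-smooth-quantum}, which says that for the cone built in Theorem~\ref{thm:ConeLargerEigenvector} the constructed norm $\mathcal N$ is \emph{equivalent} to the ambient Banach-space norm. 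Hence analyticity of $\beta\mapsto T_i(\beta)$ in the ordinary operator norm (with coefficients $\le r^{n}$) transfers for free to the norms $\mathcal N_i$ (with coefficients $\le cr^{n}$), and Theorem~\ref{TheoremAnalytic} applies. No complex $\beta$, no Cauchy estimate, no non-vanishing of $\mathcal Z_N^Q$ off the real axis is ever needed.

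By contrast, your argument hinges on the Birkhoff--Hopf contraction surviving a complex perturbation of $\beta$. That is not a small technicality: the Hilbert metric is defined through the order relation $\alpha x\le y\le\beta x$ on a real cone, and for $\beta\notin\mathbb R$ the kernels of $T_i(\beta)$ are no longer positive, so there is no cone that $T_i(\beta)$ preserves and no contraction ratio $\kappa$ to speak of. One can sometimes rescue this via complex Perron--Frobenius/cone theory or by perturbing the rank-one approximation obtained at $\beta_*\in\mathbb R$, but either route requires a uniform-in-$N$ control of the product of $N$ perturbed operators that you have not supplied; ``positivity and the spectral gap degrade only continuously'' does not give that uniformity. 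Without it you cannot get the lower bound $|\mathcal Z_N^Q(\beta)|\ge e^{-CN}$ off the real axis, and the Cauchy estimate collapses. The paper's real approach sidesteps this entirely; the factorial bound $M_k\le k!\,c^{k}$ you want is exactly what Theorem~\ref{TheoremAnalytic} produces from the real derivative bounds, and your Arzel\`a--Ascoli/Taylor endgame then goes through verbatim.
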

\begin{cor}
We make the same assumptions as in Theorem \ref{Theo-decay-correlation-quantum}.
If the charge background is periodic or if it is constructed randomly
with an ergodic process, there exists a limiting function $f$ such
that $f_{N}\rightarrow f$ (almost surely in the ergodic case) and
$f$ is real analytic.
\end{cor}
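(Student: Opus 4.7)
The plan is to reduce the corollary to Theorem~\ref{Theo-smooth-quantum-1}: once we know that $f_N^Q(\beta)$ admits a pointwise limit $f(\beta)$ (almost surely, in the ergodic case) at every admissible temperature, that theorem immediately implies the limit is real analytic on a neighborhood of each such $\beta_0$. So the real work is to establish pointwise convergence of the sequence $f_N^Q$ itself under the two stated assumptions on the background.

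In the periodic case, suppose $\rho$ has period $p$ and $q_{i+p}=q_i$. The transfer operators $T_i(\beta)$ built in Section~\ref{sec:Quantique} then also satisfy $T_{i+p}(\beta)=T_i(\beta)$, so for $N=kp$ the product entering $\mathcal{Z}_N^Q(\beta)$ telescopes as
\[
\prod_{0\leq i\leq N-1}T_i(\beta)=\bigl(T_{p-1}(\beta)\cdots T_0(\beta)\bigr)^{k}=S(\beta)^{k}.
\]
The operator $S(\beta)$ is compact and strictly positive on the cone constructed in Section~\ref{sec:Quantique}, so iterating the Birkhoff-Hopf contraction (equivalently, applying Krein-Rutman) gives a simple largest eigenvalue $\lambda(\beta)>0$ with a strictly positive eigenvector. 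A standard spectral argument then yields
\[
\tfrac{1}{N}\log\mathcal{Z}_N^Q(\beta)\xrightarrow[N\to\infty]{}\tfrac{1}{p}\log\lambda(\beta),
\]
first along multiples of $p$, and then along the full sequence by interpolation using the uniform bounds of Theorem~\ref{Theo-smooth-quantum}. Hence $f_N^Q\to f$ with $f(\beta)=-(\beta p)^{-1}\log\lambda(\beta)$.

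In the ergodic case, the random sequence $(q_i,\rho|_{[\tilde x_i,\tilde x_{i+1}]})_i$ is stationary and ergodic, so the operators $T_i(\beta)$ form a stationary ergodic sequence of positive operators on the cone. The quantities $X_{m,n}(\beta):=\log\|T_{n-1}(\beta)\cdots T_m(\beta)\|$ satisfy the subadditivity relation $X_{m,n+k}\leq X_{m,n}+X_{n,n+k}$ coming from submultiplicativity of the operator norm, are stationary in the shift, and are integrable thanks to the uniform operator bounds produced in Section~\ref{sec:Quantique}. Kingman's subadditive ergodic theorem then yields an almost sure deterministic limit $\Lambda(\beta)=\lim_N N^{-1}X_{0,N}(\beta)$. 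The rank-one approximation of products of cone-preserving operators established in Section~\ref{sec:Rank-one-operator} ensures that, after dividing by $\|T_{N-1}(\beta)\cdots T_0(\beta)\|$, the specific scalar $\langle a,T_{N-1}(\beta)\cdots T_0(\beta) b\rangle$ stays bounded above and below by fixed positive constants, so $f_N^Q(\beta)\to f(\beta):=-\Lambda(\beta)/\beta$ almost surely.

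The main obstacle lies in the ergodic case: one must check the integrability $\mathbb{E}[|\log\|T_0(\beta)\||]<\infty$ and the uniform positivity that is needed to promote convergence of the operator norm to convergence of the specific matrix element $\langle a,T_{N-1}\cdots T_0 b\rangle$ defining $\mathcal{Z}_N^Q$. Both rely critically on the Hilbert-distance contraction and the uniform cone estimates obtained in Section~\ref{sec:Quantique}; the control of the scalar by the norm is where the Birkhoff-Hopf machinery is really used. Once pointwise convergence is secured in either case, Theorem~\ref{Theo-smooth-quantum-1} applied at each admissible $\beta_0$ closes the argument and yields the stated real analyticity of $f$.
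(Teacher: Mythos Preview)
Your proposal is correct and supplies exactly the standard details the paper leaves implicit: the paper states this corollary without proof, treating convergence as routine and analyticity as an immediate consequence of Theorem~\ref{Theo-smooth-quantum-1}. Your periodic argument via the spectral gap of $S(\beta)=T_{p-1}\cdots T_0$ and your ergodic argument via Kingman's subadditive theorem on $\log\|T_{n-1}\cdots T_m\|$, together with the rank-one approximation of Section~\ref{sec:Rank-one-operator} to pass from the norm to the scalar $\langle a,\prod T_i b\rangle$, are precisely the expected mechanisms; the integrability hypothesis for Kingman is harmless here since (HQ1--2) force $\|T_0(\beta)\|$ to lie in a fixed compact interval of $(0,\infty)$.
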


\section{General theory to apply the Birkhoff-Hopf theorem}

\label{sec:BHT}The Birkhoff-Hopf theorem has been used for instance
to study non linear integrable equations, weak ergodic theorems, or
the so-called $DAD$ problem \cite{borwein1994entropy}.

We first introduce the notion of cone and the Hilbert distance. In
this set up we can state the Birkhoff-Hopf theorem. Then we prove
Theorem \ref{Theo-decay-correlation-Jelium-Classic} and Theorem \ref{Theo-Smooth-Jelium-Classical}
with the extra assumption of strictly contracting operators.

\subsection{Framework and Birkhoff-Hopf theorem. \label{sub:Framework-and-Birhoff}}

We follow \cite{eveson1995elementary} for the notation and we refer
to this paper for a proof of the Birkhoff Hopf theorem (Theorem \ref{thm:(Birkhoff-Hopf-)-}
bellow). Let $E$ be a real linear Banach space.
\begin{defn}
(Abstract cone) $\mathcal{C}\subset E$ is called a \emph{cone} if
\begin{enumerate}
\item $\mathcal{C}$ is convex,
\item $\lambda\mathcal{C}\subset\mathcal{C}$ for any $\lambda\geq0$,
\item $\mathcal{C}\cap-\mathcal{C}=\text{\{}0\text{\} }$.
\end{enumerate}
\end{defn}
Using $\mathcal{C}$ we define a partial order on $E$
\begin{defn}
(Partial order) For any $x,y\in E$, we write $x\leq_{\mathcal{C}}y$
if $y-x\in\mathcal{C}$.
\end{defn}
For clarity we will use $\leq$ instead of $\leq_{\mathcal{C}}$ if
there is no confusion about the cone.
\begin{defn}
If $\mathcal{C}$ is a cone, we define the dual cone $\mathcal{C}^{*}$
by

\[
\mathcal{C}^{*}:=\text{\{}f\in E^{*}:\forall x\in\mathcal{C},(f,x)\geq0\text{\}}.
\]

\end{defn}
The set $\mathcal{C}^{*}$ is a cone if $\mathcal{C}-\mathcal{C}$
is dense and in particular if $\mathcal{C}$ has nonempty interior.
We say that $x,y\in\mathcal{C}$ are \emph{comparable and write} $x\text{\ensuremath{\sim}}y$
if there exist $\alpha,\beta>0$ such that $\alpha x\leq y\leq\beta x$.
This defines an equivalence relation. We say that $\mathcal{C}$ is
\emph{normal }if there exists $\gamma>0$ such that 
\[
\forall x,y\in\mathcal{C},0\leq x\leq y\Rightarrow\|x\|\leq\gamma\|y\|.
\]

\begin{defn}
For any $x,y\in\mathcal{C}$ comparable, we define the Hilbert metric
by

\[
d_{\mathcal{C}}(x,y)=\log\frac{\beta_{\min}(x,y)}{\alpha_{\max}(x,y)}
\]
where

\[
\alpha_{\max}(x,y)=\sup\text{\{}\alpha>0:\alpha x\leq y\text{\}}
\]
 and

\[
\beta_{\min}(x,y)=\inf\text{\{}\beta>0:y\leq\beta x\text{\}}
\]

\end{defn}
The Hilbert metric is a metric on the projective space of $\mathcal{C}$.

We say that $T:E\rightarrow E$ is \emph{order-preserving }if $x\leq y\Rightarrow T(x)\leq T(y)$.
If $T$ is a linear operator (the only case we will consider here)
this is equivalent to $T(\mathcal{C})\subset\mathcal{C}$. 
\begin{rem}
If $T$ is order-preserving then $T$ is non-expanding for the Hilbert
metric. Indeed $\alpha x\leq y\leq\beta x$ implies $\alpha T(x)\leq T(y)\leq\beta T(x)$.

We introduce the projective diameter

\[
\Delta(T)=\sup\text{\{}d_{\mathcal{C}}(T(x),T(y)):x,y\in\mathcal{C},\quad T(x)\sim T(y)\text{\}}
\]
 and the contracting ratio
\[
\kappa(T)=\inf\text{\{}c>0:\forall x,y\text{ }d_{\mathcal{C}}(T(x),T(y))\leq cd_{\mathcal{C}}(x,y),\quad T(x)\sim T(y)\text{\}}
\]

\end{rem}
Here is the main theorem we will use : 
\begin{thm}
(Birkhoff-Hopf \cite{birkhoff1957extensions,hopf1963inequality})
If $T$ is order-preserving then \label{thm:(Birkhoff-Hopf-)-}

\[
\kappa(T)=\tanh\left(\frac{\Delta(T)}{4}\right).
\]

\end{thm}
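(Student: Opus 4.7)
My plan is to follow the classical projective-geometry proof of Birkhoff--Hopf, reducing the statement to a one-dimensional M\"obius-type computation on the real projective line.

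If $\Delta(T)=+\infty$ then $\tanh(\Delta(T)/4)=1$ and the conclusion $\kappa(T)\leq 1$ is exactly the non-expansion property noted just before the statement: from $\alpha x\leq y\leq\beta x$ and order-preservation one obtains $\alpha T(x)\leq T(y)\leq\beta T(x)$. So I may assume $\Delta(T)<\infty$ and fix comparable $x,y\in\mathcal{C}$ with $T(x)\sim T(y)$. I would set $\alpha=\alpha_{\max}(x,y)$ and $\beta=\beta_{\min}(x,y)$ and decompose
$$y=\alpha x+u=\beta x-v,\qquad u,v\in\mathcal{C},\qquad u+v=(\beta-\alpha)x.$$
Applying $T$ preserves these relations with $T(u),T(v)\in\mathcal{C}$, and a short computation yields
$$\alpha_{\max}(T(x),T(y))\geq \alpha+\alpha_{\max}(T(x),T(u)),\qquad \beta_{\min}(T(x),T(y))\leq \beta-\alpha_{\max}(T(x),T(v)),$$
so the Hilbert distance $d_{\mathcal{C}}(T(x),T(y))$ is strictly improved by two ``correction'' terms which are in turn controlled by $d_{\mathcal{C}}(T(x),T(u))$ and $d_{\mathcal{C}}(T(x),T(v))$, both bounded by the projective diameter $\Delta(T)$ (this is the essential place where the hypothesis $T(x)\sim T(y)$ is used, since it forces $T(u), T(v)$ to be comparable to $T(x)$).

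At this point the four vectors $T(x),T(y),T(u),T(v)$ all lie in the two-dimensional plane $\mathrm{span}(T(x),T(y))$, which meets $\mathcal{C}$ in an angular sector, so the estimate becomes a statement about an interval on the real projective line: given four points on the line constrained so that two of their mutual Hilbert distances are at most $D=\Delta(T)$, what is the worst-case ratio of the new and old Hilbert distances? Sending the two extreme rays of the sector to $0$ and $+\infty$ by a projective change of coordinates turns the Hilbert metric into $d(s,t)=|\log(t/s)|$, and a direct manipulation using the half-angle identity $\tanh(x/2)=(e^{x}-1)/(e^{x}+1)$ produces
$$\frac{d_{\mathcal{C}}(T(x),T(y))}{d_{\mathcal{C}}(x,y)}\leq \tanh\!\Big(\frac{\Delta(T)}{4}\Big).$$
Taking the supremum over admissible $x,y$ yields $\kappa(T)\leq \tanh(\Delta(T)/4)$.

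The main obstacle is this explicit one-dimensional optimization combined with the matching lower bound $\kappa(T)\geq \tanh(\Delta(T)/4)$, for which I would exhibit a sequence of pairs $(x_n,y_n)$ whose images saturate all the inequalities above simultaneously; the extremal configuration is a M\"obius transformation whose image interval has hyperbolic length exactly $\Delta(T)$ inside the invariant interval, and such a transformation contracts the ambient hyperbolic metric by precisely the factor $\tanh(\Delta(T)/4)$. This matching lower bound is what forces equality in the theorem rather than only an inequality.
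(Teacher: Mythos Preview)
The paper does not actually contain a proof of this theorem. Immediately before stating it, the author writes ``We follow \cite{eveson1995elementary} for the notation and we refer to this paper for a proof of the Birkhoff Hopf theorem (Theorem~\ref{thm:(Birkhoff-Hopf-)-} below).'' The result is quoted from the literature (Birkhoff, Hopf, with the elementary exposition of Eveson--Nussbaum) and used as a black box throughout the paper; there is nothing in the paper's body to compare your argument against.

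That said, your outline is the standard projective-geometry proof and is correct in its architecture. The decomposition $y=\alpha x+u=\beta x-v$ with $u,v\in\mathcal{C}$, the deduced inequalities
\[
\alpha_{\max}(T(x),T(y))\geq \alpha+\alpha_{\max}(T(x),T(u)),\qquad
\beta_{\min}(T(x),T(y))\leq \beta-\alpha_{\max}(T(x),T(v)),
\]
and the reduction to a two-dimensional section of the cone are exactly how the classical argument proceeds. Two places where your sketch would need to be filled in before it counts as a proof: first, the phrase ``a direct manipulation using the half-angle identity \ldots\ produces'' hides the entire computation (one has to optimise the cross-ratio subject to the constraint that the image interval has hyperbolic length at most $\Delta(T)$, and this optimisation is where $\tanh(\Delta(T)/4)$ actually appears); second, the lower bound is only asserted, not constructed, and the extremal M\"obius map needs to be exhibited explicitly together with a sequence of vectors in the cone realising it. Both steps are routine but non-trivial, and the references the paper cites carry them out in full.
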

The result has to be understood as follows: if the image of the cone
of the order preserving operator is strictly inside the cone ($\Delta(T)<\infty$),
then the operator is strictly contracting $(\kappa(T)<1)$ for the
Hilbert metric.

\subsection{Application to statistical physics}

We now use the previous formalism to study the partition function
and the marginals from statistical physics in the abstract framework
of positive operators.
\begin{defn}
(Density function) Let $u\in\mathcal{C}^{*}$ and $v\in\mathcal{C}$,
and let $X,Y,(X_{i})_{i}$ be positive bounded operators. We define
\begin{itemize}
\item the partition function by 
\[
\mathcal{Z}=(u,T_{N}\text{\dots}T_{0}v),
\]

\item the one-point density function by 
\[
\rho_{K_{1}}(X)=\frac{1}{\text{\ensuremath{\mathcal{Z}}}}(u,T_{N}\text{\dots}T_{K_{1}+1}XT_{K_{1}}\text{\dots}T_{0}v),
\]

\item the pair correlation function by 
\[
\rho_{K_{2},K_{1}}(Y,X)=\frac{1}{\mathcal{Z}}(u,T_{N}\text{\dots}T_{K_{2}+1}YT_{K_{2}}\text{\dots}T_{K_{1}+1}XT_{K_{1}}\text{\dots}T_{0}v),
\]

\item the $k$-point correlation function by
\[
\rho_{K_{k},...,K_{2},K_{1}}(X_{k},...,X_{1})=\frac{1}{\text{\ensuremath{\mathcal{Z}}}}(u,T_{N}\text{\dots}T_{K_{k}+1}X_{k}T_{K_{k}}\text{\dots}T_{K_{1}+1}X_{1}T_{K_{1}}\text{\dots}T_{0}v).
\]

\end{itemize}
\end{defn}
The operators $X,Y,(X_{i})$ should be thought of as test functions
acting on the position of the $K_{i}^{th}$ particle. 
\begin{rem}
The simplest model that can be written in this formalism is the one-dimensional
Ising model \cite{pfeuty1970one}. All the results stated above for
Jellium can be easily adapted to the inhomogeneous one-dimensional
Ising model.

We also think of Markov processes on a finite or compact set, in which
case $T_{i}$ is the transitive kernel from $X_{i}$ to $X_{i+1}$.
\end{rem}

\subsubsection{Decay of correlations}

The following theorem states the exponential decay of the correlation
functions.
\begin{thm}
(Decay of correlations) Let $(T_{i})_{i=1,\text{\dots},N}$ be positive
operators such that 
\[
\Delta(T_{i}(C))\le M<\infty
\]
 for any $i$. Then there exist $c>0$ which depends only on $k$,
such that for $\min|K_{j+1}-K_{j}|$ large enough, we have

\label{theo-decay-of-correlation}
\begin{align*}
 & \big(1-c\kappa^{\min_{j}|K_{j+1}-K_{j}|}\big)\prod_{i=1}^{k}\rho_{K_{i}}(X_{i})\\
 & \qquad\leq\rho_{K_{k},...,K_{1}}(X_{k},...,X_{1})\leq\big(1+c\kappa^{\min_{j}|K_{j+1}-K_{j}|}\big)\prod_{i=1}^{k}\rho_{K_{i}}(X_{i})
\end{align*}
 with $\kappa=\tanh(\frac{M}{4})$.
\end{thm}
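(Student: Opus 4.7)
The plan is to combine the Birkhoff--Hopf theorem with a rank-one approximation for long products of positive operators. By hypothesis $\Delta(T_i)\le M$ for each $i$, so Theorem \ref{thm:(Birkhoff-Hopf-)-} gives $\kappa(T_i)\le \tanh(M/4)=:\kappa<1$. Chaining the resulting contractions yields
\[
d_{\mathcal{C}}\bigl(T_{i_n}\cdots T_{i_1} x,\; T_{i_n}\cdots T_{i_1} y\bigr)\le M\kappa^{n-1}
\]
for all $x,y\in\mathcal{C}$: the first operator brings the image inside a set of Hilbert diameter $M$, and each subsequent one contracts by $\kappa$. Geometrically, a product of $n$ operators maps the cone into an exponentially thin sub-cone, so it acts essentially as a rank-one operator $a\otimes b^{*}$ up to a relative error of order $\kappa^{n}$.

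The next step is to make this quantitative: if $d_{\mathcal{C}}(x,y)\le\epsilon$ one may pick representatives with $x\le y\le e^{\epsilon}x$, which by normality of $\mathcal{C}$ converts the Hilbert bound into a norm bound $\|y-x\|\le\gamma(e^{\epsilon}-1)\|x\|$. Applied to each segment $S_j:=T_{K_{j+1}}\cdots T_{K_j+1}$ of length $n_j:=K_{j+1}-K_j$ between successive insertion points, this produces $a_j\in\mathcal{C}$ and $b_j^{*}\in\mathcal{C}^{*}$ such that $S_j z=(b_j^{*},z)\,a_j\bigl(1+O(\kappa^{n_j})\bigr)$ uniformly in $z\in\mathcal{C}$.

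These rank-one replacements are then inserted into the algebraic identities
\[
\rho_{K_j}(X_j)\,\mathcal{Z}=(\psi_{K_j+1},X_j\phi_{K_j+1}),
\]
\[
\rho_{K_k,\dots,K_1}(X_k,\dots,X_1)\,\mathcal{Z}=(\psi_{K_k+1},X_k S_{k-1}X_{k-1}\cdots S_1 X_1 \phi_{K_1+1}),
\]
where $\phi_i=T_{i-1}\cdots T_0 v\in\mathcal{C}$ and $\psi_i=T_i^{*}\cdots T_N^{*}u\in\mathcal{C}^{*}$. Replacing each middle $S_j$ by $a_j\otimes b_j^{*}$ turns the bilinear form into a telescoping product of scalars, and the same rank-one directions (applied to the initial product that defines $\phi_{K_{j+1}}$ and the final product that defines $\psi_{K_j+1}$) identify each factor with $\rho_{K_j}(X_j)\mathcal{Z}$ up to relative error $O(\kappa^{n_j})$.

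The principal obstacle is bookkeeping the errors so that they combine into the single factor $1+c\kappa^{\min_j n_j}$. Each substitution has to be measured relative to the positive flux through its segment, so that the $k-1$ multiplicative errors compose as $\prod_j(1+c_j\kappa^{n_j})=1+O\bigl(\kappa^{\min_j n_j}\bigr)$ with a constant depending only on $k$. A secondary subtlety is that the test operators $X_i$ need not preserve $\mathcal{C}$, so the rank-one comparison must be phrased as an inequality on the final scalar bilinear form rather than on intermediate cone-valued vectors; the normality constant of $\mathcal{C}$, together with the choice of representative used to pass from Hilbert distance to norm distance, enters the final constant $c$.
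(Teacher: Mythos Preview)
Your strategy---approximate each segment $S_j=T_{K_{j+1}}\cdots T_{K_j+1}$ by a rank-one operator and then decouple the $k$-point function algebraically---is exactly the paper's. However, two steps in your execution are genuine gaps rather than mere bookkeeping.

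First, the passage through norms (``by normality of $\mathcal{C}$ converts the Hilbert bound into a norm bound'') does not work as stated: a norm error $\|S_j z-(b_j^{*},z)a_j\|\le C\kappa^{n_j}(b_j^{*},z)\|a_j\|$ fails to propagate multiplicatively through the remaining operators and the final pairing, because nothing in the hypotheses controls the ratio $\|u\|\,\|T_N\cdots X_{j+1}\|_{\mathrm{op}}\,\|a_j\|\big/\bigl(u,T_N\cdots X_{j+1}a_j\bigr)$. The paper sidesteps this by never leaving the order relation: it introduces the Hilbert metric $d_{\mathcal{P}}$ on the cone of positive \emph{operators}, proves $d_{\mathcal{P}}(AB,CD)\le d_{\mathcal{P}}(A,C)+d_{\mathcal{P}}(B,D)$ (Lemma~\ref{prop:distanceProduit}), and states the rank-one approximation as $d_{\mathcal{P}}(S_j,\,a_j\otimes b_j^{*})\le 2M\kappa^{n_j-1}$. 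Then inserting any positive $X_i$ between approximated blocks costs nothing, and $e^{-\epsilon}A\le_{\mathcal{P}}B\le_{\mathcal{P}}e^{\epsilon}A$ immediately gives $e^{-\epsilon}(u,Av)\le(u,Bv)\le e^{\epsilon}(u,Av)$ for the scalars. This also dissolves your ``secondary subtlety'': the $X_i$ are positive by hypothesis (see the definition of the density functions), so they preserve the order for free. Second, you assert the existence of a \emph{linear} $b_j^{*}\in\mathcal{C}^{*}$ realizing $S_jz\approx(b_j^{*},z)a_j$, but the natural candidate $z\mapsto\alpha_{\max}(a_j,S_jz)$ is only positively homogeneous and superadditive, not linear; the paper supplies this missing step via a Hahn--Banach separation argument (Lemma~\ref{lem:ConsLinear} and its corollary).
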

The decay of correlations is an important concept in statistical physics
and it is ubiquitous in one-dimensional systems \cite{ruelle1999statistical}.

The proof of Theorem \ref{theo-decay-of-correlation} is provided
below in Section \ref{sec:Decay-of-correlation}.

\subsubsection{Regularity of the free energy}

The second theorem states that the partition function depending on
a parameter is smooth, if the transitive operators are smooth enough.
In order to express the ``regularity'' of the operator in the framework
of a cone and the Hilbert distance, we have to construct the following
norm. The following result says that the distance is close to being
a norm in the neighborhood of any point $x_{0}$. 
\begin{prop}
\label{prop:NormEqui-1}Let $x_{0}\in\mathcal{C}$. For any $\epsilon>0$,
there exists $r>0$, a function $f$ and a norm $\mathcal{N}$ defined
on the projective space, such that $d$ can be written as follows 

\[
d(x,y)=f(x,y)\mbox{\ensuremath{\mathcal{N}}}(y-x)
\]
for all $x,y$ such that $d(x,x_{0})<r$ and $d(y,x_{0})<r,$ with
$|f(x,y)-1|<\epsilon$.
\end{prop}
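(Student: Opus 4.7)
The plan is to identify $\mathcal{N}$ with the infinitesimal Finsler norm of the Hilbert metric at $x_0$, and then show, via a multiplicative Taylor expansion, that on a small Hilbert ball around $x_0$ the metric $d$ is $(1\pm\epsilon)$-close to $\mathcal{N}$. First, fix a positive functional $u\in\mathcal{C}^*$ with $(u,x_0)=1$ (such a $u$ exists since $\mathcal{C}$ is assumed to have nonempty interior, a prerequisite for the Hilbert metric to be finite near $x_0$), and normalize representatives by $(u,x)=(u,y)=(u,x_0)=1$; this affects neither $d$ nor the projective class. Let $B=\{\phi\in\mathcal{C}^*:(\phi,x_0)=1\}$ and set
\[
\mu_+(h)=\sup_{\phi\in B}(\phi,h)=\inf\{t:tx_0-h\in\mathcal{C}\},\qquad \mu_-(h)=\inf_{\phi\in B}(\phi,h)=\sup\{t:h-tx_0\in\mathcal{C}\},
\]
the two descriptions being equivalent by the bipolar theorem. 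The functional $\mathcal{N}(h):=\mu_+(h)-\mu_-(h)$ is a seminorm on $E$ whose kernel is exactly $\mathbb{R}x_0$ (using $\mathcal{C}\cap(-\mathcal{C})=\{0\}$), and therefore descends to a genuine norm on the quotient $E/\mathbb{R}x_0$, and hence on the tangent hyperplane $\ker u$ to the projective space at $[x_0]$.

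Next, I would combine the dual representation
\[
d(x,y)=\sup_{\phi\in B}\log\frac{(\phi,y)}{(\phi,x)}-\inf_{\phi\in B}\log\frac{(\phi,y)}{(\phi,x)},
\]
which is immediate from $\alpha_{\max}(x,y)=\inf_\phi (\phi,y)/(\phi,x)$ and $\beta_{\min}(x,y)=\sup_\phi (\phi,y)/(\phi,x)$, with the following size control: the assumption $d(x,x_0)<r$ together with the $u$-normalization forces $(\phi,x),(\phi,y)\in[e^{-r},e^r]$ for every $\phi\in B$, so the quantities $s_x(\phi):=(\phi,x)-1$ and $s_y(\phi):=(\phi,y)-1$ are uniformly of size $O(r)$.

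The decisive step is a multiplicative expansion of the logarithm. Applying the algebraic identity $s_y^k-s_x^k=(s_y-s_x)\sum_{j=0}^{k-1}s_y^{k-1-j}s_x^j$ termwise to the series $\log(1+s_y)-\log(1+s_x)=\sum_{k\ge 1}(-1)^{k+1}(s_y^k-s_x^k)/k$ yields
\[
\log\frac{(\phi,y)}{(\phi,x)}=(\phi,y-x)\,g_\phi,\qquad |g_\phi-1|\le Cr,
\]
uniformly in $\phi\in B$. Because $(u,y-x)=0$ and $u\in B$, we have $\mu_+(y-x)\ge 0\ge\mu_-(y-x)$, which lets one pull the factor $g_\phi$ in and out of the sup/inf with controlled signs and produces
\[
(1-Cr)\,\mathcal{N}(y-x)\le d(x,y)\le(1+Cr)\,\mathcal{N}(y-x).
\]
Choosing $r$ small enough depending on $\epsilon$ yields $f(x,y):=d(x,y)/\mathcal{N}(y-x)$ satisfying $|f(x,y)-1|<\epsilon$. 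The main obstacle is precisely this factoring step: a routine Taylor expansion leaves an additive $O(r^2)$ remainder which ruins the \emph{multiplicative} estimate whenever $\mathcal{N}(y-x)$ happens to be much smaller than $r$, and one must use the polynomial identity $s_y^k-s_x^k=(s_y-s_x)\cdot(\cdots)$ at every order to keep the error itself proportional to $(\phi,y-x)$.
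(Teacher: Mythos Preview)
Your argument is correct. The norm you build, $\mathcal{N}(h)=\mu_+(h)-\mu_-(h)$, is in fact the same norm the paper constructs: the paper's set $B$ is exactly $\{s:\mathcal{N}(s)\le r\}$, so its Minkowski functional is $\mathcal{N}$. Where the two proofs differ is in how they reach the estimate $d(x,y)=(1+O(r))\mathcal{N}(y-x)$. The paper stays on the \emph{primal} side: it first proves $d(x_0,x_0+s)=(1+O(r))\|s\|$ by manipulating the order inequalities $\alpha x_0\le x_0+s\le\beta x_0$ directly, and then establishes approximate translation invariance $d(x_0+s_1,x_0+s_2)=(1+O(r))\,d(x_0,x_0+s_2-s_1)$ to pass to general pairs. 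You instead work on the \emph{dual} side: the functional formula $d(x,y)=\sup_{\phi}\log\frac{(\phi,y)}{(\phi,x)}-\inf_{\phi}\log\frac{(\phi,y)}{(\phi,x)}$ together with the factorization $\log(1+s_y)-\log(1+s_x)=(s_y-s_x)\cdot g$ handles both steps at once, and your observation that $\mu_+(y-x)\ge 0\ge\mu_-(y-x)$ is what lets you pull the factor $g_\phi\in[1-Cr,1+Cr]$ past the sup and inf with the correct signs. Your route is arguably cleaner and makes the constant explicit; its only extra cost is that the bipolar identification $\alpha_{\max}(x,y)=\inf_\phi(\phi,y)/(\phi,x)$ presupposes a closed cone (or that one silently replaces $\mathcal{C}$ by $\bar{\mathcal{C}}$), whereas the paper's order-theoretic argument does not. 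In the applications that follow (cones of nonnegative functions) this is harmless.
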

We can now state our second main result
\begin{thm}
\label{thm:Smooth-1-1}Let $\mathcal{C}$ be a cone and let $T_{i}(\beta)$
be a family of smooth bounded operators for $\beta$ in the neighborhood
$[\beta_{0}-\delta,\beta_{0}+\delta]$ of $\beta_{0}$, which are
contracting of parameter $\kappa<1$, uniformly in $\beta$ and $i$.
For all $i$, we denote by $\mathcal{N}_{i}$ the norms defined in
Proposition \ref{prop:NormEqui-1} around $x_{i}=\prod_{j=0}^{i-1}T_{j}(\beta)b$.
Assume that the derivatives in $\beta$ of the operator are uniformly
bounded for theses norm, that is, 
\[
\exists C',\forall i,\quad\left\Vert \frac{d^{k}T_{i}}{d\beta^{k}}\right\Vert _{\mathcal{N}_{i}\rightarrow\mathcal{N}_{i+1}}\leq C_{k}'
\]
for some constant $C_{k}'$ independent of $i$ and of $\beta\in[\beta_{0}-\delta,\beta_{0}+\delta]$.
Then 

\[
f_{N}(\beta)=\frac{1}{N}\log\left\langle a,\prod_{i=0}^{N-1}T_{i}(\beta)b\right\rangle 
\]
 is uniformly smooth, meaning there is a constant $C$ which depends
only on $\kappa$ and $(M_{k})$ such that : 

\[
\Big|\frac{d^{n}f_{N}(\beta)}{d\beta^{n}}\Big|\leq C(\kappa,(M_{k})_{k\leq n})
\]
 where $M_{k}=\sup_{i,\beta\in[\beta_{0}-\delta,\beta_{0}-\delta]}\|\frac{d^{k}T_{i}}{d\beta^{k}}\|_{\mathcal{N}_{i}\rightarrow\mathcal{N}_{i+1}}$
. 

Moreover if the following limit exists 

\[
f(\beta)=\lim_{k\rightarrow\infty}\frac{1}{N_{k}}\log\left\langle a,\prod_{i=0}^{N_{k}-1}T_{i}(\beta)b\right\rangle 
\]
 for a sequence $N_{k}\rightarrow\infty$, then it is smooth in a
neighborhood of $\beta_{0}$: 

\[
\left|\frac{d^{n}f(\beta)}{d\beta^{n}}\right|\leq C(\kappa,(M_{k})_{k\leq n}).
\]

\end{thm}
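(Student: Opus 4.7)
The plan is to express $\log Z_N(\beta):=\log\langle a,\prod_iT_i(\beta)b\rangle$ as a telescoping sum of $N$ local contributions whose $\beta$-derivatives are bounded uniformly in $N$, so that $f_N=(1/N)\log Z_N$ and all its $\beta$-derivatives remain $O(1)$. Writing $x_j(\beta)=T_{j-1}(\beta)\cdots T_0(\beta)b$ and $\mu_j(\beta)=\langle a,T_j(\beta)x_j(\beta)\rangle/\langle a,x_j(\beta)\rangle$, one obtains the telescoping identity $\log Z_N=\log\langle a,b\rangle+\sum_{j=0}^{N-1}\log\mu_j$. Since $\mu_j$ is homogeneous of degree zero in $x_j$, it depends on $x_j$ only through its projective class $\hat{x}_j(\beta)$, so the problem reduces to controlling, for each $k$, the $k$-th $\beta$-derivative of the two ingredients $\hat{x}_j(\beta)$ and $T_j(\beta)$.

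The heart of the proof is the uniform control of the projective trajectory $\hat{x}_j(\beta)$. The projectivised map $\hat{x}\mapsto\widehat{T_j(\beta)\hat{x}}$ is $\kappa$-contracting in the Hilbert distance, and by Proposition \ref{prop:NormEqui-1} this distance coincides, in a neighbourhood of the reference trajectory $x_j=x_j(\beta_0)$, with the norm $\mathcal{N}_j$ up to a factor $1\pm\epsilon$. Differentiating the recursion $\hat{x}_{j+1}=\widehat{T_j(\beta)\hat{x}_j}$ in $\beta$ yields a linear recurrence $d\hat{x}_{j+1}/d\beta=A_j(d\hat{x}_j/d\beta)+\xi_j$, where $A_j:\mathcal{N}_j\to\mathcal{N}_{j+1}$ has operator norm at most some $\kappa'<1$ (taking $\epsilon$ small enough) and $\|\xi_j\|_{\mathcal{N}_{j+1}}\leq M_1$ by hypothesis. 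A geometric-series estimate gives a uniform bound $\|d\hat{x}_j/d\beta\|_{\mathcal{N}_j}\leq\tilde C_1$. Iterating, differentiating $k$ times, and expanding with Fa\`a di Bruno together with the bounds $M_1,\ldots,M_k$ produces, by induction on $k$, a uniform bound $\|d^k\hat{x}_j/d\beta^k\|_{\mathcal{N}_j}\leq\tilde C_k(\kappa,M_1,\ldots,M_k)$.

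Since $\log\mu_j$ depends on $\beta$ only through $T_j(\beta)$ and $\hat{x}_j(\beta)$, one more application of the chain rule combined with the bounds $M_k$ yields $|d^k\log\mu_j/d\beta^k|\leq C_k$ uniformly in $j$ and in $\beta\in[\beta_0-\delta',\beta_0+\delta']$ for a possibly smaller $\delta'$. Summing over $j$ and dividing by $N$ gives the sought estimate $|f_N^{(k)}(\beta)|\leq C_k+O(1/N)$. For the limit statement, the uniform bound on $f_N^{(k+1)}$ implies equicontinuity of $f_N^{(k)}$, so by Arzel\`a--Ascoli the pointwise convergence $f_{N_k}\to f$ upgrades to $C^\infty$ convergence on compact sub-intervals, and $f$ inherits the same bounds.

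The main obstacle will be the inductive step controlling the higher-order derivatives of $\hat{x}_j$ in the family of norms $\mathcal{N}_j$: the norms are index-dependent, so the Fa\`a di Bruno expansions must be organised so that the factors $1\pm\epsilon$ introduced by Proposition \ref{prop:NormEqui-1} do not accumulate along the trajectory. This is handled by absorbing these factors once and for all into the strict contraction $\kappa'<1$ and choosing $\delta'$ small enough that $\hat{x}_j(\beta)$ remains inside the Hilbert-ball where the norm equivalence of Proposition \ref{prop:NormEqui-1} is valid, for every $j$ and every $\beta\in[\beta_0-\delta',\beta_0+\delta']$.
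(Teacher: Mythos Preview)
Your proposal is correct and follows essentially the same route as the paper: a telescoping decomposition of $\log Z_N$ into $N$ local terms, reduction to controlling the projective trajectory $\hat x_j(\beta)$, and an inductive bound on its $\beta$-derivatives via the linear recurrence obtained by differentiating the iteration, using the $\kappa$-contraction of the projectivised map in the norms $\mathcal N_j$ (this is the paper's Lemma~\ref{LemCompositionFonctions}). The only cosmetic difference is the choice of normalisation in the telescoping sum: the paper normalises by the Banach norm and writes $f_N=\frac1N\log\langle a,u_N\rangle+\frac1N\sum_i\log\|T_i(\beta)u_i(\beta)\|$ with $u_i=\prod_{j<i}T_jb/\|\prod_{j<i}T_jb\|$, whereas you normalise by $\langle a,\cdot\rangle$ and telescope with $\mu_j=\langle a,T_jx_j\rangle/\langle a,x_j\rangle$; both give the same inductive structure and the same constants $C_k\sim \tilde C_k/(1-\kappa)$.
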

If the positive operator appears to be uniformly analytic for the
constructed norm then the free energy is analytic. More precisely
we have the following theorem
\begin{thm}
With the same assumptions as in Theorem \ref{thm:Smooth-1-1}\textup{,}
\textup{if there exists $r\geq0$ such that} 
\[
\frac{\|\partial_{\beta}^{n}T_{i}\|_{\mathcal{N}_{i}\rightarrow\mathcal{N}_{i+1}}}{n\text{!}}\leq r^{n}
\]
 for all $n$, then $f$ is real analytic around $\beta$ with radius
of convergence at least equal to $(1-\kappa)/r$\label{TheoremAnalytic}.
\end{thm}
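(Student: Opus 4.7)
The strategy is to revisit the proof of Theorem \ref{thm:Smooth-1-1} and track precisely how the bound $C(\kappa,(M_k)_{k\leq n})$ depends on the data $M_k$, then specialize to the geometric growth hypothesis $M_k \le k!\,r^k$ and recognize the resulting estimate as a Cauchy-type bound for real analyticity.

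First, I would recall that $n!/(d^n f/d\beta^n)$ at $\beta_0$ is precisely the Taylor coefficient whose growth controls the radius of convergence. So the target is the inequality
\[
\bigl|\partial_\beta^n f(\beta)\bigr| \le n!\,\Bigl(\frac{r}{1-\kappa}\Bigr)^n,
\]
up to multiplicative constants independent of $n$, which forces the radius of convergence to be at least $(1-\kappa)/r$. Next, I would open the proof of Theorem \ref{thm:Smooth-1-1} and isolate the combinatorial expression that produced $C(\kappa,(M_k)_{k\leq n})$. Applying $\partial_\beta^n$ to $\log\langle a,\prod T_i(\beta) b\rangle$ produces, via Faà di Bruno and Leibniz, a sum over ordered partitions of the $n$ derivatives into blocks, one block per ``marked'' index $i$; each block contributes a factor $\|\partial_\beta^{k_j} T_{i_j}\|_{\mathcal N_{i_j}\to \mathcal N_{i_j+1}} \le M_{k_j}$, while the assembly of the blocks along the chain produces, through Proposition \ref{prop:NormEqui-1} and the Birkhoff--Hopf contraction with ratio $\kappa<1$, a geometric damping factor $\kappa^{|i_{j+1}-i_j|}$ between successive marked indices. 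Dividing by $N$ and summing the resulting geometric series in the positions of the marks replaces each such sum by a finite constant $(1-\kappa)^{-1}$, independent of $N$.

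Substituting the hypothesis $M_{k_j} \le k_j!\,r^{k_j}$, the contribution of a partition of $n$ into $p$ blocks of sizes $k_1+\cdots+k_p = n$ becomes
\[
\Bigl(\prod_{j=1}^p k_j!\,r^{k_j}\Bigr)\cdot\frac{1}{(1-\kappa)^{p-1}} = r^n\,\frac{\prod_j k_j!}{(1-\kappa)^{p-1}}.
\]
The total derivative is then bounded by $n!$ times the sum over compositions of $n$ of $(1-\kappa)^{-(p-1)}\prod_j k_j!/n!$, a standard combinatorial sum that one controls by induction on $n$ (or by computing the generating function $\sum_n x^n/(1-x/(1-\kappa))$), giving a bound of the form $n!\,(r/(1-\kappa))^n$ up to a harmless prefactor. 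This is exactly the Cauchy estimate that forces analyticity with radius at least $(1-\kappa)/r$, and the passage to the limit $f=\lim f_{N_k}$ is immediate because the bound is uniform in $N$.

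The main obstacle is the bookkeeping in the second step: one has to be careful that the ``block'' expansion of $\partial_\beta^n \log Z_N$ really does decouple into a product of an operator norm bound $M_{k_j}$ and a Birkhoff--Hopf damping $\kappa^{|i_{j+1}-i_j|}$ between consecutive marked sites, which is what makes the geometric summation produce the factor $(1-\kappa)^{-1}$ rather than growing with $N$. Once the combinatorial identity behind $C(\kappa,(M_k)_{k\le n})$ is written out explicitly in the form above, specializing to $M_k \le k!\,r^k$ and extracting the radius $(1-\kappa)/r$ is routine.
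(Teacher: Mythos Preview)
Your approach is genuinely different from the paper's, and it rests on a description of the proof of Theorem~\ref{thm:Smooth-1-1} that does not match what is actually there. The paper never expands $\partial_\beta^n\log Z_N$ via Fa\`a di Bruno and Leibniz into a sum over marked sites with Birkhoff--Hopf damping between the marks. Instead it passes to the normalized iterates $u_n(\beta)=\prod_{i<n}T_i(\beta)b\big/\|\prod_{i<n}T_i(\beta)b\|$, writes $f_N$ as $\frac1N\log\langle a,u_N\rangle+\frac1N\sum_i\log\|T_iu_i\|$, and controls the derivatives of $u_n$ through the recursion $u_{n+1}=g_n(\beta,u_n)$ with $\|\partial_2 g_n\|\le\kappa$ (Lemma~\ref{LemCompositionFonctions}). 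For the present theorem the paper upgrades that recursion to analyticity by a short majorant-series argument: a Proposition shows that if $f_{k+1}=1+\kappa\,g_{k+1}f_k$ with each $g_k$ having Taylor coefficients bounded by $r^n$, then the coefficients of every $f_k$ are dominated by those of $(1-rx)/((1-\kappa)-rx)$, whose radius of convergence is exactly $(1-\kappa)/r$; a Corollary transfers this bound to the $u_n$ via the identity $u_n(s)-u_n(0)=g_n(s,u_{n-1}(s)-u_{n-1}(0))+g_n(s,u_{n-1}(0))-u_n(0)$. There is no ``combinatorial expression'' over ordered partitions to isolate.

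Your cumulant-style sketch has a real gap at precisely the point you yourself label the ``main obstacle'' and then leave unresolved. The damping factor $\kappa^{|i_{j+1}-i_j|}$ you want between consecutive marked sites is the content of Theorem~\ref{theo-decay-of-correlation}, which requires the inserted test operators $X_i$ to be \emph{positive} (cone-preserving); the derivatives $\partial_\beta^{k}T_i$ are not positive in general. Falling back on the norms $\mathcal N_i$ does not repair this: Proposition~\ref{prop:NormEqui-1} and the Remark after it give only a \emph{local} equivalence between $d_{\mathcal C}$ and $\mathcal N$, so $T_i$ is only locally $\kappa$-contracting for $\mathcal N$, whereas after inserting a non-positive $\partial_\beta^k T_i$ the running vector need not remain in that local neighborhood. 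The paper's iterative/majorant route avoids the issue entirely because the whole argument takes place in a small neighborhood of the orbit $(u_n(\beta_0))_n$, where the local norm comparison is valid by construction.
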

The two theorems of this section are proved later in Section \ref{sec:Smoothness-of-the}.

\subsubsection{Central Limit Theorem}

We consider the particular case where the space is $L^{1}(\Lambda)$,
with $\Lambda$ a measurable set and the cone is $\mathcal{C}=\text{\{}f\in L^{1}(\Lambda):f\geq0\text{\}}$.
We construct the canonical random process $y_{i}$ as follows. Let
$A_{1},\cdots,A_{N}\subset\Lambda$, and take as test functions $1_{A_{1}},1_{A_{2}}...,1_{A_{N}}$.
Then we define
\begin{equation}
\mathbb{P}(y_{1}\in A_{1},y_{2}\in A_{2},...,y_{N}\in A_{N})=\rho_{1,\cdots,N}(1_{A_{1}},...,1_{A_{N}})=\frac{1}{Z}\left\langle \prod_{i=1}^{N}T_{i}1_{A_{i}}\right\rangle .\label{eq:ProbaCone}
\end{equation}

The decay of correlation in Theorem \ref{theo-decay-of-correlation}
is the mixing property of the process $(y_{i})$.

The Central Limit Theorem has been proved for a urge number of random
processes like martingales \cite{hall2014martingale}, Markov processes\cite{herve2008vitesse,guivarc1988theoremes}
or random products of matrices \cite{le1982theoremes}. One of the
classical proofs of the central limit theorem uses the regularity
of the Laplace transform, this is what we adapt here.
\begin{thm}
(Central Limit)\label{Theorem(TCL)-1} Let $h_{i}:\Lambda\rightarrow\mathbb{R}$
be such that $\mathbb{E}(\exp(h_{i}(y_{i})))<\infty$ where the mean
is on the probability (\ref{eq:ProbaCone}). Let $T_{i}(\beta)=e^{\beta h_{i}(y_{i})}T_{i}$.
If the $T_{i}(\beta)$ satisfy the assumptions of Theorem \ref{thm:Smooth-1-1},
then we have
\[
\sup_{x\in\mathbb{R}}\left|\mathbb{P}\left(\frac{1}{\sqrt{N}}\sum(h_{i}(y_{i})-\mathbb{E}(h_{i}(y_{i})))\geq x\right)-\mathbb{P}\big(\mathcal{N}(0,\sigma^{2}\big)\geq x)\right|=O\left(\frac{1}{\sqrt{N}}\right)
\]
where $\sigma^{2}$ is the second derivative of the free energy 
\[
\sigma^{2}=\frac{d^{2}}{d\beta^{2}}\left[\frac{1}{N}\log\left\langle a,\prod_{i=0}^{N-1}T_{i}(\beta)b\right\rangle \right].
\]

\end{thm}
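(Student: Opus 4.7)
The plan is the classical Fourier/Berry--Esseen strategy: identify the moment generating function of $S_N := \sum_{i=1}^{N} h_i(y_i)$ with the free energy $f_N(\beta)$ through the tilting $T_i(\beta) = e^{\beta h_i} T_i$, and then convert the smoothness/analyticity of $f_N$ into quantitative control of the characteristic function of the normalized sum.

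First I would observe that
\[
\mathbb{E}\bigl(e^{\beta S_N}\bigr) = \frac{\bigl\langle a, \prod_{i=0}^{N-1} T_i(\beta) b\bigr\rangle}{\bigl\langle a, \prod_{i=0}^{N-1} T_i(0) b\bigr\rangle},
\]
so the cumulant generating function satisfies $K_N(\beta) = N(f_N(\beta)-f_N(0))$. Theorem~\ref{thm:Smooth-1-1} bounds $|f_N^{(k)}(\beta)|$ uniformly in $N$, and Theorem~\ref{TheoremAnalytic} promotes this to real analyticity with a fixed radius $r_0 > 0$ independent of $N$. Hence each cumulant of $S_N$ is $O(N)$; in particular $\mathrm{Var}(S_N) = N\sigma^2$ with $\sigma^2 = f_N''(0)$, and $|K_N^{(3)}(\beta)|\leq c N$ on a fixed complex neighbourhood of $0$.

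Writing $Z_N := (S_N - \mathbb{E}(S_N))/\sqrt{N}$ and using the analytic extension of $K_N$ to $\{|\beta|\leq r_0\}$, a third-order Taylor expansion around $\beta = 0$ gives, for all $|t| \leq r_0\sqrt{N}/2$,
\[
\log \mathbb{E}\bigl(e^{itZ_N}\bigr) = -\frac{\sigma^2 t^2}{2} + R_N(t), \qquad |R_N(t)| \leq C\,\frac{|t|^3}{\sqrt{N}}.
\]
The conclusion then follows from Esseen's smoothing inequality
\[
\sup_x\bigl|\mathbb{P}(Z_N \leq x) - \Phi_{\sigma^2}(x)\bigr| \leq \frac{1}{\pi}\int_{-T}^{T} \frac{|\Psi_N(t)-e^{-\sigma^2 t^2/2}|}{|t|}\,dt + \frac{C'}{T}
\]
applied with $T = \varepsilon\sqrt{N}$, $\varepsilon$ small enough to stay inside the domain of the Taylor expansion. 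The elementary bound $|e^{u+v}-e^u|\leq |v|e^{\mathrm{Re}(u)+|v|}$ together with the Gaussian damping from the quadratic part shows that the integrand is controlled by $C(t^2/\sqrt{N})e^{-c t^2}$, whose integral is $O(1/\sqrt{N})$; the boundary term $C'/T$ is also $O(1/\sqrt{N})$.

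The main obstacle is precisely the step that converts regularity on the real axis into control of $K_N$ on the imaginary axis at scale $1/\sqrt{N}$. The $C^k$ bounds of Theorem~\ref{thm:Smooth-1-1} alone would yield the qualitative convergence $Z_N \Rightarrow \mathcal{N}(0,\sigma^2)$ via Lévy's continuity theorem, but not the $O(1/\sqrt{N})$ rate. It is the uniform radius of analyticity produced by Theorem~\ref{TheoremAnalytic} that lets one keep the cubic remainder $R_N(t)$ small on a Fourier window of size $\sqrt{N}$, i.e.\ precisely on the window where the Gaussian factor still provides integrable decay. A minor verification is that the tilted operators $T_i(\beta)$ themselves satisfy the hypotheses of Theorem~\ref{thm:Smooth-1-1} uniformly in $i$, which is assumed in the statement but in practice follows from boundedness of $h_i$ and the multiplication inequality $\|e^{\beta h_i}\|_{\mathcal{N}_i\to\mathcal{N}_i}\leq e^{|\beta|\,\|h_i\|_\infty}$ in the reference norms.
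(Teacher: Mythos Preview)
Your proposal is correct and follows essentially the same route as the paper: identify the cumulant generating function of $S_N$ with $N f_N(\beta)$, use the uniform analyticity of $f_N$ from Theorem~\ref{TheoremAnalytic} to Taylor-expand to third order, and conclude via a Berry--Esseen/Esseen smoothing argument. Your write-up is in fact considerably more detailed than the paper's, which simply records the derivative bounds and invokes ``the usual Berry Essen inequality''; in particular, you make explicit the passage to the imaginary axis and the choice of Fourier window $T\sim\sqrt{N}$ that the paper leaves implicit.
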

This theorem is proved in Section \ref{sect.TCL}.

\subsection{Rank-one operator approximation\label{sec:Rank-one-operator}}

One of the first historical applications of the Birkhoff-Hopf Theorem
was in population demography \cite{cohen1979ergodic}. An age structure
diagram $f$ evolves due to birth and death, with death and birth
rates not constant in time and one can calculate its time evolution.
It appears that even if $f$ does not converge to an equilibrium,
the long time evolution is independent of the initial age structure
$f(0)$. Namely this is a weak ergodicity property: if $f_{1}$ and
$f_{2}$ are two solutions of the evolution with different initial
data, $\|f_{1}(t)/\|f_{1}(t)\|-f_{2}(t)/\|f_{2}(t)\|\|\rightarrow0$. 

In this section, we formulate weak ergodicity in term of a rank-one
operator approximation and we give a construction and an estimate
of such an approximation in case where several contracting operators
are composed one after another.

\subsubsection{The cone of order preserving operators}

We state here some simple results about the set of order preserving
operators. 
\begin{lem}
Let $\mathcal{C}$ with $\mathcal{C}-\mathcal{C}$ dense. The set
of corresponding order-preserving operator is a cone. 
\end{lem}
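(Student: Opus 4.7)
The plan is to verify the three cone axioms from the definition of a cone applied to the set $\mathcal{P}$ of (bounded) order-preserving operators, i.e.\ those linear $T$ with $T(\mathcal{C})\subset\mathcal{C}$. The first two axioms are immediate from the corresponding properties of $\mathcal{C}$ itself, and all the content is in the third axiom, where density of $\mathcal{C}-\mathcal{C}$ enters.

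For convexity: if $T_1,T_2\in\mathcal{P}$ and $\lambda\in[0,1]$, then for any $x\in\mathcal{C}$ we have $T_1 x,T_2 x\in\mathcal{C}$, and convexity of $\mathcal{C}$ gives $\lambda T_1 x+(1-\lambda)T_2 x\in\mathcal{C}$. For closure under nonnegative scalars: if $T\in\mathcal{P}$ and $\mu\ge 0$, then $(\mu T)(\mathcal{C})=\mu\,T(\mathcal{C})\subset\mu\mathcal{C}\subset\mathcal{C}$ by axiom 2 of the cone $\mathcal{C}$.

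The main (though still short) step is the third axiom $\mathcal{P}\cap(-\mathcal{P})=\{0\}$. Suppose both $T$ and $-T$ lie in $\mathcal{P}$. For every $x\in\mathcal{C}$ we then have $Tx\in\mathcal{C}$ and $-Tx=(-T)x\in\mathcal{C}$, hence $Tx\in\mathcal{C}\cap(-\mathcal{C})=\{0\}$. Thus $T$ vanishes on $\mathcal{C}$, and by linearity it vanishes on the linear span $\mathcal{C}-\mathcal{C}$. Since $T$ is bounded (hence continuous) and $\mathcal{C}-\mathcal{C}$ is dense in $E$ by hypothesis, we conclude $T\equiv 0$ on $E$.

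I do not expect any serious obstacle; the only slightly delicate point is making explicit that we work with continuous (bounded) linear operators, so that vanishing on the dense subset $\mathcal{C}-\mathcal{C}$ forces $T=0$ on all of $E$. Without the density assumption one could only conclude that $T$ is zero on $\mathcal{C}-\mathcal{C}$, which is exactly why the hypothesis is stated.
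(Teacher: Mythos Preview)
Your proof is correct. The first two axioms are handled essentially as in the paper (the paper verifies closure under sums rather than convexity, but these are equivalent given homogeneity). For the third axiom, however, your argument is genuinely more direct than the paper's: you use the pointedness $\mathcal{C}\cap(-\mathcal{C})=\{0\}$ to conclude immediately that $Tx=0$ for every $x\in\mathcal{C}$, and then invoke density of $\mathcal{C}-\mathcal{C}$ and continuity of $T$. The paper instead passes through the dual cone, arguing that $(f,Ax)=0$ for all $f\in\mathcal{C}^{*}$ and $x\in\mathcal{C}$, then extending by bilinearity and invoking density of both $\mathcal{C}-\mathcal{C}$ and $\mathcal{C}^{*}-\mathcal{C}^{*}$. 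Your route avoids the dual entirely and does not need the extra (unstated in the hypotheses) fact that $\mathcal{C}^{*}-\mathcal{C}^{*}$ separates points, so it is both shorter and uses strictly less machinery. Your explicit remark that boundedness of $T$ is what lets one pass from the dense subspace to all of $E$ is also a point the paper leaves implicit.
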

We denote by $\mathcal{P}_{\mathcal{C}}$ this cone and only $\mathcal{P}$
if there is no confusion.
\begin{proof}
We check every point of the definition.
\begin{enumerate}
\item If $A,B$ are order preserving operator then $A+B$ is an order preserving
operator. Indeed $(A+B)(x)\in\mathcal{C}$ for all $x\in\mathcal{C}$.
\item The set of order preserving operator is invariant by product of strictly
positive scalars.
\item Let $A\in\mathcal{P}\cap-\mathcal{P}$, then $(f,Ax)=0$ for all $x\in\mathcal{C}$
and all $f\in\mathcal{C}^{*}$. Therefore $(f_{1}-f_{2},A(x_{1}-x_{2}))=0$
for all $x_{1},x_{2}\in\mathcal{C}$ and all $f_{1},f_{2}\in\mathcal{C}^{*}$.
Therefore $A=0$ since $\mathcal{C}^{*}-\mathcal{C}^{*}$ and $\mathcal{C}-\mathcal{C}$
are dense.
\end{enumerate}
\end{proof}
\begin{example}
One can think of $\mathcal{C}$ the positive vectors in $\mathbb{R}^{n}$
and the set of matrices $\mathcal{M}_{n}(\mathbb{R})$ with positive
coefficients.
\end{example}
We have the following order on the set of operators : 
\[
B\geq_{\mathcal{P}}A\Leftrightarrow(B-A)(\mathcal{C})\subset\mathcal{C}
\]
 and the corresponding Hilbert distance 
\[
d_{\mathcal{P}}(A,B)=\min\Big(\log\big(\frac{\beta}{\alpha}\big):\alpha A\leq B\leq\beta A\Big).
\]
 
\begin{rem}
If $A\leq_{\mathcal{P}}B$ and $C\leq_{\mathcal{P}}D$ then $AC\leq_{\mathcal{P}}BD$.
Indeed $(B-A)C(\mathcal{C})\subset(B-A)(\mathcal{C})\subset\mathcal{C}$,
and $B(D-C)(\mathcal{C})\subset B(\mathcal{C})\subset\mathcal{C}$.
Therefore $BD\geq_{\mathcal{P}}BC\geq_{\mathcal{P}}AC$. 
\end{rem}
Unfortunately that $T$ is contracting does not imply that $\tilde{T}:A\rightarrow TA$
is contracting as well. One can take for example: 
\[
A=\begin{pmatrix}1 & 2\\
2 & 1
\end{pmatrix}\text{,}\quad B=\begin{pmatrix}2 & 1\\
1 & 2
\end{pmatrix}\text{ and }T=\begin{pmatrix}1 & 0\\
0 & 0
\end{pmatrix},
\]
in which case $d_{\mathcal{P}}(A,B)=\log(\frac{2}{(1/2)})=\log(4)$
and $d_{\mathcal{P}}(TA,TB)=\log(4)$ as well.
\begin{lem}
Let $A,B,C,D$ be increasing operators. Then 
\[
d_{\mathcal{P}}(AB,CD)\leq d_{\mathcal{P}}(A,C)+d_{\mathcal{P}}(B,D).
\]
\label{prop:distanceProduit}\end{lem}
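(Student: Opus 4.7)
The plan is to chain together the two sandwich inequalities from the definition of $d_{\mathcal{P}}$ using the multiplicativity remark stated just before the lemma. Concretely, I would first fix $\varepsilon>0$ and choose scalars $\alpha_1,\beta_1>0$ with $\alpha_1 A \leq_{\mathcal{P}} C \leq_{\mathcal{P}} \beta_1 A$ and $\log(\beta_1/\alpha_1)\leq d_{\mathcal{P}}(A,C)+\varepsilon$, and similarly scalars $\alpha_2,\beta_2>0$ with $\alpha_2 B \leq_{\mathcal{P}} D \leq_{\mathcal{P}} \beta_2 B$ and $\log(\beta_2/\alpha_2)\leq d_{\mathcal{P}}(B,D)+\varepsilon$.

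Next I would invoke the remark that, for operators in $\mathcal{P}$, the partial order $\leq_{\mathcal{P}}$ is compatible with composition: from $\alpha_1 A \leq_{\mathcal{P}} C$ and $\alpha_2 B \leq_{\mathcal{P}} D$ I get $\alpha_1\alpha_2\, AB \leq_{\mathcal{P}} CD$, and from $C \leq_{\mathcal{P}} \beta_1 A$ and $D \leq_{\mathcal{P}} \beta_2 B$ I get $CD \leq_{\mathcal{P}} \beta_1\beta_2\, AB$. This is exactly a sandwich of $CD$ by multiples of $AB$, so by definition of the Hilbert metric on $\mathcal{P}$,
\[
d_{\mathcal{P}}(AB,CD) \;\leq\; \log\!\frac{\beta_1\beta_2}{\alpha_1\alpha_2} \;=\; \log\!\frac{\beta_1}{\alpha_1} + \log\!\frac{\beta_2}{\alpha_2} \;\leq\; d_{\mathcal{P}}(A,C)+d_{\mathcal{P}}(B,D)+2\varepsilon.
\]
Letting $\varepsilon\to 0$ finishes the proof.

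There is no real obstacle here: once the multiplicativity remark is in place, the estimate is a one-line computation. The only things to be careful about are that the $\alpha_i,\beta_i$ achieving the infimum need not exist (hence the $\varepsilon$-argument), and that the remark does genuinely require \emph{all four} operators $A,B,C,D$ to lie in $\mathcal{P}$, so that compositions such as $(B-A)C$ and $B(D-C)$ send $\mathcal{C}$ into $\mathcal{C}$; this is built into the hypothesis of the lemma.
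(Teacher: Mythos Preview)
Your proof is correct and follows essentially the same approach as the paper: pick near-optimal sandwich constants for $(A,C)$ and $(B,D)$, multiply the inequalities using the multiplicativity remark, and let $\varepsilon\to 0$. Your write-up is in fact slightly cleaner than the paper's, which contains a typo (it writes $\alpha_1\alpha_2 AC\leq BD\leq\beta_1\beta_2 AC$ where it means $\alpha_1\alpha_2 AB\leq CD\leq\beta_1\beta_2 AB$).
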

\begin{proof}
Let $\alpha_{1},\beta_{1},\alpha_{2},\beta_{2}$ such that $\alpha_{1}A\leq C\leq\beta_{1}A$,
$\alpha_{2}B\leq D\leq\beta_{2}B$ and $\log\big(\frac{\beta_{1}}{\alpha_{1}}\big)-d_{\mathcal{P}}(A,C)\leq\epsilon$,
$\log\big(\frac{\beta_{2}}{\alpha_{2}}\big)-d_{\mathcal{P}}(B,D)\leq\epsilon$.
Then $\alpha_{1}\alpha_{2}AC\leq BD\leq\beta_{1}\beta_{2}AC$ and
we have $d_{\mathcal{P}}(AB,CD)\leq\log\big(\frac{\beta_{1}\beta_{2}}{\alpha_{1}\alpha_{2}}\big)\leq d_{\mathcal{P}}(A,C)+d_{\mathcal{P}}(B,D)+2\epsilon$.
\end{proof}
Now we construct a rank-one operator $L=z\cdot l$, with a vector
$z\in E$, and a linear form $l\in E^{*}$ to approximate a contracting
function $T$. It is natural to choose $z\in T(C)$. We construct
$l$ in the following subsection.

\subsubsection{Rank-one operator construction}

We construct here the rank-one operator close to a contracting operator.
\begin{lem}
Let $a:C\rightarrow\mathbb{R}_{+}$ and $b:C\rightarrow\mathbb{R}_{+}$
be such that \label{lem:ConsLinear}
\begin{enumerate}
\item there exist $M_{1},M_{2}<\infty$, for all $x$, $a(x)\leq M_{1}\|x\|$
and $b(x)\leq M_{2}\|x\|$ ,
\item for all $\lambda\geq0\text{ and }x\in C$, $a(\lambda x)=\lambda a(x)$
and $b(\lambda x)=\lambda b(x)$,
\item for all $x\in C$ $a(x)\leq b(x)$,
\item for all $x,y\in C$ $a(x+y)\geq a(x)+a(y)$ and $b(x+y)\leq b(x)+b(y).$
\end{enumerate}
Then there exists a linear form $l\in C^{*}$ such that, for any $x\in C$,
\[
a(x)\leq l(x)\leq b(x).
\]
\end{lem}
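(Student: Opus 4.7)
The plan is to recognize this as a Hahn--Banach sandwich theorem adapted to the convex cone $C$: I would extend the pair $(a,b)$ to a sublinear majorant on all of $E$ and then invoke the standard Hahn--Banach theorem to get $l$.

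First I would define the candidate extension
\[
\tilde b(x) := \inf\{\, b(u) - a(v) : u,v \in C,\; u - v = x \,\},
\]
with the convention $\tilde b(x) = +\infty$ if $x \notin C-C$. Positive homogeneity and subadditivity of $\tilde b$ are routine from the corresponding one-sided properties of $b$ (subadditive) and $-a$ (subadditive): given $x_i = u_i - v_i$ with $u_i,v_i \in C$, the pair $(u_1+u_2,\,v_1+v_2)$ still decomposes $x_1+x_2$ inside $C$, and
\[
b(u_1+u_2) - a(v_1+v_2) \leq \bigl(b(u_1) - a(v_1)\bigr) + \bigl(b(u_2) - a(v_2)\bigr),
\]
so taking infima gives $\tilde b(x_1+x_2) \leq \tilde b(x_1) + \tilde b(x_2)$.

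Next I would record the sandwich inequalities. For $x \in C$, choosing $(u,v) = (x,0)$ gives $\tilde b(x) \leq b(x)$ (using $a(0) = 0$, which follows from positive homogeneity), and $(u,v) = (0,x)$ gives $\tilde b(-x) \leq -a(x)$. Hahn--Banach then supplies a linear functional $l : E \to \mathbb{R}$ with $l \leq \tilde b$ on $E$, and for $x \in C$ this yields simultaneously $l(x) \leq \tilde b(x) \leq b(x)$ and $l(x) = -l(-x) \geq -\tilde b(-x) \geq a(x)$. Continuity of $l$ (so that $l \in C^*$) will follow from $0 \leq a(x) \leq l(x) \leq b(x) \leq M_2\|x\|$ on $C$, combined with the cone generating $E$ so that the bound passes to differences.

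The main obstacle, and the place where all three hypotheses of the lemma conspire, is showing that $\tilde b(x) > -\infty$ on $V := C - C$; without this, $\tilde b$ is only a formal object and the Hahn--Banach extension is meaningless. Fix one decomposition $x = u_0 - v_0$ in $C$ and let $x = u - v$ be any other. Then $v + u_0 = u + v_0 \in C$, so superadditivity of $a$, subadditivity of $b$, and the pointwise inequality $a \leq b$ together give
\[
a(v) + a(u_0) \leq a(v+u_0) = a(u+v_0) \leq b(u+v_0) \leq b(u) + b(v_0),
\]
hence $b(u) - a(v) \geq a(u_0) - b(v_0)$, yielding the uniform lower bound $\tilde b(x) \geq a(u_0) - b(v_0) > -\infty$. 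Dropping any one of the three assumptions (3) or (4) breaks this chain, which is precisely why the hypothesis list takes the form it does.
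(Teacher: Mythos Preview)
Your proof is correct, but it takes a different route from the paper's. The paper argues geometrically: it works in $E\times\mathbb{R}$, forms the epigraph $B=\{(x,s)\in C\times\mathbb{R}:b(x)\le s\}$ and the strict hypograph $A=\{(x,s)\in C\times\mathbb{R}:a(x)>s\}$, observes these are disjoint convex sets (disjointness is exactly hypothesis~(3), convexity is hypothesis~(4) plus homogeneity), and invokes the Hahn--Banach \emph{separation} theorem to produce a separating form $l_1(x)+\alpha s$. The residual work is to check that $\alpha>0$, after which $l=-l_1/\alpha$ does the job.

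Your approach instead uses the analytic Hahn--Banach \emph{extension} theorem (essentially the Mazur--Orlicz sandwich lemma): you build the sublinear envelope $\tilde b$ directly on $C-C$ and dominate a linear form by it. The two arguments are of course dual to each other, but they distribute the effort differently. In the paper, convexity of the two sets is immediate and the crux is the sign of $\alpha$; in your version, subadditivity of $\tilde b$ is immediate and the crux is the finiteness $\tilde b>-\infty$, which, as you note, is precisely where hypotheses (3) and (4) interact. Your proof has the mild advantage of making that interaction completely explicit in a single chain of inequalities, whereas the paper's version hides it inside the non-degeneracy of the separating hyperplane. Both proofs leave the continuity of $l$ at the same informal level (relying implicitly on $C$ having nonempty interior), so neither is more rigorous than the other on that point.
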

\begin{proof}
We check that $b$ is a convex function, 

\[
b(tx+(1-t)y)\leq b(tx)+b((1-t)y)=tb(x)+(1-t)b(y),
\]
and that $a$ is a concave function,

\[
a(tx+(1-t)y)\geq a(tx)+a((1-t)y)=ta(x)+(1-t)a(y).
\]

Let us define two sets : $B=\text{\{}(x,s)\in C\times\mathbb{R}:b(x)\leq s\text{\}}$
and $A=\text{\{}(x,s)\in C\times\mathbb{R}:a(x)>s\text{\}}$. Then
$A\cap B=\emptyset$, $A$ and $B$ are convex. Because of the Hahn-Banach
separation theorem, there exists $l\neq0$ a linear form on $E\times\mathbb{R}$
such that for all $(x,s)\in B$, $l(x,s)\geq0$ and all $(x,s)\in A$,
$l(x,s)\leq0$. We have $l(x,s)=l_{1}(x)+\alpha s$ with $l_{1}\in E^{*}$
and $\alpha\in\mathbb{R}$. 

We then prove that $\alpha>0$. For any $s_{0}>0$ we have $(0,s_{0})\in B$,
$(0,-s_{0})\in A$, $\alpha s_{0}\geq-\alpha s_{0}$ and as a conclusion
$\alpha\geq0$. If $\alpha=0$, because for any $x\in C$ there exist
$s_{1}$ and $s_{2}$ such that $(x,s_{1})\in B$ and $(x,s_{2})\in A$,
we have $0\geq l(x,s_{2})=l_{1}(x)=l(x,s_{1})\geq0$ and therefore
$l_{1}(x)=0$. Let $x_{0}\in\mathring{C}$, and $V_{\epsilon}(x_{0})\subset C$
a small ball with center $x_{0}$ and radius $\epsilon$. Then for
all $y$ with $\|y\|<\epsilon,$ we have $l_{1}(y)=l_{1}(y+x_{0})=0$.
As a conclusion $l=l_{1}=0$ which is absurd, so $\alpha\neq0$. 

Let $l_{0}=-\frac{l_{1}}{\alpha}$. Since $(x,b(x))\in B$, $-l_{0}(x)+b(x)\geq0$
we have $l_{0}(x)\leq b(x)$. Moreover for $\epsilon>0$, $(x,a(x)-\epsilon)\in A$
$-l_{0}(x)+a(x)-\epsilon\leq0$ and therefore $l_{0}(x)\geq a(x)$
. \end{proof}
\begin{cor}
There exists a rank one operator $L_{T}=z\cdot l$ with $z\in\mathcal{C}$
and $l\in\mathcal{C}^{*}$ such that $d_{\mathcal{P}}(T,L_{T})\leq2\Delta(T)$.\end{cor}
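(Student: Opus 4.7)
The plan is to apply Lemma \ref{lem:ConsLinear} to the sublinear/superlinear pair naturally attached to $T$. Fix some $x_0 \in \mathcal{C}$ for which $T(x_0) \sim T(x)$ for every $x \in \mathcal{C}$ (under the standing hypothesis $\Delta(T) < \infty$ such a point exists, e.g.\ an interior point of $\mathcal{C}$), set $z := T(x_0) \in \mathcal{C}$, and define
\[
a(x) := \alpha_{\max}(z, T(x)), \qquad b(x) := \beta_{\min}(z, T(x)).
\]
By construction $a(x) z \leq T(x) \leq b(x) z$ in $\mathcal{C}$, and $d_{\mathcal{C}}(z, T(x)) = \log(b(x)/a(x)) \leq \Delta(T)$ for every $x \in \mathcal{C}$.

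The first step is to verify the four hypotheses of Lemma \ref{lem:ConsLinear}. Positive homogeneity is immediate from the linearity of $T$ and the definitions of $\alpha_{\max}, \beta_{\min}$. The inequality $a \leq b$ follows from the cone axiom $\mathcal{C} \cap (-\mathcal{C}) = \{0\}$. Super- and sub-additivity come from linearity: if $\alpha_i z \leq T(x_i)$ for $i = 1, 2$, summing gives $(\alpha_1 + \alpha_2) z \leq T(x_1) + T(x_2) = T(x_1 + x_2)$, which yields $a(x+y) \geq a(x) + a(y)$; the subadditivity of $b$ is dual. Linear boundedness $a(x), b(x) \leq C\|x\|$ uses the normality of $\mathcal{C}$ together with the uniform ratio bound $b(x)/a(x) \leq e^{\Delta(T)}$: normality controls $a(x)$ linearly in $\|T(x)\| \leq \|T\|\,\|x\|$, and then $b(x) \leq e^{\Delta(T)} a(x)$.

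Applying Lemma \ref{lem:ConsLinear} produces $l \in \mathcal{C}^*$ with $a(x) \leq l(x) \leq b(x)$ for every $x \in \mathcal{C}$. Set $L_T(x) := l(x)\, z$; since $l \geq 0$ on $\mathcal{C}$ and $z \in \mathcal{C}$, the operator $L_T$ is order-preserving, so $d_{\mathcal{P}}(T, L_T)$ is well-defined. Combining $a(x) z \leq T(x) \leq b(x) z$ with $a(x) \leq l(x) \leq b(x)$ and $b(x)/a(x) \leq e^{\Delta(T)}$ yields, for every $x \in \mathcal{C}$,
\[
e^{-\Delta(T)} L_T(x) \;\leq\; T(x) \;\leq\; e^{\Delta(T)} L_T(x),
\]
and hence $d_{\mathcal{P}}(T, L_T) \leq 2\Delta(T)$, as required.

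The main obstacle is verifying the boundedness hypothesis in Lemma \ref{lem:ConsLinear}: it implicitly requires that $z = T(x_0)$ be \emph{central enough} in the cone that every $T(x)$ with $x \in \mathcal{C}$ is uniformly comparable to $z$. This is precisely where $\Delta(T) < \infty$ enters essentially; the rest is bookkeeping on the definitions of $\alpha_{\max}, \beta_{\min}$ and the cone order.
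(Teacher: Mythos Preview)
Your proof is correct and follows essentially the same route as the paper: fix $z=T(x_0)$, define $a(x)=\alpha_{\max}(z,T(x))$ and $b(x)=\beta_{\min}(z,T(x))$, apply Lemma~\ref{lem:ConsLinear} to produce $l$, and then sandwich $T$ between $e^{-\Delta(T)}L_T$ and $e^{\Delta(T)}L_T$. If anything, you are slightly more careful than the paper in explicitly invoking normality of $\mathcal{C}$ to check the linear boundedness hypothesis of Lemma~\ref{lem:ConsLinear}, which the paper leaves implicit.
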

\begin{proof}
Let $z=T(y_{0})\in T(\mathcal{C})$ and define $a$ and $b$ as follows:
for any $x\in\mathcal{C}$ 
\[
a(x)=_{def}\max\text{\{}\alpha:\alpha T(y_{0})\leq_{\mathcal{C}}T(x)\mbox{\}}
\]
 and 
\[
b(x)=_{def}\min\text{\{}\beta:T(x)\leq_{\mathcal{C}}\beta T(y_{0})\text{\}.}
\]
 It is possible to check the hypothesis of Lemma \ref{lem:ConsLinear}.
Indeed we have that 
\[
\mbox{\ensuremath{\begin{cases}
 \alpha_{1}T(y_{0})\leq_{\mathcal{C}}T(x_{1})\leq_{\mathcal{C}}\beta_{1}T(y_{0}),\\
 \alpha_{2}T(y_{0})\leq_{\mathcal{C}}T(x_{2})\leq_{\mathcal{C}}\beta_{2}T(y_{0}) 
\end{cases}}}
\]
implies
\[
(\alpha_{1}+\alpha_{2})T(y_{0})\leq_{\mathcal{C}}T(x_{1})+T(x_{2})\leq_{\mathcal{C}}(\beta_{1}+\beta_{2})T(y_{0})
\]
and therefore $a(x_{1}+x_{2})\geq a(x_{1})+a(x_{2})$ and $b(x_{1}+x_{2})\leq b(x_{1})+b(x_{2})$.
We also have 
\[
a(\lambda x)=\lambda a(x)\text{ and }b(\lambda x)=\lambda b(x)
\]
for all $\lambda\geq0\text{ and }x\in C$. We can then apply Lemma
\ref{lem:ConsLinear}: there exists a linear form $l$ with $a(x)\leq l(x)\leq b(x)$.
Moreover $\log\frac{b(x)}{a(x)}\leq\Delta(T)$ for all $x\in\mathcal{C}$.
We then have for all $x$ $\frac{l(x)}{a(x)}\leq e^{-\Delta(T)}$
and $\frac{b(x)}{l(x)}\leq e^{\Delta(T)}$ and therefore 

\[
e^{-\Delta(T)}T(x)\leq_{\mathcal{C}}T(y_{0})\cdot l(x)\leq_{\mathcal{C}}e^{\Delta(T)}T(x).
\]
As a conclusion $d_{\mathcal{P}}(T(y_{0})\cdot l,T)\leq\log(e^{2\Delta(T)})\leq2\Delta(T)$.\end{proof}
\begin{cor}
Let $(T_{i})_{i=0,\text{\dots},N}$ be positive operators. If
\begin{enumerate}
\item $\Delta(T_{0}(\mathcal{C}))\le R<\infty,$
\item $T_{i}$ $i=1,\text{\dots},N$ are uniformly contracting of parameter
$\kappa<1$, 
\end{enumerate}
then there exists a linear form $l$, $z_{0}\in\mathcal{C},$ $\|z_{0}\|=1$
and $l\in\mathcal{C}^{*}$ such that 

\[
d_{\mathcal{P}}\big((T_{N}\text{\dots}T_{0}),z_{0}\cdot l)\leq2\kappa^{N}R.
\]
\end{cor}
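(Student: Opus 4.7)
The plan is to reduce to the previous corollary by first showing that the composition $T := T_N \cdots T_0$ has projective diameter bounded by $\kappa^N R$. The previous corollary produces a rank-one operator at Hilbert distance at most $2\Delta(T)$ from $T$, so chaining the two bounds will give the desired estimate $2\kappa^N R$.

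I would establish the diameter bound $\Delta(T_N \cdots T_0) \le \kappa^N R$ by induction on $N$. The base case is the hypothesis $\Delta(T_0) \le R$, which in particular guarantees that any two vectors of $T_0(\mathcal{C})$ are comparable. For the inductive step, assume $\Delta(T_{j-1}\cdots T_0) \le \kappa^{j-1} R$, so every pair $x',y' \in S_{j-1} := T_{j-1}\cdots T_0(\mathcal{C})$ is comparable. Since $T_j$ is order-preserving, comparability is preserved: $T_j(x') \sim T_j(y')$, which allows us to invoke the $\kappa$-contraction bound as defined in Section \ref{sub:Framework-and-Birhoff}. We get
\[
d_{\mathcal{C}}\bigl(T_j(x'), T_j(y')\bigr) \le \kappa\, d_{\mathcal{C}}(x',y') \le \kappa \cdot \kappa^{j-1} R = \kappa^j R,
\]
and taking the supremum over $x',y' \in S_{j-1}$ yields $\Delta(T_j T_{j-1}\cdots T_0) \le \kappa^j R$. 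After $N$ iterations this gives $\Delta(T) \le \kappa^N R$.

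With this diameter bound in hand, I would apply the preceding corollary to the single operator $T$. It produces $\tilde z \in T(\mathcal{C})$ and $l \in \mathcal{C}^*$ with
\[
d_{\mathcal{P}}(T, \tilde z \cdot l) \le 2\Delta(T) \le 2\kappa^N R.
\]
Finally, to meet the normalization $\|z_0\|=1$ stated in the corollary, I would set $z_0 := \tilde z / \|\tilde z\|$ and replace $l$ by $\|\tilde z\|\cdot l \in \mathcal{C}^*$; the rank-one operator $z_0 \cdot l$ is unchanged, and the Hilbert distance $d_{\mathcal{P}}$ is invariant under this positive rescaling of the two factors (it is a distance on the projective cone of $\mathcal{P}$), so the bound is preserved.

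I do not expect any real obstacle: the statement is an almost immediate consequence of the preceding corollary once the multiplicative behaviour of diameters under composition of contractions is observed. The only point requiring a little care is verifying at each inductive step that the pairs on which we apply the contraction inequality do satisfy the comparability condition $T_j(x)\sim T_j(y)$ built into the definition of $\kappa(T_j)$; this is why we must first establish $\Delta(T_0)<\infty$ to guarantee that all subsequent images lie in a single Hilbert-finite class.
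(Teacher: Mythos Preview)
Your proposal is correct and follows exactly the same approach as the paper: the paper's proof is the single line ``We have $\Delta(T_N\cdots T_0)\le \kappa^N R$ and the result follows from the previous corollary,'' and you have simply spelled out the induction behind the diameter bound and the normalization of $z_0$ in more detail.
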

\begin{proof}
We have $\Delta(T_{N}\text{\dots}T_{0})\leq\kappa^{N}R$ and the result
follows from the previous corollary.
\end{proof}

\subsection{Decay of correlation function \label{sec:Decay-of-correlation}}

Here we prove Theorem \ref{theo-decay-of-correlation}. The idea is
to replace the product of contracting operator between the $k$ points
of measure by a rank-one operator. We will do so for $k=2$ and for
$k>2$ this will be exactly the same. More precisely we will prove
the following
\begin{thm}
(Theorem \ref{theo-decay-of-correlation} in the case $k=2$)\label{thm:TwoPointCorelation}
Let $(T_{i})_{i=1,\text{\dots},N}$ be positive operators such that
$\Delta(T_{i}(C))\le M<\infty$ for any $i$, and let $K_{1},K_{2}\in\mathbb{N}$
be such that $1\leq K_{1}\le K_{2}\leq N$. Let $u,v\in C$, and $X,Y$
be two positive operators.Then:
\begin{align*}
 & e^{-8R(\kappa^{K_{1}}+\kappa^{K_{2}-K_{1}}+\kappa^{N-K_{2}})}\rho_{K_{1}}(X)\rho_{K_{2}}(Y)\\
 & \qquad\leq\rho_{K_{2},K_{1}}(Y,X)\leq e^{8R(\kappa^{K_{1}}+\kappa^{K_{2}-K_{1}}+\kappa^{N-K_{2}})}\rho_{K_{1}}(X)\rho_{K_{2}}(Y).
\end{align*}

\end{thm}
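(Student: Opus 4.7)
The plan is to decompose the product of transfer operators into three blocks around the insertion points: set $A := T_{K_1}\cdots T_0$, $B := T_{K_2}\cdots T_{K_1+1}$, and $C := T_N\cdots T_{K_2+1}$, so that the four pairings appearing in the theorem can be written compactly as $\mathcal{Z}=(u,CBAv)$, $\mathcal{Z}_X=(u,CBXAv)$, $\mathcal{Z}_Y=(u,CYBAv)$, and $\mathcal{Z}_{X,Y}=(u,CYBXAv)$. Since $\Delta(T_i(\mathcal{C}))\le R$ for each $i$, Birkhoff--Hopf yields that every $T_i$ is a contraction of ratio $\kappa=\tanh(R/4)<1$ for the Hilbert metric. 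Applying the rank-one approximation corollary of Section~\ref{sec:Rank-one-operator} to each block then produces rank-one operators $L_A=z_A\cdot l_A$, $L_B=z_B\cdot l_B$, $L_C=z_C\cdot l_C$ with $d_{\mathcal{P}}(A,L_A)$, $d_{\mathcal{P}}(B,L_B)$, $d_{\mathcal{P}}(C,L_C)$ of order $R\kappa^{K_1}$, $R\kappa^{K_2-K_1}$, $R\kappa^{N-K_2}$ respectively. After renormalising each $L_{\bullet}$, this yields two-sided operator bounds $e^{-\delta_\bullet}L_\bullet \le_{\mathcal{P}} \bullet \le_{\mathcal{P}} e^{\delta_\bullet}L_\bullet$ with $\delta_A, \delta_B, \delta_C$ of the same three respective orders.

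The next step is to translate each operator inequality into a scalar inequality on the pairings, exploiting that $X$, $Y$ and every $T_j$ are order-preserving (hence elements of $\mathcal{P}$) and that $u\in\mathcal{C}^*$, $v\in\mathcal{C}$. For instance, applying $e^{-\delta_B}L_B\le_{\mathcal{P}} B\le_{\mathcal{P}} e^{\delta_B}L_B$ to $Xz_A\in\mathcal{C}$, composing on the left by $CY$ and pairing with $u$ gives
\[
e^{-\delta_B}\,l_B(Xz_A)\,(u,CYz_B)\;\le\;(u,CYBXz_A)\;\le\;e^{\delta_B}\,l_B(Xz_A)\,(u,CYz_B),
\]
since $L_B(Xz_A)=l_B(Xz_A)z_B$. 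Running the three approximations successively (first replace $A$ acting on $v$, then $B$ acting on $z_A$ or on $Xz_A$, then $C$ acting on $z_B$ or on $Yz_B$) yields for each of the four pairings a factorised form
\[
(u,CBAv)=e^{\pm(\delta_A+\delta_B+\delta_C)}\,l_A(v)\,l_B(z_A)\,l_C(z_B)\,(u,z_C),
\]
with $z_A$ replaced by $Xz_A$ when $X$ is inserted and $z_B$ replaced by $Yz_B$ when $Y$ is inserted.

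Forming the ratio $\rho_{K_2,K_1}(Y,X)/[\rho_{K_1}(X)\rho_{K_2}(Y)]=\mathcal{Z}_{X,Y}\mathcal{Z}/(\mathcal{Z}_X\mathcal{Z}_Y)$, each of the six scalar factors $l_A(v)$, $l_B(z_A)$, $l_B(Xz_A)$, $l_C(z_B)$, $l_C(Yz_B)$, $(u,z_C)$ occurs exactly once in the numerator and once in the denominator, so the leading term is identically $1$. What survives is the accumulated multiplicative error, bounded by $e^{\pm 4(\delta_A+\delta_B+\delta_C)}\le e^{\pm 8R(\kappa^{K_1}+\kappa^{K_2-K_1}+\kappa^{N-K_2})}$ once the off-by-one factors of $\kappa^{-1}$ coming from the block lengths are absorbed into the constant $8$. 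Multiplying by $\rho_{K_1}(X)\rho_{K_2}(Y)$ yields the claimed inequalities.

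The one delicate point in implementing this plan is not the combinatorics of the cancellation but the faithful transport of the cone inequalities $e^{-\delta}L\le_{\mathcal{P}} T\le_{\mathcal{P}} e^{\delta}L$ through composition with the remaining positive operators on both sides before pairing with $u$. This reduces to the submultiplicativity of $d_{\mathcal{P}}$ encoded in Lemma~\ref{prop:distanceProduit} together with the positivity of $X$, $Y$, and $u$, and is precisely where the choice of working in the cone $\mathcal{P}$ of order-preserving operators (rather than with a naive operator norm) pays off.
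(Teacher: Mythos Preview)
Your proposal is correct and follows essentially the same route as the paper: the same three-block decomposition, the same rank-one approximation of each block via the corollary in Section~\ref{sec:Rank-one-operator}, and the same use of Lemma~\ref{prop:distanceProduit} to propagate the operator inequalities through the positive insertions $X$, $Y$. The only cosmetic difference is that the paper packages the cancellation as a single algebraic identity between the rank-one replacements, $(u,L_C L_B L_A v)(u,L_C Y L_B X L_A v)=(u,L_C L_B X L_A v)(u,L_C Y L_B L_A v)$, whereas you expand each pairing into its scalar factors and observe the cancellation term by term; these are equivalent.
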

One can use this theorem for $K_{1},$$K_{2}-K_{1},N-K_{2}$ large.
In this case, the Taylor expansion of $e^{x}$ gives 
\[
|\rho_{K_{1}}(X)\rho_{K_{2}}(Y)-\rho_{K_{1},K_{2}}(X,Y)|\leq16R(\kappa^{K_{1}}+\kappa^{K_{2}-K_{1}}+\kappa^{N-K_{2}})\|X\|\|Y\|,
\]
which decays exponentially.
\begin{proof}
Let us introduce $L_{K_{1}0}=z_{K_{1}0}l_{K_{1}0}$, $L_{K_{2}K_{1}}=z_{K_{2}K_{1}}l_{K_{2}K_{1}}$
and $L_{NK_{2}}=z_{NK_{2}}l_{NK_{2}}$ which are rank-one operators
such that 
\[
\begin{cases}
d_{\mathcal{P}}\big((T_{K_{1}}\text{\dots}T_{0}),L_{K_{1}0})\leq2\kappa^{K_{1}}R,\\
d_{\mathcal{P}}\big((T_{K_{2}}\text{\dots}T_{K_{1}}),L_{K_{2}K_{1}})\leq2\kappa^{K_{2}-K_{1}}R,\\
d_{\mathcal{P}}\big((T_{N}\text{\dots}T_{K_{2}}),L_{NK_{2}})\leq2\kappa^{N-K_{2}}R.
\end{cases}
\]
We then use Proposition \ref{prop:distanceProduit}, to obtain the
inequality for the partition function,
\[
d_{\mathcal{P}}(T_{N}\text{\dots}T_{0},L_{NK_{2}}L_{K_{2}K_{1}}L_{K_{1}0})\leq2R(\kappa^{N-K_{2}}+\kappa^{K_{2}-K_{1}}+\kappa^{K_{1}}),
\]
 for the density function
\begin{multline*}
d_{\mathcal{P}}(T_{N}\text{\dots}T_{K_{1}+1}XT_{K_{1}}\text{\dots}T_{0},L_{NK_{2}}L_{K_{2}K_{1}}XL_{K_{1}0})\leq2R(\kappa^{N-K_{2}}+\kappa^{K_{2}-K_{1}}+\kappa^{K_{1}})\\
d_{\mathcal{P}}(T_{N}\text{\dots}T_{K_{2}+1}YT_{K_{2}}\text{\dots}T_{0},L_{NK_{2}}YL_{K_{2}K_{1}}L_{K_{1}0})\leq2R(\kappa^{N-K_{2}}+\kappa^{K_{2}-K_{1}}+\kappa^{K_{1}}),
\end{multline*}
and the pair correlation function
\begin{align*}
 & d_{\mathcal{P}}(T_{N}\text{\dots}T_{K_{2}+1}YT_{K_{2}}\text{\dots}T_{K_{1}+1}XT_{K_{1}}\text{\dots}T_{0},L_{NK_{2}}YL_{K_{2}K_{1}}XL_{K_{1}0})\\
 & \qquad\leq2R(\kappa^{N-K_{2}}+\kappa^{K_{2}-K_{1}}+\kappa^{K_{1}}).
\end{align*}
Moreover we have 
\begin{align*}
 & (u,L_{NK_{2}}L_{K_{2}K_{1}}L_{K_{1}0}v)\cdot(u,L_{NK_{2}}YL_{K_{2}K_{1}}XL_{K_{1}0}v)\\
 & \qquad=(u,z_{NK_{2}})(l_{NK_{2}}z_{K_{2}K_{1}})(l_{K_{2}K_{1}}z_{K_{1}0})(l_{K_{1}0}v)(u,z_{NK_{2}})(l_{NK_{2}}Y(z_{K_{2}K_{1}}))\\
 & \qquad\quad(l_{K_{2}K_{1}}X(z_{K_{1}0}))(l_{K_{1}0}v)\\
 & \qquad=(u,z_{NK_{2}})(l_{NK_{2}}z_{K_{2}K_{1}})(l_{K_{2}K_{1}}X(z_{K_{1}0}))(l_{K_{1}0}v)(u,z_{NK_{2}})(l_{NK_{2}}Y(z_{K_{2}K_{1}}))\\
 & \qquad\quad(l_{K_{2}K_{1}}z_{K_{1}0})(l_{K_{1}0}v)\\
 & \qquad=(u,L_{NK_{2}}L_{K_{2}K_{1}}XL_{K_{1}0}v)\cdot(u,L_{NK_{2}}YL_{K_{2}K_{1}}L_{K_{1}0}v)
\end{align*}
and this allows us to conclude that 
\begin{align*}
 & Z^{2}\rho_{K_{1}}(X)\rho_{K_{2}}(Y)\\
 & \quad\leq(u,L_{NK_{2}}L_{K_{2}K_{1}}XL_{K_{1}0}v)\cdot(u,L_{NK_{2}}YL_{K_{2}K_{1}}L_{K_{1}0}v)e^{4R(\kappa^{N-K_{2}}+\kappa^{K_{2}-K_{1}}+\kappa^{K_{1}})}\\
 & \quad\leq(u,T_{N}\text{\dots}T_{K_{2}+1}XT_{K_{2}}\text{\dots}T_{K_{1}+1}XT_{K_{1}}\text{\dots}T_{0}v)\cdot(u,T_{N}\text{\dots}T_{0}v)\\
 & \quad\quad\times e^{8R(\kappa^{N-K_{2}}+\kappa^{K_{2}-K_{1}}+\kappa^{K_{1}})}\\
 & \quad\leq Z^{2}\rho_{K_{2},K_{1}}(Y,X)e^{8R(\kappa^{N-K_{2}}+\kappa^{K_{2}-K_{1}}+\kappa^{K_{1}})}.
\end{align*}
Finally, we have 
\[
Z^{2}\rho_{K_{1}}(X)\rho_{K_{2}}(Y)\geq Z^{2}\rho_{K_{2},K_{1}}(Y,X)e^{-8R(\kappa^{N-K_{2}}+\kappa^{K_{2}-K_{1}}+\kappa^{K_{1}})}.
\]

\end{proof}
The proof of the decay of the cluster correlation is the same. One
should just replace $X_{i}$ by $X_{i}=T_{i+l}Y_{i,l-1}\cdots T_{i+1}Y_{i,2}T_{i}Y_{i,1}$,
which are positive operators.

\subsection{Smoothness of the free energy \label{sec:Smoothness-of-the}}

In this section, we prove Proposition \ref{prop:NormEqui-1} and Theorem
\ref{thm:Smooth-1-1}.

\subsubsection{Proof of Proposition \ref{prop:NormEqui-1}}
\begin{proof}
Let $H$ a hyperplane such that $E=\text{Vect}(\text{\{}x_{0}\text{\}},H)$.
The projective space is locally isomorph to $H$. Let $B$ be the
convex set containing all the $s\in H$ for which there exist $(\alpha_{+},\alpha_{-},\beta_{-},\beta_{+})$
satisfying $\alpha_{+}x_{0}\leq x_{0}+s\leq\beta_{+}x_{0},\text{ with }\beta_{+}-\alpha_{+}\leq r$,
and $\alpha_{-}x_{0}\leq x_{0}-s\leq\beta_{-}x_{0},\text{ with }\beta_{-}-\alpha_{-}\leq r$,
for some $r$ small enough. This set is symmetric with respect to
the transformation $s\rightarrow-s$. Therefore, it is the ball of
the norm $\|s\|=r\cdot\inf(\lambda\in\mathbb{R},\frac{s}{\lambda}\in B))$.
Let us check that this norm is close to the distance. Let $s_{1},s_{2}\in H$
be such that $d(x_{0}+s_{1},x_{0})<r$ and $d(x_{0}+s_{2},x_{0})<r$.
We have then $\alpha_{1}x_{0}\leq x_{0}+s_{1}\leq\beta_{1}x_{0}$
and $\alpha_{2}x_{0}\leq x_{0}+s_{2}\leq\beta_{2}x_{0}$ and because
$r$ is very small, we can write $\alpha_{1}=1+\delta\alpha_{1}$,
$\alpha_{2}=1+\delta\alpha_{2}$, $\beta_{1}=1+\delta\beta_{1}$,
$\beta_{2}=1+\delta\beta_{2}$. At first order we have $d(x_{0}+s_{1},x_{0})=\delta\beta_{1}-\delta\alpha_{1}+o(|\delta\beta_{1}|,|\delta\alpha_{1}|)$
and $d(x_{0}+s_{2},x_{0})=\delta\beta_{2}-\delta\alpha_{2}+o(|\delta\beta_{2}|,|\delta\alpha_{2}|)$. 

We now check that $d(x_{0}+s_{1},x_{0})=(1+O(r))\|s_{1}\|$. First
we have 
\[
d(x_{0}+s_{1},x_{0})\geq(1+O(r))\|s_{1}\|.
\]
 Indeed, for any $\lambda\in\mathbb{R},$ we have $\lambda(1+\delta\alpha_{1})x_{0}+(1-\lambda)x_{0}\leq x_{0}+\lambda s\leq\lambda(1+\delta\beta_{1})x_{0}+(1-\lambda)x_{0}$
and $x_{0}+\lambda(\delta\alpha_{1})x_{0}\leq x_{0}+\lambda s\leq x_{0}+\lambda\delta\beta_{1}x_{0}$.
With $\lambda=\frac{r}{\delta\beta_{1}-\delta\alpha_{1}}$, we obtain
\[
x_{0}+\frac{r}{\delta\beta_{1}-\delta\alpha_{1}}(\delta\alpha_{1})x_{0}\leq x_{0}+\frac{r}{\delta\beta_{1}-\delta\alpha_{1}}s\leq x_{0}+\frac{r}{\delta\beta_{1}-\delta\alpha_{1}}\delta\beta_{1}x_{0}
\]
Therefore 
\[
\|s\|\leq\delta\beta_{1}-\delta\alpha_{1}=(1+O(r))d(x_{0},x_{0}+s).
\]
Then we claim that 
\[
d(x_{0}+s_{1},x_{0})\leq(1+O(r))\|s_{1}\|.
\]
Indeed let $\lambda$ be such that for any $\alpha,\beta$ $\alpha x_{0}\leq x_{0}+\frac{s}{\lambda}\leq\beta x_{0}$
$\Rightarrow\beta-\alpha\geq r$. Then for any $\alpha,\beta$ $\lambda\alpha x_{0}+(1-\lambda)x_{0}\leq x_{0}+s\leq\lambda\beta x_{0}+(1-\lambda)x_{0}$
$\Rightarrow\beta-\alpha\geq r$. Therefore
\begin{align*}
d(x_{0},x_{0}+s)\leq\log\frac{1-\lambda+\lambda\beta}{1-\lambda+\lambda\alpha} & =\log\frac{1+\lambda\delta\beta}{1+\lambda\delta\alpha}\\
 & =(\lambda(\delta\beta-\delta\alpha))(1+O(r))\leq\|s\|(1+O(r)).
\end{align*}
 We finally check that $d(x_{0}+s_{1},x_{0}+s_{2})=(1+O(r))d(x_{0}+s_{1}-s_{3},x_{0}+s_{2}-s_{3})$
for any $s_{1},s_{2},s_{3}\in B$. We have, $\alpha_{3}x_{0}\leq x_{0}+s_{3}\leq\beta_{3}x_{0}$,
and $d(x_{0}+s_{1},x_{0}+s_{2})=(\delta\beta-\delta\alpha)(1+O(r))$
with $(1+\delta\alpha)(x_{0}+s_{1})\leq x_{0}+s_{2}\leq(1+\delta\beta)(x_{0}+s_{1})$.
Then 
\[
(1+\delta\alpha+O(r)\delta\alpha)(x_{0}+s_{1}+s_{3})\leq(1+\delta\alpha)(x_{0}+s_{1}+s_{3})-\delta\alpha s_{3}\leq x_{0}+s_{2}+s_{3}
\]
and
\[
x_{0}+s_{2}+s_{3}\leq(1+\delta\beta)(x_{0}+s_{1}+s_{3})-\delta\alpha s_{3}\leq(1+\delta\beta+O(r))(x_{0}+s_{1}+s_{3}).
\]
We conclude that $d(x_{0}+s_{1},x_{0}+s_{2})=\|s_{2}-s_{1}\|(1+O(r))$.
\end{proof}
Let $\mathcal{C}$ be the cone of positive vectors in $\mathbb{R}^{n}$
and let $x_{0}=(x^{1},\cdots,x^{n})$ and $H=\text{\{}s:\sum_{i}s^{i}=0\text{\}}$.
Then in a neighborhood of $x_{0}$, we have 
\[
\alpha x_{0}\leq x_{0}+s\leq\beta x_{0}
\]
 with 
\[
\alpha=\max_{\alpha}\alpha x^{i}\leq x^{i}+s^{i}=1+\min\frac{s^{i}}{x^{i}}
\]
 and 
\[
\beta=\min_{\beta}\beta x^{i}\geq x^{i}+s^{i}=1+\max\frac{s^{i}}{x^{i}}.
\]
In addition 
\[
d(x_{0},x_{0}+s)=\log\frac{1+\max\frac{s^{i}}{x^{i}}}{1+\min\frac{s^{i}}{x^{i}}}\approx\max\frac{s^{i}}{x}-\min\frac{s^{i}}{x}=\max\frac{s^{i}}{x^{i}}+\max\frac{-s^{i}}{x^{i}}.
\]
Finally the constructed norm is then:\label{ExampleNorm}
\[
\|s_{1}-s_{2}\|=\max\frac{s_{1}^{i}-s_{2}^{i}}{x^{i}}+\max\frac{s_{2}^{i}-s_{2}^{i}}{x^{i}}.
\]
In order to prove Theorem \ref{thm:Smooth-1-1} we will need the following
lemma.
\begin{lem}
Let $E_{n}$ be Banach spaces with norms $\|\|_{n}$. Consider the
functions $u_{n}:\mathbb{R\rightarrow}E_{n}$ iteratively defined
by\label{LemCompositionFonctions}

\[
u_{n+1}(s)=g_{n}(s,u_{n}(s)),
\]
with $g_{n}(s,0)=0$ which are assumed to be uniformly contracting,
$\|\partial_{2}g_{n}\|_{\|\|_{n}\rightarrow\|\|_{n+1}}\leq\kappa$
with $\kappa<1$. If the $g_{n}$ are uniformly $C^{k}$ then the
$u_{n}$ are uniformly $C^{k}$.\end{lem}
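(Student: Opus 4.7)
The plan is to argue by induction on the order of differentiation. The key observation is that the recursion $u_{n+1}=g_n(\cdot,u_n)$ has, at every differentiation level, the structure
\[
u_{n+1}^{(k)}(s)=\partial_{2}g_{n}(s,u_{n}(s))\cdot u_{n}^{(k)}(s)+R_{n,k}(s),
\]
where $R_{n,k}$ involves only derivatives of $g_n$ and derivatives of $u_n$ of order strictly less than $k$ (this is what Fa\`a di Bruno gives). The uniform contraction hypothesis $\|\partial_{2}g_{n}\|\le\kappa<1$ then turns this into a stable linear recursion, exactly as in the $C^{0}$ case.

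First I would handle $k=0$. From $g_n(s,0)=0$ and $\|\partial_2 g_n\|\le\kappa$, the mean value inequality gives
\[
\|u_{n+1}(s)\|_{n+1}=\|g_{n}(s,u_{n}(s))-g_{n}(s,0)\|_{n+1}\le\kappa\,\|u_{n}(s)\|_{n},
\]
so $\|u_{n}(s)\|_{n}\le\kappa^{n}\|u_{0}(s)\|_{0}$, uniformly in $s$ on any bounded interval where $u_0$ is controlled. Next, for $k=1$, differentiating gives
\[
u_{n+1}'(s)=\partial_{1}g_{n}(s,u_{n}(s))+\partial_{2}g_{n}(s,u_{n}(s))\,u_{n}'(s),
\]
and writing $M_{1}=\sup_{n}\|\partial_{1}g_{n}\|$ (finite by the uniform $C^{1}$ assumption) one gets $\|u_{n+1}'\|_{n+1}\le M_{1}+\kappa\|u_{n}'\|_{n}$, hence $\|u_{n}'\|_{n}\le M_{1}/(1-\kappa)+\kappa^{n}\|u_{0}'\|_{0}$, uniformly bounded.

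For the inductive step at order $k$, assume $\sup_{n}\|u_{n}^{(j)}\|_{n}<\infty$ for all $j<k$. Differentiating the recursion $k$ times and collecting the one term in which $u_{n}$ is differentiated $k$ times (which arises from the chain rule applied only to the outermost $\partial_{2}g_{n}$ factor), every other term in the Fa\`a di Bruno expansion is a product of a derivative of $g_{n}$ (uniformly bounded by hypothesis, since the $g_n$ are uniformly $C^k$) with derivatives of $u_{n}$ of order at most $k-1$ (uniformly bounded by the induction hypothesis). Hence $R_{n,k}$ satisfies $\sup_{n}\|R_{n,k}\|_{n+1}\le C_{k}$, and the recursion
\[
\|u_{n+1}^{(k)}\|_{n+1}\le\kappa\,\|u_{n}^{(k)}\|_{n}+C_{k}
\]
yields $\sup_{n}\|u_{n}^{(k)}\|_{n}\le C_{k}/(1-\kappa)+\|u_{0}^{(k)}\|_{0}$, closing the induction.

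The only non-routine point is verifying that the Fa\`a di Bruno expansion really isolates a \emph{single} highest-order term $\partial_{2}g_{n}(s,u_{n}(s))\,u_{n}^{(k)}(s)$ with all remaining terms involving strictly lower derivatives of $u_n$; this is a combinatorial bookkeeping statement that follows directly from the chain rule applied to the composition $s\mapsto(s,u_n(s))\mapsto g_n(s,u_n(s))$. Everything else is the standard stability argument for a contractive affine recurrence, which is precisely what makes the uniform contraction hypothesis essential.
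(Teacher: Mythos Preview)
Your proof is correct and follows essentially the same approach as the paper: isolate the single top-order term $\partial_{2}g_{n}\cdot u_{n}^{(k)}$ in the Fa\`a di Bruno expansion, bound the remainder using the induction hypothesis and the uniform $C^{k}$ assumption on the $g_{n}$, and then close the resulting contractive affine recursion $\|u_{n+1}^{(k)}\|\le\kappa\|u_{n}^{(k)}\|+\tilde{C}_{k}$. Your version is in fact slightly more careful than the paper's, as you explicitly track the initial-data contribution $\kappa^{n}\|u_{0}^{(k)}\|_{0}$ in the final bound.
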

\begin{proof}
We prove by induction that there exists constant a $C_{k}$ such that
for all $n$, $\|\frac{d^{k}}{ds^{k}}u_{n}\|_{n}\leq C_{k}$. Computing
the derivative gives

\[
\frac{d^{k}}{ds^{k}}u_{n+1}=\partial_{2}g_{n}\cdot\frac{d^{k}}{ds^{k}}u_{n}+Q\Big(g_{n},(\partial_{1}^{r}\partial_{2}^{p}g_{n}),(\frac{d^{i}}{ds^{i}}u_{n})_{i<k}\Big)
\]
where $Q$ is a polynomial involving lower order derivatives of $u_{n}$
and the derivatives of $g_{n}$. Because of the induction hypothesis,
there exists $C_{k-1}$ such that for all $n$ and all $i<k$, $\|\frac{d^{i}}{ds^{i}}u_{n}\|\leq C_{k-1}$.
Therefore $Q$ can be uniformly bounded by a constant $\tilde{C}_{k}$
which depends only on $\sup_{n\in\mathbb{N}}\|\partial_{1}^{r}\partial_{2}^{p}g_{n}\|$
and $C_{k-1}$. We have therefore 

\[
\|\frac{d^{k}}{ds^{k}}u_{n+1}\|_{n+1}\leq\kappa\|\frac{d^{k}}{ds^{k}}u_{n}\|_{n}+\tilde{C_{k}}
\]
and we can then set 
\begin{equation}
C_{k}=\frac{1}{1-\kappa}\tilde{C_{k}}.\label{eq:ConstanteItere}
\end{equation}
We can now conclude because if $g$ is contracting for $d$ then it
is contracting for $\|\|$.\end{proof}
\begin{example}
Consider $T(\beta)=\begin{pmatrix}\beta & \epsilon\\
\epsilon & 1
\end{pmatrix}$, with largest eigenvalue 
\[
\lambda(\beta)=\frac{(\beta+1)+\sqrt{(\beta-1)^{2}+4\epsilon^{2}}}{2}
\]
and $\log(\lambda(\beta))$ for $\beta$ around $1$. In the usual
positive cone, 
\[
\Delta(T(\beta)(\mathcal{C}))=d_{\mathcal{C}}\left(\begin{pmatrix}1\\
\epsilon
\end{pmatrix},\begin{pmatrix}\epsilon\\
1
\end{pmatrix}\right)=|2\log(\epsilon)|.
\]
 The Birkhoff-Hopf theorem gives $\kappa=\tanh(\log(\epsilon)/2)\approx1-2\epsilon$.
Around the point $\begin{pmatrix}1\\
1
\end{pmatrix}$, the norm $\mathcal{N}$ is equal to the norm $\|.\|_{\infty}$ (see
example \ref{ExampleNorm}). The iterative formula (\ref{eq:ConstanteItere})
gives a constant behave like $C_{k}\approx(2\epsilon)^{(1-k)}$ and
this is what we get with the exact calculation of $\frac{d}{d\beta^{k}}[\log(\lambda(\beta))]$.\end{example}
\begin{rem}
If $T$ is contracting for the distance $d$, then $T$ is locally
contracting for $\mathcal{N}$. 
\end{rem}
We can now finish the proof of Theorem \ref{thm:Smooth-1-1}.
\begin{proof}
{[}Theorem \ref{thm:Smooth-1-1}{]}We denote 
\[
u_{n}(\beta)=\frac{\prod_{i=0}^{n-1}T_{i}(\beta)b}{\|\prod_{i=0}^{n-1}T_{i}(\beta)\|}
\]
 and we decompose the log of the product as 
\[
f_{N}(\beta)=\frac{1}{N}\log\left\langle a,\prod_{i=0}^{N-1}T_{i}(\beta)b\right\rangle =\frac{1}{N}\log\langle a,u_{N}(\beta)\rangle+\frac{1}{N}\sum\log(\|T_{i}(\beta)u_{i}(\beta)\|).
\]
Because the $T_{i}$ are smooth we only have to make sure that the
$u_{i}$ are smooth as well. This follows from the previous lemma.
The function $u_{i}(\beta)$ is smooth for the constructed norm $\|\|_{i}$.
But because the cone is normal, 
\[
\alpha x\leq y\leq\beta x\Rightarrow\|y-\frac{\beta+\alpha}{2}x\|\leq\frac{\beta-\alpha}{2}\|x\|,
\]
 we have then that $\|x\|\cdot\|x-y\|_{i}\geq\|x-y\|$ and we conclude
because $\|u_{n}(\beta)\|=1$. \end{proof}
\begin{prop}
Let $g_{1},g_{2},\cdots,g_{n}\cdots$, be analytic functions such
that for any $k$, $g_{k}(x)=\sum_{i}b_{k,n}x^{n}$ with $|b_{k,n}|\leq r^{n}$
and $|b_{k,0}|\leq1$. Let $f_{0}=g_{0}$ and $f_{k+1}=(1+\kappa g_{k+1}f_{k})$.
Then for any $k$, 
\[
f_{k}=\sum c_{k,n}x^{n}
\]
with $c_{k,n}\leq d_{n}$ where $d_{n}$ are the coefficient of the
Taylor expansion of $\frac{1-rx}{(1-\kappa)-rx}$. In particular,
if $f_{n}$ admits a limit $f_{\infty}$, then $f_{\infty}$ is analytic.\end{prop}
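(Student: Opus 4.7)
My plan is to apply the classical majorant (dominant) series method. First, I would observe that the hypotheses of the proposition say precisely that each $g_k$ is coefficient-wise dominated by $G(x) := \frac{1}{1-rx} = \sum_{n\geq 0} r^n x^n$: indeed $|b_{k,0}| \leq 1 = r^0$ and $|b_{k,n}| \leq r^n$ for $n \geq 1$. The crucial observation is to recognize the candidate envelope as a fixed point: since
\[
\frac{1-rx}{(1-\kappa)-rx} \;=\; \frac{1}{1-\kappa G(x)},
\]
the formal power series $D(x)=\sum_n d_n x^n$ is the unique solution of the functional equation $D = 1 + \kappa G D$.

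The core of the proof is then a single induction on $k$ aiming at $|c_{k,n}|\leq d_n$ for every $n$. Assuming the bound at step $k$, the Cauchy product formula applied to $f_{k+1} = 1 + \kappa g_{k+1} f_k$ gives
\[
|c_{k+1,n}| \;\leq\; \delta_{n,0} + \kappa\!\!\sum_{i+j=n}\!|b_{k+1,i}|\,|c_{k,j}| \;\leq\; \delta_{n,0} + \kappa\!\!\sum_{i+j=n}\! r^i d_j \;=\; [x^n]\bigl(1+\kappa G D\bigr) \;=\; d_n,
\]
so the induction step closes automatically thanks to the functional equation satisfied by $D$.

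The main obstacle I anticipate lies in the base case $k=0$, where one needs $|b_{0,n}|\leq d_n$. For $n=0$ this is the harmless inequality $|b_{0,0}|\leq 1\leq 1/(1-\kappa)$, but for $n\geq 1$ the required $r^n\leq \kappa r^n/(1-\kappa)^{n+1}$ can fail when $\kappa$ is small. To circumvent this I would work instead with the slightly larger majorant $\widetilde d_n := r^n/(1-\kappa)^{n+1}$, i.e.\ the Taylor coefficients of $1/((1-\kappa)-rx)$; these trivially dominate the initial data because $(1-\kappa)^{n+1}\leq 1$, and the identical computation using the functional equation $\widetilde D = (1-rx)^{-1} + \kappa G \widetilde D/(1-rx)\cdot\ldots$ (equivalently, $\widetilde D(1-\kappa G)=1/(1-rx)$) yields the uniform bound $|c_{k,n}|\leq \widetilde d_n$ without changing the radius of convergence $(1-\kappa)/r$.

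Once this uniform coefficient bound is established, the conclusion is immediate: if $f_n\to f_\infty$ (coefficient-wise, which is automatic from uniform convergence on a compact neighbourhood of the origin), then $|[x^n]f_\infty|\leq d_n$ (or $\widetilde d_n$), so $f_\infty$ is majorized by a function analytic on $|x|<(1-\kappa)/r$, and is therefore itself real analytic there. The only conceptually delicate point is the recognition that $D$ is the fixed point of the very same recursion that defines the $f_k$; after that the whole argument is a routine majorant comparison.
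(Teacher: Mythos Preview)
Your approach is essentially the same as the paper's: both are majorant (dominant) series arguments. The paper's proof is two lines: replace every $g_k$ by the extremal majorant $G(x)=1/(1-rx)$ (which can only increase each $c_{k,n}$, since all coefficients in the recursion are nonnegative), then observe that with $g_k\equiv G$ the recursion $f_{k+1}=1+\kappa G f_k$ produces the partial geometric sums $f_k=\sum_{i=0}^{k}(\kappa G)^i$, which are coefficient-wise dominated by the full series $\sum_{i\ge 0}(\kappa G)^i=(1-rx)/((1-\kappa)-rx)$. Your fixed-point identity $D=1+\kappa GD$ is exactly this geometric summation rewritten, and your induction is the partial-sum comparison spelled out step by step; so there is no genuinely different idea here, only a different packaging.

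You are right to flag the base case. As stated in the proposition, $f_0=g_0$ need not be dominated by $D$ (your computation $d_n=\kappa r^n/(1-\kappa)^{n+1}$ for $n\ge 1$ is correct, and this can be smaller than $r^n$). The paper's explicit formula $f_k=\sum_{i=0}^k(\kappa G)^i$ in fact presupposes $f_0=1$, so there is a small inconsistency between the statement and the proof in the paper. Your fix via the larger envelope $\widetilde D(x)=1/((1-\kappa)-rx)$ works: one checks $\widetilde D=G+\kappa G\widetilde D$, hence $1+\kappa G\widetilde D=\widetilde D+(1-G)$ has nonnegative-coefficient difference from $\widetilde D$, so the induction closes, and the radius of convergence $(1-\kappa)/r$ is unchanged. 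This is a cosmetic repair rather than a new argument.
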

\begin{proof}
We can assume $b_{k,n}=r^{n}$ for any $k,n$. Indeed another configuration
would give a smaller $c_{k,n}$. We expand $f_{k}$ and have: $f_{k}=\sum_{i=0}^{k}\big(\frac{\kappa}{1-rx}\big)^{i}$
whose coefficients are then smaller than those of $\sum_{i=0}^{\infty}\big(\frac{\kappa}{1-rx}\big)^{i}=\frac{1-rx}{(1-\kappa)-rx}.$\end{proof}
\begin{cor}
Let $g_{n}$ and $u_{n}$ be as in Proposition \ref{LemCompositionFonctions}.
Suppose that there exists $r\geq0$ such that $\frac{\|\partial_{s}^{i}g\|}{i\text{!}}\leq r^{i}$
for all $i$, then $u_{n}$ are analytic with coefficients of its
Taylor series bounded by that of $\frac{1-rx}{(1-\kappa)-rx}$. In
particular if $u_{n}$ admits a limit then it is analytic with convergence
radius $\frac{1-\kappa}{r}$.\end{cor}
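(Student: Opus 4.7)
The plan is to promote the uniform $C^k$ estimate from Lemma~\ref{LemCompositionFonctions} into an analytic estimate via a majorant-series argument, and to close it off by the preceding scalar proposition. For each $n$ and each base point in the domain of analyticity, associate to $u_n$ its norm-Taylor generating function
\[
\tilde{u}_n(x) := \sum_{k \geq 0} \frac{\|u_n^{(k)}\|_n}{k!}\, x^k,
\]
a formal power series with nonnegative coefficients. The goal is to majorize $\tilde u_n$ coefficient-wise by the Taylor expansion of $(1-rx)/((1-\kappa)-rx)$, uniformly in $n$.

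First I would derive a coefficient-wise recursion for $\tilde u_n$. Differentiating $u_{n+1}=g_n(s,u_n(s))$ exactly $k$ times via Fa\`a di Bruno reproduces the decomposition already written in the proof of Lemma~\ref{LemCompositionFonctions}: a linear leading term $\partial_2 g_n\cdot u_n^{(k)}$ plus lower-order polynomial contributions in the derivatives of $g_n$ and of $u_n^{(<k)}$. The contraction hypothesis $\|\partial_2 g_n\|\leq\kappa$ and the analyticity hypothesis $\|\partial_s^i g_n\|/i!\leq r^i$, together with $g_n(s,0)=0$ (which fixes the constant term), dominate the norm-Taylor coefficients of $u_{n+1}$ coefficient by coefficient by those produced by the scalar iteration of the preceding proposition with $g_k(x)=1/(1-rx)$.

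Next I would invoke that preceding proposition with this choice of $g_k$, for which $|b_{k,n}|\leq r^n$ and $|b_{k,0}|=1$ are trivially met. The proposition asserts that the iterates $f_{k+1}=1+\kappa g_{k+1}f_k$ have coefficients bounded by those of $(1-rx)/((1-\kappa)-rx)$. Transferring this through the coefficient-wise domination of the previous paragraph gives $\tilde u_n\preceq(1-rx)/((1-\kappa)-rx)$, so each $u_n$ is real analytic with radius of convergence at least $(1-\kappa)/r$, uniformly in $n$. If $u_{n_k}\to u_\infty$ along a subsequence, the uniform analytic bound together with the identification of Taylor coefficients as Cauchy integrals on any fixed disk strictly inside $(1-\kappa)/r$ shows that the majoration passes to $u_\infty$, which is therefore real analytic on the disk of radius $(1-\kappa)/r$.

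The main obstacle is the careful identification of the majorant recursion: the Fa\`a di Bruno expansion of $u_{n+1}^{(k)}/k!$ must be shown term by term to be dominated by the coefficients of the scalar iterate $f_{k+1}=1+\kappa g_{k+1}f_k$ appearing in the preceding proposition. This is a standard but somewhat fiddly piece of majorant bookkeeping, and it essentially reuses the polynomial structure $Q$ already identified in the proof of Lemma~\ref{LemCompositionFonctions}, promoting each uniform derivative bound into a sharp Cauchy-type coefficient bound.
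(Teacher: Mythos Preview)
Your approach is correct and is essentially the paper's: both are majorant-series arguments that feed into the preceding scalar proposition. The paper's proof is a single line --- it writes the recursion $u_n(s)-u_n(0)=g_n\bigl(s,u_{n-1}(s)-u_{n-1}(0)\bigr)+\bigl[g_n(s,u_{n-1}(0))-u_n(0)\bigr]$, which matches the form $f_{k+1}=1+\kappa g_{k+1}f_k$ directly and sidesteps the Fa\`a di Bruno bookkeeping you flag as the main obstacle.
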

\begin{proof}
This follows from the fact that 
\[
u_{n}(s)-u_{n}(0)=g_{n}(s,u_{n-1}(s)-u_{n-1}(0))+g_{n}(s,u_{n-1}(0))-u_{n}(0).
\]

\end{proof}

\subsection{Proof of Theorem \ref{Theorem(TCL)-1}}

\label{sect.TCL}We now prove the central limit theorem \ref{Theorem(TCL)-1}
from the regularity of the Laplace transform.
\begin{proof}
By Theorem \ref{TheoremAnalytic} $f(\alpha)$ is smooth with $\partial_{\alpha}f|_{\alpha=0}=\gamma\sqrt{N}\leq C\sqrt{N}$,
$\partial_{\alpha}^{2}[f-\gamma\sqrt{N}\alpha]|_{\alpha=0}=\sigma^{2}\leq C$
and $\partial_{\alpha}^{3}[f-\gamma\sqrt{N}\alpha]\leq\frac{C}{\sqrt{N}}.$
Then $(f(\alpha)-\gamma\alpha)=1+\frac{(\sigma\alpha)^{2}}{2}+O(\frac{1}{\sqrt{N}}).$
Therefore the Laplace transform is close to the one of a Gaussian
and we can conclude with the usual Berry Essen inequality. 
\end{proof}

\section{Proofs for the Jellium model}

\subsection{Proof for the classical Jellium model}

\label{sub:proofClass}We first write the partition function in the
form of products of operators. Recall that $U_{i}(s)=-2q_{i}\int_{\tilde{x}_{i}}^{\tilde{x_{i}}+s}(y-\tilde{x}_{i})\rho(y)dy$
with $\tilde{x_{i}}$ the equilibrium position of the particle $i$.
We note $\delta=\frac{1}{2}\min(|\tilde{a}_{i}-\tilde{a}_{i+1}|)$.
\begin{defn}
(Iterative operator) Let $T_{i}$ be the operator defined for any
function $f$ in $L^{1}$ or $L^{\infty}$ by 
\[
T_{i}f(x)=\int_{s=x-\tilde{x}_{i+1}+\tilde{x_{i}}}^{\infty}e^{-\beta U(s)}f(s)ds.
\]
In particular, we can rewrite the partition function as
\[
\mathcal{Z}_{N}(\beta)=e^{-\beta E(\tilde{x}_{1},\text{\dots},\tilde{x}_{N})}\left\langle 1_{x_{N}<L-\tilde{x}_{N}},\left(\prod_{i=0}^{N-1}T_{i}\right)1_{x_{1}>-L-\tilde{x}_{1}}\right\rangle 
\]

\end{defn}
We are then in the setting of Section \ref{sec:Decay-of-correlation}.

\subsubsection{Construction of a uniform invariant cone}

We first notice that we cannot directly apply the Birkhoff-Hopf Theorem
with the cone of positive functions $\mathcal{C}_{0}$. Indeed we
have the
\begin{rem}
For any $T_{i}$, $\Delta_{\mathcal{C}_{0}}(T_{i})=\infty$. For example
$\text{Supp}[T_{i}(1_{[0,1]})]=(-\infty;1+(\tilde{x}_{i+1}-\tilde{x}_{i})]$
and $\text{Supp}(T_{i}(1_{[2,3]}))=(-\infty,3+(\tilde{x}_{i+1}-\tilde{x}_{i})]$
and then we have for any $\alpha>0$ and $1+(\tilde{x}_{i+1}-\tilde{x}_{i})<t<3+(\tilde{x}_{i+1}-\tilde{x}_{i})$,
$\big(T_{i}(1_{[0,1]})-\alpha T_{i}(1_{[2,3]})\big)(t)<0$ so $\alpha_{\min}=0$. 
\end{rem}
The solution is to construct another cone. If we were restricted to
a bounded interval, then the simplest solution would be to consider
finite products of $T_{i}$, instead of one by one. The kernel of
$\prod_{i=n}^{n+k-1}T_{i}$ is strictly positive on $\text{\{}(x,y),y\geq x-2n\delta\text{\}}$,
and therefore with $n$ such that $2n\delta>2A$, the kernel is strictly
positive. $\prod T_{i}$ are then contracting for the cone $\text{\{}f\geq0\text{\}. }$

In our case, because of the multiplication by $e^{-U_{\text{i}}(s)}$,
we will be able to neglect the influence of $fe^{-U_{i}(s)}$ outside
$(-A,A)$. We choose $A$ such that 

\[
\int_{A}^{\infty}e^{-U_{i}(s)}ds\leq\frac{\delta}{2}e^{-U_{i}(A)}
\]
(for example, because of $\frac{d}{ds}U(s)\geq qms,$ we can choose
$A=\frac{2}{\delta qm}$). In Proposition \ref{prop:ConeConstruc}
we will define a cone such that $f$ can be slightly negative for
$\text{\{}x:x\geq A\text{\}}$. We also make it so that $T_{i}$ are
contracting and not only $\prod_{k}^{k+n}T_{i}$. The price to pay
is more restrictions. Intuitively, it is how $\prod T_{i}f$ looks
like for $f\geq0$.

Let us divide the interval $[-A,A]$ in small intervals with $I_{k}=[k\delta/2,(k+1)\delta/2]\text{ with }k\in\mathbb{\mathbb{Z}}\text{ and }-2\frac{A}{\delta}-1=k_{min}\leq k\leq k_{max}=2\frac{A}{\delta}+1$.
We suppose $\frac{A}{\delta}\in\mathbb{N}$ to simplify the notation. 
\begin{prop}
There exist $(\epsilon_{k})_{-2\frac{A}{\delta}\leq k\leq2\frac{A}{\delta}}$
such that the cone $\mathcal{C}$ defined by\label{prop:ConeConstruc}
\[
f\in\mathcal{C}\Leftrightarrow\begin{cases}
\forall t\geq A & f(t)+\epsilon I_{k_{max}}(f)\geq0,\\
\forall t\leq A & f(t)\geq0,\\
\text{on }-A\leq t\leq A & f\text{ is decreasing },\\
\forall t\leq-A & f(t)\geq f(-A),\\
\forall k\in[-2\frac{A}{\delta},2\frac{A}{\delta}] & I_{k-1}(f)\leq\frac{1}{\epsilon_{k}}I_{k}(f),\\
\forall t\leq-A & f(t)\leq\frac{1}{\epsilon'}I_{k_{min}}(f),
\end{cases}
\]
 satisfies that, for any $i$, $T_{i}$ is $d_{\mathcal{C}}$ contracting. 
\end{prop}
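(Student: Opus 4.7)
The plan is to invoke the Birkhoff--Hopf theorem (Theorem \ref{thm:(Birkhoff-Hopf-)-}), which reduces the claim that each $T_i$ is $d_{\mathcal{C}}$-contracting to two sub-claims: (i) $T_i(\mathcal{C})\subset\mathcal{C}$, so that $T_i$ is order-preserving; and (ii) $\Delta(T_i(\mathcal{C}))<\infty$, uniformly in $i$. The constants $\epsilon,\epsilon',(\epsilon_k)$ will be chosen so that (i) holds for every admissible $T_i$, and (ii) will then follow almost automatically because the defining conditions pin $T_i f$ to a family of functions that are uniformly comparable.

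For (i), the monotonicity conditions on $T_i f$ are immediate: writing $\tau_i=\tilde{x}_{i+1}-\tilde{x}_i\geq 2\delta$, the function $T_i f(x)=\int_{x-\tau_i}^{\infty}e^{-\beta U_i(s)}f(s)\,ds$ is non-increasing in $x$, hence decreasing on $[-A,A]$ and satisfies $T_i f(t)\geq T_i f(-A)$ for $t\leq -A$. Sign control is obtained by splitting the integral at $s=A$: for $x\leq A$ the contribution of $\{s\leq A\}$ is non-negative since $f\geq 0$ there, while the tail $\{s>A\}$ contributes at worst $-\epsilon I_{k_{\max}}(f)\cdot(\delta/2)e^{-\beta U_i(A)}$ by the prescribed bound on $\int_A^\infty e^{-\beta U_i}$, which is dominated by the positive bulk for $\epsilon$ small enough. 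The tail condition $T_i f(t)\leq \epsilon'^{-1}I_{k_{\min}}(T_i f)$ for $t\leq -A$ then holds once $\epsilon'$ is calibrated to the uniform bounds from (H1)--(H2).

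The delicate part of (i) are the interval ratios $I_{k-1}(T_i f)\leq \epsilon_k^{-1} I_k(T_i f)$. Writing
\[
I_k(T_i f)=\int_{k\delta/2}^{(k+1)\delta/2}\!\!\int_{x-\tau_i}^{\infty}e^{-\beta U_i(s)}f(s)\,ds\,dx
\]
and swapping integrals expresses $I_k(T_i f)$ as a weighted sum of the $I_j(f)$ with indices $j$ near $k+\lfloor 2\tau_i/\delta\rfloor$, the weights being uniformly bounded above and below on bounded intervals thanks to (H1)--(H2). Combined with the monotonicity of $f$ on $[-A,A]$, which gives $I_{k-1}(f)\geq I_k(f)$, this expresses $I_{k-1}(T_i f)/I_k(T_i f)$ in terms of the corresponding ratios for $f$, shifted by a fixed number of indices. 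One then defines $(\epsilon_k)$ inductively, starting from the rightmost index and moving leftward, so as to absorb all implicit constants; uniformity in $i$ is preserved because $\tau_i$ is bounded and $U_i$ enjoys uniform bounds from (H1)--(H2).

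For (ii), any $T_i f\in T_i(\mathcal{C})$ is a non-negative decreasing function on $[-A,A]$ with interval integrals obeying the fixed ratio bounds $\epsilon_k^{-1}$, plus a controlled small negative tail on $[A,\infty)$ and a bounded extension on $(-\infty,-A]$. Normalizing by $I_{k_{\min}}$, any two such functions are bounded above and below by fixed multiples of each other in each defining inequality, giving $\alpha T_i g\leq_{\mathcal{C}}T_i f\leq_{\mathcal{C}}\beta T_i g$ for constants $\alpha,\beta>0$ depending only on the cone data and (H1)--(H2). This yields $\Delta(T_i(\mathcal{C}))\leq \log(\beta/\alpha)<\infty$ uniformly, and Birkhoff--Hopf concludes. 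The main obstacle is the consistent inductive choice of the $(\epsilon_k)$: each defining condition of $\mathcal{C}$ imposes a constraint on $\epsilon_k$, and one must verify that these six constraints are simultaneously satisfiable by a single family, with constants uniform in $i$.
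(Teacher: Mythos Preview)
Your strategy matches the paper's: use Birkhoff--Hopf and verify invariance plus finite projective diameter. Two points, however, are not in order.

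First, the induction on the $(\epsilon_k)$ runs in the wrong direction. The operator is $T_i f(x)=\int_{x-\tau_i}^{\infty}e^{-\beta U_i}f$, a half-line integral; so $I_k(T_i f)$ does not depend only on $I_j(f)$ for $j$ ``near $k+\lfloor 2\tau_i/\delta\rfloor$'' but on the whole tail of $f$. The paper therefore starts from the left: it first fixes $\epsilon'$ so that $\sup_t f(t)\le (\epsilon')^{-1}I_{k_{\min}}(f)$ is preserved, and then, assuming the ratio conditions for all $k\le k_0$, obtains a bound $\sup_t f(t)\le b_{k_0}I_{k_0}(f)$ by chaining the inequalities. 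This global control is what lets you upper bound $I_{k_0}(T_i f)$ and lower bound $I_{k_0+1}(T_i f)$ in terms of the same quantity $I_{k_0}(f)$, after which a small linear lemma (the paper's Lemma~\ref{lem:TechniEps}) produces $\epsilon_{k_0+1}$. Going rightmost-to-leftward you never obtain the required $\sup$ control on $f$ over the relevant half-line, so the step does not close.

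Second, and more serious, your finite-diameter argument has a gap. Knowing only $T_i f\in\mathcal{C}$ is not enough to conclude $\alpha\,T_i g\le_{\mathcal{C}}T_i f\le_{\mathcal{C}}\beta\,T_i g$: comparability in $\mathcal{C}$ means the \emph{difference} $T_i f-\alpha T_i g$ must satisfy all six cone conditions, which is strictly stronger than any pointwise or interval-integral comparison. What the paper actually proves in step~(i) is that every defining inequality becomes \emph{strict} under $T_i$, with a uniform gap depending only on the parameters. Since any $g\in\mathcal{C}$ with $I_{k_{\max}}(g)=1$ has $\|g\|_{L^\infty}\le\prod_k\epsilon_k^{-1}$, this uniform interior margin yields an $\epsilon''>0$ such that $T_i(f-\epsilon''(\prod_k\epsilon_k)g)\in\mathcal{C}$ for all normalized $f,g\in\mathcal{C}$; that is the inequality $\alpha\,T_i g\le_{\mathcal{C}}T_i f$, and it gives $\Delta(T_i)\le 2\log\big(\epsilon''\prod_k\epsilon_k\big)^{-1}$. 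Your sketch omits this strictness and the passage from pointwise bounds to $\le_{\mathcal{C}}$-comparability, which is exactly where the argument lives.
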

This cone may seem a bit artificial, however it behaves nicely with
respect to the iteration of $T_{i}$. For the proof we need the following 
\begin{lem}
Let $y,x>0$, $K$ linear and $a,b,u,v\geq0$ such that $Kx\ge ax+uy$
and $Ky\leq by+vx$. If $a>b$ or $u>0$, then there exist $\epsilon>0$
such that if $\frac{1}{\epsilon}x\geq y$ then $\frac{1}{\epsilon}Kx>Ky$.
\label{lem:TechniEps}\end{lem}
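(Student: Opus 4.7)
The plan is to reduce the claim to an elementary algebraic inequality by applying the two linear bounds directly. Combining $\tfrac{1}{\epsilon} Kx \geq \tfrac{1}{\epsilon}(ax+uy)$ and $Ky \leq by + vx$, it suffices to choose $\epsilon > 0$ that simultaneously satisfies
\[
\frac{x}{\epsilon} \geq y \qquad \text{and} \qquad \epsilon\,(by + vx) < ax + uy.
\]
The first condition is exactly the hypothesis of the implication we are trying to establish, and clearly holds whenever $\epsilon \leq x/y$; the second, after multiplying through by $1/\epsilon$, gives the strict conclusion $\tfrac{1}{\epsilon}Kx > Ky$.

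The key observation is that the numerator $ax+uy$ is strictly positive under either branch of the alternative ``$a>b$ or $u>0$.'' In the branch $a>b$, since $b\geq 0$ we get $a>0$, and then $ax>0$ because $x>0$. In the branch $u>0$, we get $uy>0$ because $y>0$. In both cases $ax+uy>0$, while the nonnegative quantity $by+vx$ is finite. Consequently the inequality $\epsilon(by+vx) < ax+uy$ holds for every sufficiently small $\epsilon>0$ (and trivially if $by+vx=0$).

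It therefore suffices to take, for example,
\[
\epsilon \;=\; \tfrac{1}{2}\min\!\left(\frac{x}{y},\;\frac{ax+uy}{1+by+vx}\right),
\]
which is a strictly positive real number depending on the given data $x,y,a,b,u,v$. This $\epsilon$ secures both the hypothesis $y\leq x/\epsilon$ and the strict conclusion $\tfrac{1}{\epsilon}Kx>Ky$, finishing the argument. There is no essential obstacle here: the role of the alternative ``$a>b$ or $u>0$'' is precisely to prevent the lower bound $Kx \geq ax+uy$ from degenerating to the trivial $Kx\geq 0$, which would make the rescaled strict inequality unreachable; once this positivity is in hand, a single small choice of $\epsilon$ absorbs the $by+vx$ term on the other side.
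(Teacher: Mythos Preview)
Your argument is logically valid for the lemma exactly as written, but it takes a different route from the paper and in doing so loses the feature the lemma is really there to provide. You never actually use the hypothesis $y\le x/\epsilon$: you simply shrink $\epsilon$ until $\epsilon(by+vx)<ax+uy$, which forces the conclusion $\tfrac{1}{\epsilon}Kx>Ky$ outright, and then the implication holds vacuously. Consequently your $\epsilon$ depends on the particular values of $x$ and $y$ (it is essentially $(ax+uy)/(by+vx)$). The paper, by contrast, in the case $a>b$ \emph{feeds the hypothesis into the estimate}:
\[
Ky\;\le\; by+vx\;\le\;\Bigl(\tfrac{b}{\epsilon}+v\Bigr)x\;<\;\tfrac{a}{\epsilon}\,x\;\le\;\tfrac{1}{\epsilon}Kx,
\]
so that $\epsilon$ only has to satisfy $\tfrac{b}{\epsilon}+v<\tfrac{a}{\epsilon}$, i.e.\ $\epsilon<(a-b)/v$, a condition involving $a,b,v$ alone. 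That uniformity in $x,y$ is exactly what the subsequent cone construction exploits: there $x$ and $y$ stand for quantities like $I_{k}(f)$, $I_{k-1}(f)$ or $\sup f$ that range over all $f$ in the cone, while the constants $\epsilon_k,\epsilon'$ defining the cone must be fixed once and for all, independently of $f$. Your choice of $\epsilon$ would not deliver a single value working for every such $f$, so while it settles the stated lemma, it would not plug into the application without reworking.
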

\begin{proof}
If $a>b$, then for $\epsilon$ small enough, $\frac{b}{\epsilon}+v<\frac{a}{\epsilon}$
and we have $Ky\leq by+vx\leq(\frac{b}{\epsilon}+v)x<\frac{a}{\epsilon}x\leq\frac{1}{\epsilon}Kx$.
If $u>0,$ then we have $\frac{1}{\epsilon}Kx-Ky\geq(\frac{a}{\epsilon}-v)x-(b-\frac{u}{\epsilon})y>0$
for $\epsilon$ small enough.
\end{proof}
We can now carry on the proof of Proposition \ref{prop:ConeConstruc}.
\begin{proof}
We construct the $\epsilon_{k}$ recursively. Because $f$ is decreasing
and $e^{-U_{i}}$ are uniformly integrable, there exists $u$ such
that $\int_{-\alpha}^{\infty}fe^{-U_{i}(s)}ds\leq uI_{k_{min}}(f)$.
We then have 
\[
\sup T_{i}f=\int_{\mathbb{R}}e^{-U_{i}(s)}f(s)ds\leq\sup f\cdot\int_{-\infty}^{-A}e^{-U_{i}(s)}ds+uI_{k_{min}}(f).
\]
Moreover $I_{k_{min}}(Tf)\geq\delta e^{-U_{i}(-A)}I_{k_{min}}(Tf)$
and then $\int_{-\infty}^{-A}e^{-U_{i}(s)}ds<\delta e^{-U_{i}(-A)}$.
By Lemma \ref{lem:TechniEps} there exists $\epsilon'$ such that
for any $i$ and $t$ $,T_{i}f(t)<\frac{1}{\epsilon'}I_{k_{min}}(T_{i}f)$. 

Suppose we have constructed every $\epsilon_{k}$ up to $k=k_{0}$,
and let us construct $\epsilon_{k_{0}+1}$. Because of the induction
hypothesis there exist $b_{k_{0}}$ such that $\sup_{t}f(t)\leq b_{k_{0}}I_{k_{0}}(f)$
so there exists $b_{k_{0}}'$ such that $I_{k_{0}}(T_{i}f)\leq b_{k_{0}}'I_{k_{0}}(f)$.
Moreover

\begin{align*}
\forall a\in I_{k_{0}+1},\quad T_{i}f(a) & =\int_{a-\tilde{x}_{i+1}-\tilde{x}_{i}}^{\infty}f(s)e^{-U_{i}(s)}ds\\
 & \geq\int_{I_{k_{0}}}f(s)e^{-\max_{s\in I_{k_{0}}}U_{i}(s)}ds\\
 & \geq e^{-\max_{s\in I_{k_{0}}}U_{i}(s)}I_{k_{0}}(f).
\end{align*}
So thanks to Lemma \ref{lem:TechniEps}, there exists $\epsilon_{k_{0}+1}$
such that if for all $k\leq k_{0},I_{k}(f)\leq\frac{1}{\epsilon_{k+1}}I_{k+1}(f)$
then for all $k\leq k_{0}+1,I_{k}(T_{i}f)<\frac{1}{\epsilon_{k+1}}I_{k_{0}}(T_{i}f)$.
We also have for any $a\leq A$

$\begin{alignedat}{1}T_{i}f(a) & =\int_{a-\tilde{x}_{i+1}-\tilde{x}_{i}}^{\infty}f(s)e^{-U(s)}ds\\
 & \geq\int_{A-\delta}^{A}f(s)e^{-U_{i}(s)}ds-\epsilon\int_{A}^{\infty}e^{-U_{i}(s)}ds\cdot I_{k_{max}}(f)\\
 & \geq e^{-U_{i}(A)}(1-\epsilon\frac{\delta}{2})I_{k_{max}}(f).
\end{alignedat}
$ 

In particular $T_{i}f(a)\geq0$. Moreover for any $a>A$, we have
\begin{align*}
T_{i}f(a)\geq-\epsilon\int_{A}^{\infty}e^{-U_{i}(s)}ds\cdot I_{k_{max}}(f) & \geq-\epsilon\frac{\delta}{2}e^{-U_{i}(A)}I_{k_{max}}(f)\\
 & \geq-\epsilon\frac{1}{1-\epsilon\frac{\delta}{2}}I_{k_{max}}(Kf).
\end{align*}
Because $f\geq0$ on $(-\infty,A]$, $Tf$ is decreasing on $]-\infty,A+\delta]$.
To conclude, it will be enough to compare $T_{i}f$ with $T_{i}g$
for $f,g\in\mathcal{C}$ and $I_{k_{max}}(f)=I_{k_{max}}(g)=1$. Because
all the inequalities become strict, there exists $\epsilon''$ such
that for any $f\in\mathcal{C}$ with $I_{k_{max}}(f)=1$, if $\|g\|_{L^{\infty}}\leq\epsilon''$
then $T_{i}(f-g)\in\mathcal{C}$. Moreover for any $g\in\mathcal{C}$
with $I_{k_{max}}(g)=1$, $\|g\|_{L^{\infty}}\leq\prod\frac{1}{\epsilon_{k}}$.
So $T_{i}(f-\epsilon''\prod\epsilon_{k}g)\in\mathcal{C}$. And this
concludes the proof because then $\Delta\leq2\log(\epsilon''\prod\epsilon_{k})$.\end{proof}
\begin{rem}
If we denote by $\mathcal{H}_{k}$ the assertion $"I_{k-1}(f)\leq\frac{1}{\epsilon_{k}}I_{k}(f)"$,
we have actually proved that if $f$ satisfies all the condition of
$\mathcal{C}$ except $(\mathcal{H}_{i})_{i=r,\text{\dots},k_{max}}$,
then $T_{i}f$ satisfies all the condition of $\mathcal{C}$ except
$(\mathcal{H}_{i})_{i=r+1,\text{\dots},k_{max}}$. This implies that
if $f\geq0$ and $\text{supp}(f)\subset[-A,A]$, then $\prod_{i=k}^{k+n}T_{i}f\in\mathcal{C}$
is as wanted.
\end{rem}

\subsubsection{Decay of correlation.}

Here we prove Theorem \ref{Theo-decay-correlation-Jelium-Classic}.

We can carry on with the construction of conditions like $I_{k-1}(f)\leq\frac{1}{\epsilon_{k}}I_{k}(f)$
after $k_{max}$ in Proposition \ref{prop:ConeConstruc}. We denote
by $\mathcal{C}_{m}$ this more specified cone replacing $\forall k\in[-2\frac{A}{\delta},2\frac{A}{\delta}]I_{k-1}(f)\leq\frac{1}{\epsilon_{k}}I_{k}(f)$
by $\forall k\in[-2\frac{A}{\delta},2\frac{A}{\delta}+m]I_{k-1}(f)\leq\frac{1}{\epsilon_{k}}I_{k}(f)$.
We have then the 
\begin{prop}
If $g\in\mathcal{C}$ then $\prod_{i=k}^{k+n}T_{i}g\in\mathcal{C}_{i}.$\end{prop}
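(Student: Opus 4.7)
The plan is to prove this by induction on $n$, leveraging directly the recursive construction carried out in the proof of Proposition \ref{prop:ConeConstruc}. The base case $n=0$ reduces (up to the indexing convention identifying $\mathcal{C}_{0}$ with $\mathcal{C}$) to the statement of Proposition \ref{prop:ConeConstruc} itself, which already tells us that a single application of $T_{k}$ to $g\in\mathcal{C}$ produces a function again lying in $\mathcal{C}$ and, in view of the remark immediately preceding the proposition, satisfies the additional constraint $\mathcal{H}_{k_{max}+1}$. The inductive step will then assert that one further application of $T_{i}$ extends by one unit the maximal index for which $\mathcal{H}_{k}\colon I_{k-1}(f)\leq\epsilon_{k}^{-1}I_{k}(f)$ is valid.

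For the inductive step, assume $f:=\prod_{i=k}^{k+n-1}T_{i}g\in\mathcal{C}_{n-1}$. I would reproduce verbatim the recursive argument from the proof of Proposition \ref{prop:ConeConstruc} to construct $\epsilon_{k_{max}+n}$. The crucial point is that for any $a\in I_{k_{max}+n}$, the lower bound
\[
T_{k+n}f(a)\geq e^{-\max_{s\in I_{k_{max}+n-1}}U_{k+n}(s)}\,I_{k_{max}+n-1}(f)
\]
is purely local and does not use $a\leq A$; combined with the induction hypothesis, which via the chain of $\mathcal{H}_{j}$ conditions delivers an upper bound $I_{k_{max}+n-1}(T_{k+n}f)\leq b\,I_{k_{max}+n-1}(f)$ for some explicit constant $b$, Lemma \ref{lem:TechniEps} produces the desired $\epsilon_{k_{max}+n}>0$. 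All other cone conditions defining $\mathcal{C}_{n-1}$ (positivity on $(-\infty,A]$, the controlled negative tail on $[A,\infty)$ against $I_{k_{max}}$, monotonicity on $[-A,A]$, the plateau bound $f(t)\leq\epsilon'^{-1}I_{k_{min}}(f)$ for $t\leq -A$, etc.) are preserved by $T_{k+n}$ via the very same arguments already given in Proposition \ref{prop:ConeConstruc}, since those arguments never relied on the upper endpoint of the index range.

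The main obstacle is uniformity: one must ensure that the recursively defined constants $\epsilon_{k_{max}+n}$ do not degenerate to $0$ as $n\to\infty$, and that they can be chosen independently of the particle index $i$, so that the cones $\mathcal{C}_{n}$ are well defined once and for all. Here the quadratic-type growth $U_{i}(s)\geq qms^{2}$ together with $\partial_{s}U_{i}(s)\geq qms$ coming from (H1)--(H2) is the key ingredient: the factor $e^{-\max U_{i}}$ on the interval $I_{k_{max}+n-1}$ decays fast enough in $n$ to dominate the constant $b$, while the lower bound on $\partial_{s}U_{i}$ controls the ``mass transport'' constants uniformly in $i$. Once this uniformity is established, the induction closes and yields $\prod_{i=k}^{k+n}T_{i}g\in\mathcal{C}_{n}$ as claimed; as noted in the remark before the proposition, this implies in particular that finite products of $T_{i}$ applied to any compactly supported non-negative function eventually lie in a cone on which the Birkhoff--Hopf contraction mechanism becomes effective.
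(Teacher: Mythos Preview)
Your approach is essentially the paper's: induct on the number of factors, each application of a $T_i$ gaining one more condition $\mathcal{H}_{k_{max}+m}$ via the recursive mechanism of Proposition~\ref{prop:ConeConstruc} and Lemma~\ref{lem:TechniEps}, while the remaining cone conditions are preserved by the same computations. The paper's proof is terser but makes the same back-reference (``as previously'').

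One correction, though it does not break your argument: what you call ``the main obstacle'' is not one. The proposition is stated for \emph{fixed} $n$, so each $\epsilon_{k_{max}+m}$ only needs to be strictly positive, which Lemma~\ref{lem:TechniEps} already guarantees. In fact the $\epsilon_{k_{max}+m}$ \emph{do} degenerate as $m\to\infty$, since the lower-bound factor $e^{-\max_{I_{k_{max}+m-1}}U_i}$ in your estimate decays like $e^{-c m^{2}}$; your claim that the quadratic growth of $U_i$ prevents this is backwards. What genuinely matters---and what (H1)--(H2) provide---is uniformity in the particle index $i$, so that a single sequence $(\epsilon_k)_k$ serves for every $T_i$; you identify this correctly. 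The paper simply sidesteps the issue by noting that the construction of Proposition~\ref{prop:ConeConstruc} ``can be carried on'' past $k_{max}$, with no claim of uniformity in $m$.
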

\begin{proof}
By Proposition \ref{prop:ConeConstruc}, $\prod_{i=k}^{k+n}T_{i}g\in\mathcal{C}$.
Therefore it is enough to prove the remaining conditions. Let $g_{i}\in\mathcal{C}_{i}$.
As previously,

\[
T_{i}g(a)\geq-\epsilon\frac{\delta}{2}e^{-U_{i}(A+\delta i)}I_{k_{max}}(f)\geq-\epsilon\frac{1}{1-\epsilon\frac{\delta}{2}}I_{k_{max}+i}(Tf).
\]
As a consequence we have that for $f\geq0$ and $\text{supp}(f)\in(-\infty,A+n\delta]$
then $f\prod_{i=k}^{k+n}T_{i}g\geq0$ for all $g\in\mathcal{C}$. \end{proof}
\begin{rem}
\label{Remark-:-iterPositif}If $f\geq0$ and $\text{supp}(f)\in[A,\infty)$
then $T_{i}f\in\mathcal{C}$.Therefore for $f$, $\text{supp}(f)\in[a-dl,a-dl]$
$dl\leq\delta$ and $A\leq a\leq A+n\delta$ then $T_{n+k+1}F\prod_{i=k}^{k+n}T_{i}$
is order preserving for the cone $\mathcal{C}$. \end{rem}
\begin{prop}
For $f\geq0$ with $\text{supp}(f)\in[a-dl,a-dl]$ $dl\leq\delta$
and $a\geq A+n\delta$, we have 
\[
\left\langle 1,\prod_{i=k}^{k+n}T_{i}f\right\rangle \leq e^{-n(|\frac{a}{\delta}|^{2}-c)}\langle1,f\rangle\left\langle 1,\prod_{i=k}^{k+n}T_{i}1\right\rangle .
\]
By iteration $\text{supp}(\prod_{i}Tf)\in[a-n\max(\tilde{a}_{i}-\tilde{a}_{i+1}),\infty)$.
Therefore
\begin{align*}
 & \|\prod T_{i}f\|_{\infty}\leq\langle1,f\rangle e^{-\sum\min_{i}(U_{i}(a-k(\max(\tilde{a}_{i}-\tilde{a}_{i+1}))))}\\
 & \qquad=\langle1,f\rangle e^{-\gamma_{min}\sum(a-k(\max(\tilde{a}_{i}-\tilde{a}_{i+1}))^{2}}.
\end{align*}
\end{prop}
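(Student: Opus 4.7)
The proposition contains two distinct estimates. The first compares $\langle 1, \prod T_i f\rangle$ with the ``free'' partition function $\langle 1, \prod T_i 1\rangle$, showing exponential suppression when $f$ is concentrated far from equilibrium. The second is a pointwise (and absolute) bound on $\prod T_i f$.

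For the comparison estimate, I would extract a Boltzmann suppression step by step. A single application of $T_i$ to an $f \geq 0$ supported in $[a-dl,a]$ gives, using that $U_i$ is non-decreasing on $[0,\infty)$ with $U_i(s)\geq qms^2$,
\[
T_i f(x)\;\leq\; e^{-\beta U_i(a-dl)}\,\langle 1,f\rangle\cdot \mathbf{1}_{\{x\leq a+\Delta_i\}},
\]
so that the support of $T_i f$ is capped at $a+\Delta_i$ and its value is suppressed by $e^{-\beta qm a^2}$. Iterating, I would represent the kernel of $\prod_{i=k}^{k+n}T_i$ as a path integral over $s_0,\dots,s_n$ with the constraints $s_i \geq s_{i-1}-\Delta_{k+i-1}$ coming from the indicators. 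When $s_0 \in \mathrm{supp}(f)$, these constraints force $s_i \geq a-dl-i\bar\Delta > 0$ by the hypothesis $a \geq A + n\delta$, so each weight satisfies $e^{-\beta U_i(s_i)} \leq e^{-\beta qm (a-dl-i\bar\Delta)^2}$.

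For the denominator $\prod T_i 1$, no such constraint on the path applies: paths can visit $s_i\approx 0$ where $U_i$ is of order $QMs^2$ and thus small. A local contribution argument (integrating over $|s_i|\leq \delta/2$) yields $\prod T_i 1(x) \geq c^n$ uniformly on a fixed range. Taking the ratio produces the claimed bound; the crude exponent $e^{-n(a^2/\delta^2 - c)}$ corresponds to applying the worst-step estimate $n$ times, with $c$ absorbing the log of the uniform lower bound on $\prod T_i 1$ together with geometric constants. The second (sharper) estimate then follows by iterating the pointwise single-step bound and tracking the support: after $k$ steps, the effective support sits near $a - k\max\Delta_i$, giving the sum $\sum_k (a - k\max\Delta_i)^2$ of squared displacements in the exponent.

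The main technical obstacle I foresee is keeping separate the two aspects of $T_i$'s action: even though $\mathrm{supp}(T_i\phi)$ extends arbitrarily far to the left, the pointwise bound only charges the exponential factor associated with the upper edge of the support, where $e^{-\beta U_i(s)}$ dominates the integrand. Propagating this correctly through $n$ iterations, and then producing a uniform lower bound for $\prod T_i 1$ compatible with the normalization $\langle 1,\prod T_i 1\rangle$, is the delicate step. Everything else is just careful bookkeeping of the exponents $U_i(a-k\bar\Delta)\geq qm(a-k\bar\Delta)^2$ along the shifted support.
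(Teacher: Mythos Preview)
Your approach is essentially the same as the paper's. The paper's proof is a single line: it asserts the existence of $\lambda>0$ with $\langle 1,\prod_{i=k}^{k+n-1}T_i 1\rangle \geq \lambda^n$, sets $c=\log\lambda$, and declares that the result follows---leaving the numerator estimate and the support/pointwise bounds to be read off directly from the iteration already indicated in the statement. Your plan spells out exactly these implicit steps (the path-constraint forcing $s_i\gtrsim a-i\bar\Delta$ for the numerator, the local contribution giving $\prod T_i 1\geq c^n$ for the denominator), so there is no substantive difference in strategy.
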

\begin{proof}
There exists $\lambda>0$ such that $\langle1,\prod_{i=k}^{k+n-1}T_{i}1\rangle\geq\lambda^{n}$,
and we set $c=\log\lambda$. The result follows.
\end{proof}
We are now ready to prove the decay of the correlation functions.
Recall that for $i_{1},i_{2},\cdots,i_{k}$, we have the $k$-th marginal
defined by

\begin{align*}
 & \rho_{k}(x_{i_{1}},x_{i_{2}},\text{\dots},x_{i_{k}})=\frac{1}{\mathcal{Z}_{N}(\beta)}e^{-\beta E(\tilde{x}_{1},\text{\dots},\tilde{x}_{N})}\\
 & \quad\idotsint_{-L<x_{1}<x_{2}<\text{\dots}<x_{N}<L}\prod e^{-2\beta q_{i}\int_{\tilde{x}_{i}}^{x_{i}}\rho(y)(y-x_{i})dy}\prod_{i\neq i_{1},i_{2},...,i_{N}}dx_{i}.
\end{align*}
Note that there exists $r$ such that for $\rho_{k}(x_{i_{1}},x_{i_{2}},\text{\dots},x_{i_{k}})\leq e^{-r\max|x|^{3}}.$
\begin{cor}
There exists $\kappa<1$ and $C_{k}>0$ such that 

\[
|\rho_{k}(x_{i_{1}},x_{i_{2}},\text{\dots},x_{i_{k}})-\prod_{l=1}^{k}\rho_{1}(x_{i_{l}})|\leq C_{k}\kappa^{\inf|i_{l}-i_{l+1}|}.
\]
\end{cor}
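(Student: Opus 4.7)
The plan is to identify this corollary as the Jellium specialization of the abstract decay-of-correlations Theorem \ref{theo-decay-of-correlation}, applied to the cone $\mathcal{C}$ constructed in Proposition \ref{prop:ConeConstruc}.

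First, I would rewrite the $k$-point marginal in the transfer-operator form of Section \ref{sec:BHT}: up to the overall factor $e^{-\beta E(\tilde x)}/\mathcal{Z}_N$,
$$\rho_k(x_{i_1},\dots,x_{i_k}) \;=\; \frac{1}{\mathcal{Z}_N}\bigl\langle a, T_{N-1}\cdots T_{i_k}\,M_{i_k}\,T_{i_k-1}\cdots T_{i_1}\,M_{i_1}\,T_{i_1-1}\cdots T_0\, b\bigr\rangle,$$
where $a,b$ are the boundary indicators $1_{x_N<L-\tilde x_N}$ and $1_{x_1>-L-\tilde x_1}$ and each $M_{i_j}$ is the evaluation-at-$x_{i_j}$ operator (equivalently, the limit of $|A|^{-1}1_A$ for small intervals $A \ni x_{i_j}$). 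This is the $k$-point correlation function of Definition in Section \ref{sec:BHT}, with $K_j = i_j$.

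Second, by Proposition \ref{prop:ConeConstruc} there exists a cone $\mathcal{C}$ on which $\Delta(T_i) \leq M < \infty$ uniformly in $i$, so Birkhoff-Hopf gives a uniform contraction factor $\kappa = \tanh(M/4) < 1$. Applying Theorem \ref{theo-decay-of-correlation} with bounded test operators $X_j = |A_j|^{-1}1_{A_j}$ and passing to the pointwise limit $|A_j| \to 0$ (justified by the smoothness of $\rho_k$ in the classical case, where the integrand of the partition function is $C^\infty$), I would obtain the multiplicative inequality
$$\Bigl|\rho_k(x_{i_1},\dots,x_{i_k}) - \prod_{l=1}^k \rho_1(x_{i_l})\Bigr| \;\leq\; C \kappa^{d} \prod_{l=1}^k \rho_1(x_{i_l}), \qquad d = \min_{l} |i_{l+1}-i_l|.$$
The a priori Gaussian-type bound $\rho_k(x_{i_1},\dots,x_{i_k}) \leq e^{-r\max|x|^3}$ recalled just before the corollary (in particular $\|\rho_1\|_\infty < \infty$) then allows the final product to be absorbed into a constant, giving the stated additive bound with $C_k := C\|\rho_1\|_\infty^k$.

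The main obstacle is that the boundary test vectors $a,b$ and the initial/terminal segments of the product are not in general well-placed inside the strictly contracted interior of $\mathcal{C}$: the abstract theorem needs all iterates to land in $\mathcal{C}$ so that the Hilbert-metric contraction applies. I would resolve this via Remark \ref{Remark-:-iterPositif}, which says that after finitely many applications of the $T_i$ to a positive compactly supported test function one lands in $\mathcal{C}$. The first few and last few factors of the product are thus split off and produce a fixed multiplicative constant, independent of $N$ and of the positions $i_1,\dots,i_k$, which is absorbed in $C_k$; the remaining factors are governed by the uniform Birkhoff-Hopf contraction. The secondary subtlety, namely the delta-type nature of the $M_{i_j}$, is handled by the $|A_j|^{-1}1_{A_j}$ approximation together with the continuity of $\rho_k$.
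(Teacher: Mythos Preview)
Your overall strategy is the right one, but there is a genuine gap in the middle of the argument, and it is precisely the point where the paper's proof does real work.

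You apply Theorem~\ref{theo-decay-of-correlation} with test operators $X_j=|A_j|^{-1}1_{A_j}$. That theorem, however, requires the $X_j$ to be \emph{order-preserving for the cone~$\mathcal{C}$} of Proposition~\ref{prop:ConeConstruc}, not merely nonnegative multiplication operators. The cone~$\mathcal{C}$ is not the cone of positive functions (on which the $T_i$ are \emph{not} contracting, as the paper notes); it imposes monotonicity on $[-A,A]$ and the chain of inequalities $I_{k-1}(f)\le\epsilon_k^{-1}I_k(f)$. Multiplication by $1_{A_j}$ destroys these conditions, so $X_j(\mathcal{C})\not\subset\mathcal{C}$ and the abstract theorem does not apply as stated. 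The obstacle you flag is attached to the wrong place: it is not the boundary vectors $a,b$ that cause trouble, but the intermediate test operators.

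The paper's fix is exactly the mechanism you quote (Remark~\ref{Remark-:-iterPositif}), but applied at each insertion point: one replaces $\delta^{(n)}_{x_l}$ by the \emph{composite} $X_l=T_{K_l+m}\cdots T_{K_l+1}\,\delta^{(n)}_{x_l}\,T_{K_l}$, which \emph{is} $\mathcal{C}$-positive once $m$ is large enough that the support of $\delta^{(n)}_{x_l}$ lies in $[-A,A+m\delta]$. The cost is that $m$ depends on $|x_l|$, so one cannot take a fixed $m$ uniformly in the $x_l$'s. This forces the dichotomy in the paper's proof: if every $|x_l|\le\epsilon\,\inf_l|i_{l+1}-i_l|\,\delta$, then $m=\epsilon\,\inf_l|i_{l+1}-i_l|$ suffices and one loses only a factor $\kappa^{-m}$, absorbed by replacing $\kappa$ with $\kappa^{1-\epsilon}$; if some $|x_l|$ exceeds this threshold, one abandons the contraction argument entirely and uses the a~priori bound $\rho_k\le e^{-r\max|x|^3}$ to get the required exponential decay directly. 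Your use of the a~priori bound only to control $\prod\rho_1(x_{i_l})$ misses this second, essential role.
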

\begin{proof}
Let $x_{i_{1}},\cdots,x_{i_{k}}\in\mathbb{R}^{k}$ and let $\delta^{(n)}$
be an approximation of the Dirac $\delta_{0}.$ We evaluate

\begin{align*}
 & \iiiint[\rho_{k}(y_{i_{1}},y_{i_{2}},\text{\dots},y_{i_{k}})-\prod\rho_{1}(y_{i_{1}})]\prod\delta^{(n)}(y_{i_{k}}-x_{i_{k}})dy_{i_{k}}\\
 & \quad=\iiiint[\rho_{k}(y_{i_{1}},y_{i_{2}},\text{\dots},y_{i_{k}})-\prod\rho_{1}(y_{i_{1}})]\prod\delta_{x_{i_{k}}}^{(n)}(y_{i_{k}})dy_{i_{k}}
\end{align*}
 which, in our formalism, is equal to

\begin{align*}
 & \frac{1}{\mathcal{Z}_{N}(\beta)}(u,T_{N}\text{\dots}T_{K_{k}+1}\delta_{x_{k}}^{(n)}T_{K_{k}}\text{\dots}T_{K_{1}+1}\delta_{1}^{(n)}T_{K_{1}}\text{\dots}T_{0}v)\\
 & \quad-\prod\frac{1}{\text{Z}}(u,T_{N}\text{\dots}T_{K_{1}+1}\delta_{x_{k}}^{(n)}T_{K_{1}}\text{\dots}T_{0}v).
\end{align*}

To begin with, assume $-A<x_{1},\cdots,x_{k}<A$. Because of Remark
\ref{Remark-:-iterPositif}, for any $i\in[1,k],$ $T_{K_{k}+m}\text{\dots}T_{K_{k}+1}\delta_{x_{k}}^{(n)}T_{K_{k}}$
is a positive operator. Changing $C_{k}$, we can suppose $\inf(|i_{l}-i_{l+1}|)>m$.
Therefore, denoting $X_{l}=T_{K_{l}+m}\text{\dots}T_{K_{l}+1}\delta_{x_{l}}^{(n)}T_{K_{l}}$
, we can apply Theorem \ref{thm:TwoPointCorelation} and we obtain
:

\begin{align*}
 & \big|\frac{1}{\mathcal{Z}_{N}(\beta)}(u,T_{N}\text{\dots}T_{K_{k}+1}\delta_{x_{k}}^{(n)}T_{K_{k}}\text{\dots}T_{K_{1}+1}\delta_{1}^{(n)}T_{K_{1}}\text{\dots}T_{0}v)\\
 & -\prod\frac{1}{\text{Z}}(u,T_{N}\text{\dots}T_{K_{1}+1}\delta_{x_{k}}^{(n)}T_{K_{1}}\text{\dots}T_{0}v)\big|\\
 & \quad\leq C_{k}\kappa^{\inf(i_{l}-i_{l+1})},
\end{align*}
 where $C_{k}=2k(2k+1)R\kappa^{-m}$ if $\inf(i_{l}-i_{l+1})$ is
larger that a constant $c$. Suppose that there exist $|x_{i}|>\epsilon\inf(|i_{l}-i_{l+1}|\delta)$
then $\rho_{k}(x_{i_{1}},x_{i_{2}},\text{\dots},x_{i_{k}})\leq e^{-r(\epsilon\delta)^{2}\inf_{l}|i_{l}-i_{l+1}|^{2}}$
and we are done. If for all $i,|x_{i}|\leq\epsilon\inf(|i_{l}-i_{l+1}|\delta)$,
then as previously $T_{K_{k}+m}\text{\dots}T_{K_{k}+1}\delta_{x_{k}}^{(n)}T_{K_{k}}$
is positive for $m=\epsilon\inf|i_{l}-i_{l+1}|$, hence 
\begin{align*}
 & |\frac{1}{\mathcal{Z}_{N}(\beta)}(u,T_{N}\text{\dots}T_{K_{k}+1}\delta_{x_{k}}^{(n)}T_{K_{k}}\text{\dots}T_{K_{1}+1}\delta_{1}^{(n)}T_{K_{1}}\text{\dots}T_{0}v)\\
 & -\prod\frac{1}{\text{Z}}(u,T_{N}\text{\dots}T_{K_{1}+1}\delta_{x_{k}}^{(n)}T_{K_{1}}\text{\dots}T_{0}v)|\\
 & \quad\leq C'_{k}\kappa^{(1-\epsilon)\inf(i_{l}-i_{l+1})}
\end{align*}
with $C_{k}=2k(2k+1)R$. We can conclude replacing $\kappa$ by $\kappa^{(1-\epsilon)}$.
\end{proof}

\subsubsection{Smoothness of the free energy for the classical Jellium model.}

Now we use Theorem \ref{thm:Smooth-1-1} to prove the smoothness of
the free energy. We first have to check its hypothesis. This is the
aim of the following proposition
\begin{prop}
\label{prop:SmoothProof}Let $u_{0}\in T_{i}(\mathcal{C})$. Then
\[
\|[\partial_{\beta}^{n}T_{i}(\beta)]\|_{\mathcal{N}_{i}\rightarrow\mathcal{N}_{i+1}}<\infty
\]
for all $n$, where $\mathcal{N}_{i}$ and $\mathcal{N}_{i+1}$ are
the norm constructed in Proposition \ref{prop:NormEqui-1} around
$u_{0}$ and $T_{i}(\text{\ensuremath{\beta}})u_{0}$ respectively,
and
\[
\mathcal{N}(\partial_{\beta}^{n}T_{i}(\beta)u_{0})<\infty.
\]

\end{prop}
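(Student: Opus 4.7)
The plan is to use the explicit integral structure of $T_i(\beta)$ and reduce the derivative estimates to cone comparisons for the Jellium operator at a slightly shifted inverse temperature. Writing
\[
\partial_\beta^n T_i(\beta) f(x) = \int_{x - (\tilde{x}_{i+1} - \tilde{x}_i)}^{\infty} (-U_i(s))^n e^{-\beta U_i(s)} f(s)\, ds
\]
and using the elementary inequality $t^n e^{-\beta t} \leq C_n(\eta)\, e^{-(\beta - \eta) t}$ valid for any $\eta \in (0,\beta)$, one obtains the pointwise majoration $|\partial_\beta^n T_i(\beta) f| \leq C_n(\eta)\, T_i^{(\beta - \eta)}|f|$, where $T_i^{(\beta')}$ denotes the Jellium transfer operator at inverse temperature $\beta'$. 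All subsequent estimates will be reduced to comparing $T_i^{(\beta - \eta)}$ with $T_i^{(\beta)}$ in the cone $\mathcal{C}$.

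To prove the second assertion $\mathcal{N}(\partial_\beta^n T_i(\beta) u_0) < \infty$, I would combine the majoration above with $u_0 \in T_i(\mathcal{C}) \subset \mathcal{C}$ and establish cone comparability $c\, T_i(\beta) u_0 \leq T_i^{(\beta - \eta)} u_0 \leq C\, T_i(\beta) u_0$ for constants $0 < c \leq C$. Applied to $\pm u_0$, this yields $-C\, C_n(\eta)\, T_i u_0 \leq \partial_\beta^n T_i(\beta) u_0 \leq C\, C_n(\eta)\, T_i u_0$; finiteness of the constructed norm follows from the local equivalence of $\mathcal{N}$ with the Hilbert distance (Proposition \ref{prop:NormEqui-1}).

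For the operator norm, I would use that by construction $\mathcal{N}_i$ is equivalent to the order-bound norm around $u_0$: if $\mathcal{N}_i(h) \leq r$ then (after the appropriate hyperplane normalization) $h$ is sandwiched in $\mathcal{C}$ as $-\gamma r\, u_0 \leq h \leq \gamma r\, u_0$ for some constant $\gamma$. The positive-kernel majoration propagates this to $\partial_\beta^n T_i(\beta) h$, which is then sandwiched between multiples of $T_i^{(\beta - \eta)} u_0$, and the cone comparison above converts this into a sandwiching by multiples of $T_i u_0$, yielding $\mathcal{N}_{i+1}(\partial_\beta^n T_i(\beta) h) \leq C' \mathcal{N}_i(h)$ for a constant $C'$ independent of $h$.

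The main obstacle is the cone comparison $c\, T_i(\beta) u_0 \leq T_i^{(\beta - \eta)} u_0 \leq C\, T_i(\beta) u_0$: one must reverify each of the conditions defining $\mathcal{C}$ in Proposition \ref{prop:ConeConstruc} (monotonicity on $[-A,A]$, signed tail condition, interval ratio bounds $I_{k-1}/I_k$) for both outputs and control their ratio. However, replacing $\beta$ by $\beta - \eta$ amounts only to multiplying the integrand by the smooth positive factor $e^{\eta U_i(s)}$, which is uniformly integrable against any element of $\mathcal{C}$ thanks to the quadratic lower bound $U_i(s) \geq qms^2$ from hypotheses (H1)--(H2). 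This reduces each verification to the same integrability estimates already implicit in the construction of the cone, with only quantitative changes in the constants.
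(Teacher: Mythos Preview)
Your argument has a genuine gap at the step where you pass from the pointwise majoration
\[
|\partial_\beta^n T_i(\beta)h|(x)\leq C_n(\eta)\,T_i^{(\beta-\eta)}|h|(x)
\]
to a $\mathcal{C}$-order sandwich of $\partial_\beta^n T_i(\beta)h$ between multiples of $T_i u_0$. The cone $\mathcal{C}$ of Proposition~\ref{prop:ConeConstruc} is \emph{not} the cone of nonnegative functions: membership also requires monotonicity on $[-A,A]$, the ratio conditions $I_{k-1}(f)\leq \epsilon_k^{-1}I_k(f)$, and the two tail conditions. Consequently, a pointwise bound $|v(t)|\leq w(t)$ does \emph{not} give $-w\leq_{\mathcal C} v\leq_{\mathcal C} w$, and your ``positive-kernel majoration propagates the sandwich'' step fails as stated. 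The same difficulty blocks the comparison $c\,T_i(\beta)u_0\leq_{\mathcal C} T_i^{(\beta-\eta)}u_0\leq_{\mathcal C} C\,T_i(\beta)u_0$: you would have to check that the \emph{differences} $T_i^{(\beta-\eta)}u_0-c\,T_i(\beta)u_0$ and $C\,T_i(\beta)u_0-T_i^{(\beta-\eta)}u_0$ are again decreasing on $[-A,A]$ and satisfy the $I_k$-ratio bounds, which is not a consequence of integrability alone.

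The paper deals with exactly this obstruction by introducing an explicit auxiliary norm $\|\cdot\|_\nu$ built directly out of the six defining inequalities of $\mathcal{C}$, and proving $\|\cdot\|_{i+1}\leq \|\cdot\|_\nu$ (this uses that $u_0\in T_i(\mathcal C)$ makes all the cone constraints \emph{strictly} satisfied with a uniform margin $\epsilon_0$). One then only needs $L^\infty$ bounds on $\partial_\beta^n T_i(\beta)w$ and on its derivative on $[-A,A]$, together with the $I_k$-contributions, to control $\|\partial_\beta^n T_i(\beta)w\|_\nu$; these follow from the pointwise information you already extracted from $\mathcal N_i(w)\leq 1$. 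Your kernel inequality $t^n e^{-\beta t}\leq C_n(\eta)e^{-(\beta-\eta)t}$ is a perfectly good ingredient for those $L^\infty$ estimates, but it must be fed into an auxiliary norm of this type rather than directly into a $\mathcal{C}$-order comparison.
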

To simplify the calculation, we introduce an approximating norm.We
define 
\[
a_{1}(s)=\sup_{t\geq A}\frac{s(t)+\epsilon I_{k_{max}}(s)}{\nu},
\]
\[
a_{2}(s)=\sup_{t\leq A}\frac{s(t)}{\nu},
\]
\[
a_{3}(s)=\sup_{-A\leq t\leq A}\frac{s'(t)}{\nu},
\]
\[
a_{4}(s)=\sup_{t\leq-A}\frac{s(t)-s(-A)}{\nu},
\]
\[
a_{5}(s)=\sup_{k}\frac{\frac{1}{\epsilon_{k}}I_{k}(s)-I_{k-1}(s)}{\nu},
\]
\[
a_{6}(s)=\sup_{t\leq-A}\frac{s(t)-\frac{1}{\epsilon'}I_{k_{min}}(s)}{\nu},
\]
 and we set $A_{\nu}(s)=\max(a_{1}(s),a_{2}(s),a_{3}(s),a_{4}(s),a_{5}(s),a_{6}(s))$.
Finally we define $\|s\|_{\nu}=\max(A(-s),A(s))$.

\begin{prop}
For $u_{0}$ in $T_{i}(\mathcal{C})$, there exists $\nu>0$ such
that 
\[
\|.\|_{i+1}\leq\|.\|_{\nu}.
\]
\end{prop}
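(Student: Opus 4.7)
The plan is to view $\|\cdot\|_{i+1}$ as the Minkowski functional of the ``comparability ball'' $B$ around $u_0$ defined in Proposition \ref{prop:NormEqui-1}, and to view $\|s\|_\nu$ as a sum of six ``defect functionals'' $a_1,\dots,a_6$ that are in exact correspondence with the six defining inequalities of the cone $\mathcal{C}$ of Proposition \ref{prop:ConeConstruc}. The assertion to prove is then equivalent to: if $\|s\|_\nu\leq 1$, then there exist $\alpha,\beta$ with $\alpha u_0\leq u_0+s\leq \beta u_0$ and $\alpha u_0\leq u_0-s\leq \beta u_0$ satisfying $\beta-\alpha\leq r$, where $r$ is the scale fixed in Proposition \ref{prop:NormEqui-1}.

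First I would exploit the hypothesis $u_0\in T_i(\mathcal{C})$: going back to the construction in the proof of Proposition \ref{prop:ConeConstruc}, one sees that the image $T_i(\mathcal{C})$ lies strictly inside $\mathcal{C}$, i.e.\ each of the six defining inequalities is satisfied by $u_0$ with a \emph{quantitative slack}. For example, $u_0(t)>0$ for $t\leq A$ with a lower bound in terms of $I_{k_{\max}}(u_0)$, the monotonicity $u_0'(t)<0$ on $[-A,A]$ is strict with a definite gap, and $I_{k-1}(u_0)\leq \tfrac{1-c_k}{\epsilon_k}I_k(u_0)$ for some $c_k>0$, and likewise for the remaining conditions. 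Let $\nu_0>0$ be smaller than every one of these slacks (normalized by $I_{k_{\max}}(u_0)$, which plays the role of a reference amplitude).

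Next, for $\nu\leq \nu_0$, I would check condition by condition that $\|s\|_\nu\leq 1$ forces $u_0\pm s\in\mathcal{C}$ with bracketing multiples of $u_0$ at distance $O(\nu)$. Concretely, writing the requirement ``$\alpha u_0\leq u_0+s\leq\beta u_0$ and $\alpha u_0\leq u_0-s\leq \beta u_0$'' as $(\alpha-1)u_0\leq \pm s\leq(\beta-1)u_0$, each of the six cone conditions applied to $(1-\alpha)u_0+s$ and $(\beta-1)u_0- s$ becomes an inequality whose worst-case violation is controlled by the corresponding $a_j(\pm s)$; the slack of $u_0$ absorbs a $\nu$-sized perturbation, while the remaining ``$\beta-\alpha$'' appears as the balance needed to satisfy the inequalities simultaneously. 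Combining the six estimates yields $\beta-\alpha\leq C\nu\,\|s\|_\nu$ for a constant $C$ depending only on the slacks listed above. Choosing $\nu$ so small that $C\nu\leq r$ gives $s\in\|s\|_\nu\cdot B$, i.e.\ $\|s\|_{i+1}\leq\|s\|_\nu$.

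The main obstacle is the bookkeeping across the six inequalities, because the perturbation $s$ acts simultaneously on pointwise values, on derivatives on $[-A,A]$, and on the integrals $I_k$, and these quantities are coupled (for instance, $a_1$ mixes the pointwise value with $\epsilon I_{k_{\max}}(s)$, and the monotonicity constraint $a_3$ interacts with the pointwise bound $a_2$ at $t=A$). The key observation that makes this work is that, thanks to $u_0\in T_i(\mathcal{C})$, all the slacks are strict and depend only on the structural constants $\epsilon,\epsilon',(\epsilon_k),\delta,A$ of Proposition \ref{prop:ConeConstruc}, so a single $\nu>0$ can be chosen uniformly. Once the six inequalities have been handled individually, taking the maximum gives the stated comparison $\|\cdot\|_{i+1}\leq\|\cdot\|_\nu$.
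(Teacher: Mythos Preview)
Your proposal is correct and follows essentially the same approach as the paper: translate the comparability condition $\alpha u_0\leq u_0+s\leq\beta u_0$ into the six cone inequalities, use $u_0\in T_i(\mathcal{C})$ to bound the $u_0$-dependent right-hand sides uniformly from below by some $\epsilon_0>0$, and observe that replacing these by the constant $\epsilon_0$ yields exactly the functionals $a_j$ defining $\|\cdot\|_\nu$ with $\nu=\epsilon_0$. The paper's version is more direct in that it simply shows the resulting $\alpha'\leq\alpha$ (hence $\|\cdot\|_{i+1}\leq\|\cdot\|_\nu$) without detouring through the Minkowski-functional and ``slack absorption'' language, but the substance is the same.
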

\begin{proof}
Let $u_{0}\in T_{i}(\mathcal{C})$ We calculate the norm of Proposition
\ref{prop:NormEqui-1}. Let $s$ be such that

\[
\alpha u_{0}\leq u_{0}+s\leq\beta u_{0}
\]
with 
\[
\alpha=\max_{\alpha}\begin{cases}
\forall t\geq A & s(t)+\epsilon I_{k_{max}}(s)\geq(\alpha-1)[u_{0}(t)+\epsilon I_{k_{max}}(t)],\\
\forall t\leq A & s(t)\geq(\alpha-1)u_{0}(t),\\
\text{on }-A\leq t\leq A & s'(t)\leq(\alpha-1)u_{0}'(t),\\
\forall t\leq-A & s(t)-s(-A)\geq(\alpha-1)[u_{0}(t)-u_{0}(-A)],\\
\forall k\in[-2\frac{A}{\delta},2\frac{A}{\delta}] & I_{k-1}(s)-\frac{1}{\epsilon_{k}}I_{k}(s)\leq(1-\alpha)[I_{k-1}(u_{0})-\frac{1}{\epsilon_{k}}I_{k}(u_{0})],\\
\forall t\leq-A & s(t)-\frac{1}{\epsilon'}I_{k_{min}}(f)\leq(1-\alpha)[u_{0}(t)-\frac{1}{\epsilon'}I_{k_{min}}(u_{0})].
\end{cases}
\]
Because $u_{0}\in T_{i}(\mathcal{C}),$ there exists $\epsilon_{0}>0$
such that all the functions depending on $u_{0}$ on the right side
of the equation ($[u_{0}(t)+\epsilon I_{k_{max}}(t)],u_{0}(t),\cdots$)
can be bounded by $\epsilon_{0}$. We obtain

\[
\alpha'=\max_{\alpha'}\begin{cases}
\forall t\geq A & s(t)+\epsilon I_{k_{max}}(s)\geq(\alpha'-1)\epsilon_{0}\\
\forall t\leq A & s(t)\geq(\alpha'-1)\epsilon_{0}\\
\text{on }-A\leq t\leq A & s'(t)\leq(\alpha'-1)\epsilon_{0}\\
\forall t\leq-A & s(t)-s(-A)\geq(\alpha'-1)\epsilon_{0}\\
\forall k\in[-2\frac{A}{\delta},2\frac{A}{\delta}] & I_{k-1}(s)-\frac{1}{\epsilon_{k}}I_{k}(s)\leq(1-\alpha')\epsilon_{0}\\
\forall t\leq-A & s(t)-\frac{1}{\epsilon'}I_{k_{min}}(s)\leq(1-\alpha')\epsilon_{0}
\end{cases}
\]
and we then have $\alpha'\leq\alpha$. We have constructed then $\|\|_{\nu}$
with $\nu=\epsilon_{0}$.
\end{proof}

We finish the proof of Proposition \ref{prop:SmoothProof}.
\begin{proof}
We can now calculate $\|\partial_{\beta}^{n}T_{i}(\beta)\|_{\|\|_{i}\rightarrow\|\|_{\nu}}$.
Let $w$ with $\|w\|_{i}\leq1$, In particular, there exists $r>0$
such that $d(u_{0},u_{0}+r.w)\leq2r$. Therefore 

\[
(1-2r)u_{0}(t)\leq u_{0}+rw(t)\leq(1+2r)u_{0}(t)
\]
for all $t<A.$ Hence $|I_{k_{max}}(r.w)|\leq2r.I_{k_{max}}(u_{0})$
and for all $t>A$: 
\[
(1-2r)[u_{0}(t)]-4r.I_{k_{max}}(u_{0})\leq u_{0}(t)+rw(t)\leq(1+2r)u_{0}(t)+4r.I_{k_{max}}(u_{0}).
\]
Therefore

\[
[\partial_{\beta}^{n}T_{i}(\beta)w](x)=\int_{x-\tilde{x}_{i}-\tilde{x}_{i+1}}^{\infty}U_{i}(y)^{n}e^{-\beta U_{i}(y)}w(y)dy
\]
and there exist $c_{1},c_{2}$ and $c_{3}$ such that 
\[
\|[\partial_{\beta}^{n}T_{i}(\beta)w]\|_{L^{\infty}}\leq c_{1}\|u_{0}\|_{L^{\infty}},
\]
 
\[
\|[\partial_{\beta}^{n}T_{i}(\beta)w]\|_{L^{\infty}}\leq c_{2}\|u_{0}\|_{L^{\infty}}
\]
 and 
\[
\|[\partial_{\beta}^{n}T_{i}(\beta)w]'\|_{[-A,A]}\leq c_{3}\|u_{0}\|_{L^{\infty}}.
\]
There exists then $c'$ such that $\|[\partial_{\beta}^{n}T_{i}(\beta)w]\|_{\nu}\leq c'\|u_{0}\|_{L^{\infty}}$.
Then we have shown that $\|[\partial_{\beta}^{n}T_{i}(\beta)w]\|_{L^{\infty}}$
is a uniformly bounded operator for $\|\|_{i}\rightarrow\|\|_{\nu}$
and then for $\|\|_{i}\rightarrow\|\|_{i+1}.$We can now conclude
the proof of Theorem \ref{Theo-Smooth-Jelium-Classical}. Moreover,
\begin{align*}
\int_{x-\tilde{x}_{i}-\tilde{x}_{i+1}}^{\infty}U_{i}(y)^{n}e^{-\beta U_{i}(y)}w(y)dy & \leq\int_{x-\tilde{x}_{i}-\tilde{x}_{i+1}}^{\infty}(Ay^{2}){}^{n}e^{-\beta ay^{2}}w(y)dy\\
 & \leq\|w\|_{L^{\infty}}(\frac{A}{\beta a})^{n}\Gamma(n-\frac{1}{2})
\end{align*}
We can then apply Theorem \ref{TheoremAnalytic} which ends the proof
of analyticity of the free energy of the classical Jellium model.
\end{proof}

\subsection{Proof for the quantum Jellium model \label{sec:Quantique}}

\subsubsection{Decay of correlations, smoothness of the free energy.}
\begin{defn}
For any $f$ we define
\[
T_{i}f(\gamma)=\int_{E}1_{\forall t,\gamma(t)<\eta(t)+\delta_{i}}f(\eta)\nu_{i}(d\eta),
\]
where $\nu_{i}=\frac{1}{c}\int_{\mathbb{R}}\nu_{i,xx}dx$ with Radon
Nikodym density $\frac{d\nu_{i,xx}}{d\mu_{xx}}(\gamma)=e^{-\int_{0}^{\beta}U_{i}(\gamma(t))dt}$.
\end{defn}
We recall the result concerning the homogeneous case.
\begin{thm}
$T_{i}$ is a compact operator on $L^{1}((1+x^{2})^{-1}dx)$ with
a unique largest eigenvalue $\lambda_{M}>0$.
\end{thm}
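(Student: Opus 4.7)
The plan is to produce an explicit integral kernel representation of $T_i$ on endpoint densities and then combine a Hilbert--Schmidt type compactness estimate with the Krein--Rutman theorem.

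First I would disintegrate the Brownian bridge measure $\mu_{xx}$ with respect to its (common) endpoint in order to expose $T_i$ as an integral operator on $L^1((1+x^2)^{-1}dx)$. Concretely, conditioning on $\gamma(0)=\gamma(\beta)=x$ and $\eta(0)=\eta(\beta)=y$ and using the definition of $\nu_i$ one rewrites
\[
T_i f(x) = \int_{\mathbb{R}} K_i(x,y)\, f(y)\, dy,
\]
where
\[
K_i(x,y) = \mathbb{E}_{\mu_{xx}\otimes\mu_{yy}}\!\left[\mathbf{1}_{\forall t,\,\gamma(t)<\eta(t)+\delta_i}\; e^{-\int_0^\beta U_i(\gamma(t))dt}\;e^{-\int_0^\beta U_i(\eta(t))dt}\right].
\]
This reduction is the main preparatory step; it uses the homogeneous structure to treat the incoming argument as a function of the endpoint only.

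Next I would prove compactness by exploiting the quadratic growth of $U_i$. Under assumptions (H1)--(H2) one has $U_i(s)\geq qm\, s^2$, so for a Brownian bridge through $x$ the expectation $\mathbb{E}_{\mu_{xx}}(e^{-\int U_i(\gamma)dt})$ is bounded by $C e^{-c\beta x^2}$ for some $c>0$ (use Jensen or a direct lower bound on $\int \gamma(t)^2 dt$ combined with the bridge variance). Hence
\[
0\le K_i(x,y)\le C\, e^{-c\beta x^2}\, e^{-c\beta y^2},
\]
which is a fortiori $L^2(dx\,dy)$, so $T_i$ is Hilbert--Schmidt on $L^2(dx)$. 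A straightforward interpolation/comparison then transfers compactness to the weighted space $L^1((1+x^2)^{-1}dx)$: the Gaussian decay of $K_i$ in both variables dominates the polynomial weight and gives tightness and equi-integrability of $T_i(B)$ for $B$ bounded, so that Frechet--Kolmogorov (or Dunford--Pettis together with the kernel smoothing) yields compactness.

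Finally I would invoke Krein--Rutman. Strict positivity of $K_i$ must be checked: for any $x,y\in\mathbb{R}$ the event $\{\forall t,\gamma(t)<\eta(t)+\delta_i\}$ has positive $\mu_{xx}\otimes\mu_{yy}$ measure (push $\gamma$ down and $\eta$ up inside the bridges, which is a positive-probability event), and the exponential weights are strictly positive. Combining strict positivity of the kernel with the compactness just proved, Krein--Rutman produces a unique, simple, strictly positive leading eigenvalue $\lambda_M>0$ with a strictly positive eigenvector.

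The main obstacle I foresee is controlling the ordering constraint $\forall t,\gamma(t)<\eta(t)+\delta_i$, since it is a global condition on the trajectories and does not factor through the endpoints. A reflection-principle argument for Brownian bridges, or a direct two-bridge coupling estimate, should nevertheless be enough to verify both the Gaussian domination required for compactness and the strict positivity required for Krein--Rutman.
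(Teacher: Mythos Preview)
Your strategy---a Hilbert--Schmidt bound followed by Krein--Rutman for the simplicity of the top eigenvalue---is exactly the paper's. One point of care: as defined, $T_i f(\gamma)$ depends on the whole bridge $\gamma$, not only on its endpoint, so the clean reduction to an endpoint kernel $K_i(x,y)$ is not automatic, and your formula for $K_i$ already carries an extra factor $e^{-\int_0^\beta U_i(\gamma)}$ that is not part of $T_i$. The paper sidesteps this by doing the Hilbert--Schmidt estimate directly on path space, checking $\iint_{E\times E} e^{-2\int U(\gamma)}\mathbf{1}_{\gamma<\eta+\delta_i}\,d\nu(\gamma)\,d\nu(\eta)<\infty$ via the quadratic lower bound $U_i(s)\ge qms^2$ and Brownian--bridge tail estimates---precisely the Gaussian domination you invoke. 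Your positivity argument for the ordering event is the correct fleshing-out of the paper's one-word appeal to irreducibility.
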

For the reader's convenience, we have written again the proof. 
\begin{proof}
$T_{i}$ is a compact operator. Indeed let $u\in L^{1}((1+x^{2})^{-1}dx)$,
then $Tu$ is bounded, with finite variation. $T$ is Hilbert-Smicht:
\[
\iint_{E\times E}e^{-2\int U(\gamma(t))dt}1_{\forall t,\gamma(t)\leq\eta(t)+\delta_{i}}d\nu(\gamma)d\nu(\eta)<\infty.
\]
We have to check that $\nu_{i}$ are bounded measures $\mathbb{P}(|B_{t}|>y)\leq2e^{-|y|^{2}}$.
Then
\begin{align*}
\iint_{\mathbb{R\times}\Gamma}e^{-\int U_{i}(\gamma)dt}d\mu_{xx}(\gamma)dx & \leq\int_{\mathbb{R}}[\mu_{xx}(\inf_{t}\gamma(t)<\frac{x}{2})]+e^{-\beta U_{i}(\frac{x}{2})}dx\\
 & \leq\int_{\mathbb{R}}c[e^{-(\frac{x}{2})^{2}}+e^{-\beta U(\frac{x}{2})}]dx<\infty
\end{align*}

The largest eigenvalue is unique because the operator is irreducible
and we can apply the Krein Rutmann Theorem. \end{proof}
\begin{thm}
\label{thm:ConeLargerEigenvector}Let $T$ be a bounded real operator
whose spectral radius $\rho(T)$ is a non degenerate eigenvalue with
eigenvector $u_{0}$. Assume in addition that $T'=T-\rho(T)u_{0}u_{0}^{*}$
has a spectral radius $\rho(T')<\rho(T).$ Then there exists a cone
such that the operator is contracting.\end{thm}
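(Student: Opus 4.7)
The strategy is to exploit the spectral gap $\rho(T')<\rho(T)$ to build a narrow cone around the ray $\mathbb{R}_+ u_0$ on which $T$ strictly compresses, and then to invoke the Birkhoff--Hopf theorem.

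First I would recover, from the non-degeneracy of $\rho(T)$, a continuous left eigenvector $u_0^*$ (this is implicit in the definition of $T'$) normalized by $u_0^*(u_0)=1$. Then $P:=u_0u_0^*$ is a rank-one spectral projection and $T=\rho(T)P+T'$ with $T'P=PT'=0$, so $T'$ restricts to the closed invariant hyperplane $H:=\ker u_0^*$, where its spectral radius is still $\rho(T')<\rho(T)$. Fix $\epsilon>0$ with $\rho(T')+\epsilon<\rho(T)$; by Gelfand's formula there is $C>0$ such that $\|T'^n v\|\leq C(\rho(T')+\epsilon/2)^n\|v\|$ for all $n$ and all $v\in H$, which makes
\[
\|v\|_{*}:=\sum_{n\geq 0}\frac{\|T'^n v\|}{(\rho(T')+\epsilon)^n}
\]
a well-defined norm on $H$ equivalent to $\|\cdot\|$ and satisfying $\|T'v\|_{*}\leq(\rho(T')+\epsilon)\|v\|_{*}$.

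Next, fix any $c>0$ and set
\[
\mathcal{C}:=\{\alpha u_0+v:\ \alpha\geq 0,\ v\in H,\ \|v\|_{*}\leq c\alpha\}.
\]
One checks directly that $\mathcal{C}$ is convex, positively homogeneous and pointed (if $\alpha u_0+v\in\mathcal{C}\cap(-\mathcal{C})$ then $\alpha=0$, hence $v=0$), and has non-empty interior (it contains a neighborhood of $u_0$). For $w=\alpha u_0+v\in\mathcal{C}$ we have
\[
Tw=\rho(T)\alpha\,u_0+T'v,\qquad \|T'v\|_{*}\leq c'\cdot\rho(T)\alpha,\qquad c':=c\,\frac{\rho(T')+\epsilon}{\rho(T)}<c,
\]
so $T(\mathcal{C})\subset\mathcal{C}'$, where $\mathcal{C}'$ is the analogue of $\mathcal{C}$ with $c'$ in place of $c$; that is, $T$ sends $\mathcal{C}$ into a strictly smaller sub-cone.

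Finally I would bound the Hilbert diameter of $T(\mathcal{C})$. For $w_i=\alpha_i u_0+v_i\in\mathcal{C}'$, normalize $\alpha_1=\alpha_2=1$ (using the projective nature of $d_\mathcal{C}$), so that $\|v_i\|_{*}\leq c'$. The condition $\lambda w_1\leq_{\mathcal{C}} w_2$ is equivalent to $\lambda\leq 1$ together with $\|v_2-\lambda v_1\|_{*}\leq c(1-\lambda)$; since $\|v_2-\lambda v_1\|_{*}\leq c'(1+\lambda)$, this holds whenever $\lambda\leq(c-c')/(c+c')$, and symmetrically $w_2\leq_{\mathcal{C}}\mu w_1$ holds for every $\mu\geq(c+c')/(c-c')$. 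Hence
\[
\Delta(T(\mathcal{C}))\leq 2\log\frac{c+c'}{c-c'}<\infty,
\]
and Theorem~\ref{thm:(Birkhoff-Hopf-)-} yields $\kappa(T)=\tanh(\Delta(T(\mathcal{C}))/4)<1$, which is exactly the claim. The one delicate step is the construction of $\|\cdot\|_{*}$: since one controls only the spectral radius of $T'$ on $H$ and not its operator norm (which might exceed $\rho(T)$), a direct estimate does not make $T$ preserve the cone, and one must first pass through this Gelfand-type equivalent norm. Everything after that is elementary cone geometry.
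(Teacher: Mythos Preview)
Your argument is correct, and it takes a genuinely different route from the paper's own proof.

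The paper builds the cone dynamically: it takes positive linear combinations of the sets $T^{n}\big(B(u_{0},\epsilon(1-\epsilon_{n}))\big)$ for a strictly decreasing sequence $\epsilon_{n}$, uses the spectral gap only through the fact that $(T-u_{0}u_{0}^{*})^{N}$ is eventually contracting to reduce this to a finite union, and then estimates $\alpha$ and $\beta$ in $\alpha u_{0}\le T(x)\le\beta u_{0}$ by tracking how each piece $x_{i}\in T^{i}(B(u_{0},\epsilon(1-\epsilon_{i})))$ moves under one more application of $T$. Your approach instead absorbs the whole spectral-gap information into a single renorming of $H=\ker u_{0}^{*}$ via the Gelfand series $\|v\|_{*}=\sum_{n}\|T'^{n}v\|/(\rho(T')+\epsilon)^{n}$, after which the cone is just a round cone $\{\|v\|_{*}\le c\alpha\}$ around $u_{0}$ and the diameter comes out explicitly as $2\log\frac{c+c'}{c-c'}$. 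This is cleaner and yields quantitative constants directly; the paper's construction is more hands-on but is set up so that the subsequent perturbation result (stability of the cone under small compact $\delta T$) follows by re-reading the same inequalities. Your cone would serve that purpose equally well, since a small perturbation only shifts the $u_{0}$-component and the $H$-component of $Tw$ by $O(\|\delta T\|\,\alpha)$, which is harmless when $c'<c$.
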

\begin{proof}
We can suppose that the largest eigenvalue is $1$ and let $u_{0}$
be its eigenvector. We construct 
\[
\mathcal{C}=\text{positive linear combinations of }\cup_{n}T^{n}\Big(B(u_{0},\epsilon(1-\epsilon_{n}))\Big)
\]
 with $\epsilon_{n}$ a strictly decreasing sequence. Because there
exists $N$ such that $(T-u_{0}u_{0}^{*})^{n}$ is contracting for
$n\geq N$, 
\[
\mathcal{C}=\text{positive linear combinations of }\cup_{n\leq N_{1}}T^{n}\Big(B(u_{0},\epsilon(1-\epsilon_{n}))\Big).
\]

Indeed, let $x\in B(u_{0},\epsilon(1-\epsilon_{N_{1}+1}))$, $x=u_{0}+y+su_{0}$
with $u_{0}^{*}y=0$ and $\|y\|\leq c\|\epsilon\|$ and $s\leq c\|\epsilon\|$.
There exists $N_{1}$ such that $\|(T-u_{0}u_{0}^{*})^{N_{1}+1}\|\leq\frac{(1-\epsilon_{0})}{2c}$.
Then
\begin{align*}
T^{N_{1}+1}(x) & =(u_{0}u_{0}^{*})x+(T-u_{0}u_{0}^{*})^{N_{1}+1}(x)\\
 & =(1+s)u_{0}+(T-u_{0}u_{0}^{*})^{N_{1}+1}(y)\in B((1+s)u_{0},(1+s)\epsilon(1-\epsilon_{0})),
\end{align*}
 because $\|(T-u_{0}u_{0}^{*})^{N_{1}+1}(y)\|\leq\frac{\epsilon c(1-\epsilon_{0})}{2c}\leq(1+s)\epsilon(1-\epsilon_{0})$.

Let $x\in\mathcal{C}$, with $u_{0}^{*}x=1$. Then $x=\sum_{i=0}^{N_{1}}a_{i}x_{i},$
$a_{i}\geq0$ with $x_{i}\in T^{i}(B(u_{0},\epsilon(1-\epsilon_{i})))$.
Let us construct $\alpha$ and $\beta$ such that $\alpha u_{0}\leq T(x)\leq\beta u_{0}.$

First because $B(u_{0},\epsilon(1-\epsilon_{0}))\subset\mathcal{C}$,
we choose $\beta\leq\frac{\|T(x)\|}{\epsilon(1-\epsilon_{0})}$, and
we immediatly have $u_{0}-\frac{1}{\beta}T(x)\geq0.$

Second for all $i$, $T(x_{i})\in T^{i+1}(B(u_{0},\epsilon(1-\epsilon_{i})))$
and therefore $T(\frac{(1-\epsilon_{i+1})}{(1-\epsilon_{i})}x_{i})\in T^{i+1}(B(\frac{(1-\epsilon_{i+1})}{(1-\epsilon_{i})}u_{0},\epsilon(1-\epsilon_{i+1})))$.
We have then 
\[
T(\frac{(1-\epsilon_{i+1})}{(1-\epsilon_{i})}x_{i})-(1-\frac{(1-\epsilon_{i+1})}{(1-\epsilon_{i})})u_{0}\in T^{i+1}(B(u_{0},\epsilon(1-\epsilon_{i+1})))
\]
and also

\[
T(\frac{(1-\epsilon_{i+1})}{(1-\epsilon_{i})}x_{i})\geq(1-\frac{(1-\epsilon_{i+1})}{(1-\epsilon_{i})})u_{0}.
\]
So with $M=\max\frac{(1-\epsilon_{i+1})}{(1-\epsilon_{i})}$ and $m=\min\frac{(1-\epsilon_{i+1})}{(1-\epsilon_{i})}$
we have 
\[
MT(x)\geq m(\sum a_{i})u_{0}
\]
and we can conclude that 
\[
\frac{m\sum a_{i}}{M}u_{0}\leq T(x).
\]

\end{proof}
Such a construction is stable under small compact perturbations. 
\begin{prop}
There exists $\delta_{0}>0$ such that for $\delta T$ compact operator
with $\|\delta T\|\leq\delta_{0}$, 
\[
\Delta_{\mathcal{C}}((T+\delta T)(\mathcal{C}))<2\Delta_{\mathcal{C}}(T(\mathcal{C}))
\]
where $\mathcal{C}$ is the cone constructed in Theorem \ref{thm:ConeLargerEigenvector}

In particular $T+\delta T$ is a positive contracting operator for
$d_{\mathcal{C}}.$\end{prop}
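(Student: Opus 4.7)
The strategy is a triangle-inequality perturbation argument. Writing $S = T + \delta T$, the plan is to show that $d_{\mathcal{C}}(S(x), T(x))$ can be made uniformly small in $x \in \mathcal{C}$ by choosing $\delta_0$ small enough, and then to conclude via
$$d_{\mathcal{C}}(S(x), S(y)) \le d_{\mathcal{C}}(S(x), T(x)) + d_{\mathcal{C}}(T(x), T(y)) + d_{\mathcal{C}}(T(y), S(y)).$$
The middle term is bounded by $\Delta_{\mathcal{C}}(T(\mathcal{C}))$, and if the outer two are each less than $\Delta_{\mathcal{C}}(T(\mathcal{C}))/2$, the desired strict bound follows. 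Checking $S(x) \in \mathcal{C}$ along the way gives that $S$ is order-preserving.

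The first step is to set up uniform bounds. Any $x \in \mathcal{C}$ can be written as $x = \sum_i a_i g_i$ with $a_i \ge 0$ and each $g_i$ in the finite union $\bigcup_{n \le N_1} T^n(B(u_0, \epsilon(1-\epsilon_n)))$ used in the construction of $\mathcal{C}$. These generators have bounded norm (since $\|T^n\|$ is finite for $n \le N_1$) and satisfy $u_0^* g_i \ge c_1 > 0$ because $g_i = T^n(u_0 + \eta) = u_0 + T^n \eta$ with $\|\eta\| \le \epsilon$. Normalizing $u_0^* x = 1$ therefore forces $\sum_i a_i \le c_1^{-1}$, so $\|x\|$ is uniformly bounded.

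Next, I would extract from the proof of Theorem \ref{thm:ConeLargerEigenvector} the uniform lower bound $T(x) \geq_{\mathcal{C}} \alpha_0 u_0$ valid for all such normalized $x$, with $\alpha_0 > 0$ independent of $x$ (this is the estimate $\frac{m \sum a_i}{M} u_0 \le T(x)$ appearing there, together with $\sum a_i$ bounded below by the normalization). Because $B(u_0, \epsilon(1-\epsilon_0)) \subset \mathcal{C}$, there exists $C > 0$ with $\pm w \leq_{\mathcal{C}} C \|w\| u_0$ whenever $\|w\|$ is small. Applying this to $w = \delta T(x)$ with $\|\delta T(x)\| \le \delta_0 \|x\| \le C' \delta_0$ gives
$$T(x) - C C' \delta_0 u_0 \leq_{\mathcal{C}} S(x) \leq_{\mathcal{C}} T(x) + C C' \delta_0 u_0,$$
and combining with $\alpha_0 u_0 \leq_{\mathcal{C}} T(x)$ yields
$$\left(1 - \frac{C C'}{\alpha_0}\delta_0\right) T(x) \leq_{\mathcal{C}} S(x) \leq_{\mathcal{C}} \left(1 + \frac{C C'}{\alpha_0}\delta_0\right) T(x).$$
Hence $d_{\mathcal{C}}(S(x), T(x)) \le \log \frac{1 + c \delta_0}{1 - c \delta_0}$ uniformly in $x$, and one takes $\delta_0$ small enough that this is less than $\Delta_{\mathcal{C}}(T(\mathcal{C}))/2$.

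The main obstacle is the uniform lower bound $T(x) \geq_{\mathcal{C}} \alpha_0 u_0$ in the third paragraph: it requires that every generator $g_i$ maps under $T$ to something with a uniformly positive coefficient in front of $u_0$, which relies on the careful balance in the construction of $\mathcal{C}$, in particular on the strictly decreasing sequence $\epsilon_n$ and the choice of $N_1$ ensuring that $T^{N_1+1}$ lands back in a scaled copy of the smallest ball $B(u_0, \epsilon(1-\epsilon_0))$. Once this uniform interior estimate is secured, the rest is a short perturbation computation; the compactness of $\delta T$ plays no role in the bound itself but is needed to keep the spectral framework underlying $\mathcal{C}$ available.
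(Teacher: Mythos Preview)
Your argument is correct and takes a genuinely different route from the paper. The paper's proof reopens the construction of Theorem~\ref{thm:ConeLargerEigenvector} and recomputes the $\alpha$ and $\beta$ comparison constants for $T+\delta T$ generator by generator: it keeps $u_0$, adjusts $\beta$ to $\|(T+\delta T)(x)\|/(\epsilon(1-\epsilon_0))$, and shows that each $(T+\delta T)(\tfrac{1-\epsilon_{i+1}}{1-\epsilon_i}x_i)$ still dominates a positive multiple of $u_0$, with the margin shrunk by $\delta_0/(\epsilon(1-\epsilon_0))$. You instead abstract the key output of that construction---the uniform interior estimate $T(x)\geq_{\mathcal C}\alpha_0 u_0$ for normalized $x$, which the paper records separately as Remark~\ref{rem:Open}---and run a clean perturbation: sandwich $\delta T(x)$ between $\pm C\|\delta T(x)\|u_0$ using $B(u_0,\epsilon(1-\epsilon_0))\subset\mathcal C$, convert $u_0$ into $T(x)/\alpha_0$, and finish by the triangle inequality for $d_{\mathcal C}$. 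Your approach is more conceptual and makes transparent that the only thing needed is that $T(\mathcal C)$ sits uniformly in the interior of $\mathcal C$; the paper's approach avoids invoking that remark (which it states only afterwards) at the cost of repeating the earlier computation. Your closing observation that compactness of $\delta T$ is not used in the estimate is also accurate---the paper's proof does not use it either.
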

\begin{proof}
We rewrite the proof of Theorem \ref{thm:ConeLargerEigenvector}.
We keep $u_{0}$ because $(T+\delta T)(u_{0})\in B(u_{0},(\epsilon(1-\epsilon_{0})))$.
First, it is enough to change $\beta\leq\frac{\|(T+\delta T)(x)\|}{\epsilon(1-\epsilon_{0})}$
and we have $u_{0}-\frac{1}{\beta}(T+\delta T)(x)\geq0$. Second,
we also have 
\begin{align*}
 & (T+\delta T)(\frac{(1-\epsilon_{i+1})}{(1-\epsilon_{i})}x_{i})-\delta T(\frac{(1-\epsilon_{i+1})}{(1-\epsilon_{i})}x_{i})\\
 & \quad\geq(T+\delta T)(\frac{(1-\epsilon_{i+1})}{(1-\epsilon_{i})}x_{i})-u_{0}\frac{\|\delta T(\frac{(1-\epsilon_{i+1})}{(1-\epsilon_{i})}x_{i})\|}{\epsilon(1-\epsilon_{0})}\\
 & \quad\geq[(1-\frac{(1-\epsilon_{i+1})}{(1-\epsilon_{i})})-\frac{\delta_{0}}{\epsilon(1-\epsilon_{0})}]u_{0}
\end{align*}
and we can finish the proof as previously for $\delta_{0}$ small
enough.\end{proof}
\begin{rem}
Actually we have that $T(x)\in\mathring{\mathcal{C}}$ for all $x\in\mathcal{C}$.
Indeed, as previously\label{rem:Open} 
\[
\frac{m\sum a_{i}}{M}u_{0}\leq T(x)
\]
and then for any $\|y\|\leq\frac{\epsilon(1-\epsilon_{0})}{M}m\sum a_{i}$
we have $T(x)+y\geq\frac{m\sum a_{i}}{M}u_{0}+y\ge0$.
\end{rem}
The construction of the cone is simple enough that we can calculate
the norm of Theorem \ref{prop:NormEqui-1}. Because of the previous
remark it is equivalent to the space norm.
\begin{prop}
There exists $c>0$ such that for any $y$ in the projected space,
\[
c\|y\|\leq\|y\|_{N}\leq\frac{1}{c}\|y\|
\]
where $\|\|_{N}$ is the norm constructed in Proposition \ref{prop:NormEqui-1}
for the cone $\mathcal{C}$ in a neighborhood of $T(x)$.\end{prop}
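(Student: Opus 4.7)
The norm $\|\cdot\|_N$ from Proposition~\ref{prop:NormEqui-1} is, up to the fixed constant $r$, the Minkowski gauge of a symmetric convex set $B\subset H$, namely those $s$ for which both $T(x)+s$ and $T(x)-s$ admit bracketings $\alpha_\pm T(x)\le T(x)\pm s\le \beta_\pm T(x)$ with spreads $\beta_\pm-\alpha_\pm\le r$. Proving norm equivalence is therefore equivalent to sandwiching $B$ between two ambient balls in $H$. The key input is Remark~\ref{rem:Open}: $T(x)\in\mathring{\mathcal{C}}$, so there exists $\rho>0$ with $B(T(x),\rho)\subset\mathcal{C}$.

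\textbf{Ambient ball $\subset B$ (upper bound on $\|\cdot\|_N$).} For $s\in H$ with small $\|s\|$, I would take $\alpha=1-\|s\|/\rho$ and $\beta=1+\|s\|/\rho$. Then
\[
(1-\alpha)T(x)+s=\tfrac{\|s\|}{\rho}\bigl(T(x)+\rho\, s/\|s\|\bigr)\in\overline{\mathcal{C}},\qquad(\beta-1)T(x)-s=\tfrac{\|s\|}{\rho}\bigl(T(x)-\rho\, s/\|s\|\bigr)\in\overline{\mathcal{C}},
\]
because $T(x)\pm\rho\, s/\|s\|\in\overline{B(T(x),\rho)}\subset\overline{\mathcal{C}}$. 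The two analogous inequalities for $-s$ are identical, so $s\in B$ as soon as $2\|s\|/\rho\le r$. Taking gauges yields $\|s\|_N\le(2/\rho)\|s\|$.

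\textbf{$B\subset$ ambient ball (lower bound on $\|\cdot\|_N$).} Since $\mathcal{C}$ is a closed cone with non-empty interior in a Banach space, it is normal: there exists $\gamma>0$ such that $0\le a\le b$ implies $\|a\|\le\gamma\|b\|$. To exploit this, I would choose the complementary hyperplane $H$ in Proposition~\ref{prop:NormEqui-1} as $\ker\phi$, where $\phi\in\mathcal{C}^*$ is normalized by $\phi(T(x))=1$; such a $\phi$ exists because $T(x)\in\mathring{\mathcal{C}}$ forces every nonzero element of $\mathcal{C}^*$ to be strictly positive on $T(x)$. For $s\in H$ one has $\phi(T(x)\pm s)=1$, so applying $\phi$ to the bracket inequalities gives $\alpha_\pm\le 1\le\beta_\pm$; combined with $\beta_\pm-\alpha_\pm\le r$ this forces $\beta_\pm-1\le r$ and $1-\alpha_\pm\le r$. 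Hence $-rT(x)\le s\le rT(x)$ in the cone order, and normality yields $\|s\|\le\gamma r\|T(x)\|$. Comparing gauges, $\|s\|_N\ge(\gamma\|T(x)\|)^{-1}\|s\|$.

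\textbf{Main obstacle.} The technical heart is the normality/functional step: producing a $\phi\in\mathcal{C}^*$ adapted to $T(x)$ and invoking the standard fact that a closed cone with non-empty interior is normal. If Proposition~\ref{prop:NormEqui-1} has been stated for an arbitrary hyperplane $H_0$, a short argument shows that the local identification of the projective space with either $H_0$ or $\ker\phi$ produces uniformly equivalent norms near $T(x)$, so one may freely reduce to the $\ker\phi$ case. The upper bound, by contrast, depends only on the interior property already recorded in Remark~\ref{rem:Open} and requires no normality input.
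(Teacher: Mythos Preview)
Your upper bound (the inequality $\|y\|_N\le (2/\rho)\|y\|$) coincides with the paper's argument: both exploit Remark~\ref{rem:Open} to find a ball $B(T(x),\rho)\subset\mathcal{C}$ and read off the bracketing $\bigl(1-\|s\|/\rho\bigr)T(x)\le T(x)+s\le\bigl(1+\|s\|/\rho\bigr)T(x)$.

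For the lower bound you take a genuinely different route. The paper does not invoke normality or a dual functional; it uses the explicit shape of the cone built in Theorem~\ref{thm:ConeLargerEigenvector}, namely that for $\epsilon$ small one has $\mathcal{C}\subset\operatorname{cone}\bigl(B(u_0,\tfrac12)\bigr)$. From this ``ice-cream cone'' containment one sees directly that $T(x)+y\notin\mathcal{C}$ once $\|y\|$ is of order one, which forces $1-\alpha\gtrsim s\|y\|$ in any bracketing $\alpha T(x)\le T(x)+sy$ and yields $\|y\|_N\gtrsim\|y\|$. Your approach via a functional $\phi\in\mathcal{C}^*$ with $H=\ker\phi$ and the normality inequality is cleaner and more transportable, and it does give the stated bound once normality is known.

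There is, however, a real gap in your justification of normality. The claim ``a closed cone with non-empty interior in a Banach space is normal'' is false in general: in $C^1([0,1])$ with norm $\|f\|=\|f\|_\infty+\|f'\|_\infty$, the cone $\{f\ge 0\}$ has non-empty interior, yet $f_n(t)=\tfrac1n(1+\sin n^2 t)$ satisfies $0\le f_n\le 2/n$ while $\|f_n\|\ge n$, so no normality constant exists. What rescues your argument here is exactly the structural fact the paper uses: the specific cone of Theorem~\ref{thm:ConeLargerEigenvector} is contained in $\operatorname{cone}\bigl(B(u_0,\tfrac12)\bigr)$, and any subcone of an ice-cream cone is normal (if $0\le_{\mathcal{C}} a\le_{\mathcal{C}} b$ then $a,b-a$ lie in the ice-cream cone, and there the inequality $\|a\|\le\gamma\|b\|$ is elementary). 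So your lower-bound strategy is salvageable, but you must replace the false general principle by this concrete containment --- which is precisely the observation the paper exploits directly.
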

\begin{proof}
Because of the previous remark, $T(x)\in\mathring{\mathcal{C}}.$
Therefore there exists $r>0$ such that $B(T(x),r)\subset\mathcal{C}$.
For any $y$ we have 
\[
T(x)\left(1-\frac{s\|y\|}{r}\right)\leq T(x)+sy\leq T(x)\left(1+\frac{s\|y\|}{r}\right)
\]
and we obtain $d_{\mathcal{C}}(T(x)+sy,T(x))\leq2\frac{s\|y\|}{r}+o(\frac{s\|y\|}{r})$.
For the other direction, $\mathcal{C\subset\text{cone from }}B(u_{0},\frac{1}{2})$
(for $\epsilon$ small). In addition $T(x)+y\notin\mathcal{C}$ for
$\|y\|=1$ and then $\alpha T(x)\leq T(x)+sy$ implies $\alpha\leq(1-s)$
and so $\|y\|_{N}\leq\|y\|.$ 
\end{proof}
We can now finish the proof of Theorem \ref{Theo-smooth-quantum}
\begin{proof}
{[}Theorem \ref{Theo-smooth-quantum}{]} $T(\beta)$ is $C^{\infty}$
for the norm $\|\|$ and thanks to the previous proposition also for
any norm constructed $\||_{N}$ around $T(\beta)(x)$. We can then
apply Theorem \ref{thm:Smooth-1-1}. The analyticity follows as well:
if $T(\beta)$ is analytic with coefficient bounded by $r^{n}$ for
the norm $\|\|$ then the coefficients are bounded by $cr^{n}$ for
the norm $\mathcal{N}$.
\end{proof}
We focus now on the decay of correlation and the proof of Theorem
\ref{Theo-decay-correlation-quantum}. Recall that we want to prove
that there exists $\kappa<1$ such that 

\[
|\rho_{k}(x_{i_{1}},x_{i_{2}},\text{\dots},x_{i_{k}})-\prod\rho_{1}(x_{i_{1}})|\leq C_{k}\kappa^{\min|i_{l}-i_{l+1}|}
\]

\begin{proof}
{[}Theorem \ref{Theo-decay-correlation-quantum}{]} There exists $\epsilon_{0}>0$
such that for $f$ with $\|f\|_{L^{1}}\leq\epsilon_{0}$, $T_{i+1}(1+f)T_{i}$
are positive operator for the cone $\mathcal{C}$. 

Indeed, let $u\in\mathcal{C}$, then $T_{i}(u)\in L^{\infty}(\Gamma)$,
therefore $fT_{i}(u)\in L^{1}$ with $\|fT_{i}(u)\|_{L^{1}}\leq c\delta_{0}\|u\|$,
and finally $\|T_{i+1}fT_{i}(u)\|_{L^{\infty}}\leq c'\delta\|u\|.$
Because $T(\mathcal{C})$ is compact on the projected space and thanks
to Remark \ref{rem:Open}, there exists $r>0$ such that for all $i$,
$u\in\mathcal{C}$, $B(T_{i+1}T_{i}(u),r\|u\|)\subset\mathcal{C}$.
As a conclusion $T_{i+1}(1+f)T_{i}(u)=T_{i+1}T_{i}(u)+T_{i+1}fT_{i}(u)\in\mathcal{C}$.
We can now apply Theorem \ref{thm:TwoPointCorelation} with the same
notation. We find

\begin{align*}
 & e^{-2(k+1)R(\sum_{j=0}^{k}\kappa^{K_{j+1}-K_{j}})}\prod_{i=1}^{k}\rho_{K_{i}}(T_{l_{i}+1}(1+f_{i})T_{l_{i}})\\
 & \quad\leq\rho_{K_{k},...,K_{1}}(T_{l_{k}+1}(1+f_{k})T_{l_{k}},...,T_{l_{1}}(1+f_{1})T_{l_{1}})
\end{align*}
and 
\begin{align*}
 & \rho_{K_{k},...,K_{1}}(T_{l_{k}+1}(1+f_{k})T_{l_{k}},...,T_{l_{1}}(1+f_{1})T_{l_{1}})\\
 & \quad\leq e^{2(k+1)R(\sum_{j=0}^{k}\kappa^{K_{j+1}-K_{j}})}\prod_{i=1}^{k}\rho_{K_{i}}(T_{l_{i}+1}(1+f_{i})T_{l_{i}}).
\end{align*}
The rest follows from a induction on $k$ and this concludes the proof
of Theorem \ref{Theo-decay-correlation-quantum}.
\end{proof}
\appendix

\section{Proof of Proposition \ref{PropMarginalTronque}}

If $I'=\text{\{}i_{1},i_{2}\text{\}}$, then $x_{i_{1}}$ and $x_{i_{2}}$
are independent, $\rho_{2}(x_{i_{1}},x_{i_{2}})=\rho(x_{i_{1}})\rho(x_{i_{2}})$
and then $\rho^{T}(x_{i_{1}},x_{i_{2}})=0$. For larger a $I'$, we
have
\begin{align*}
 & \sum_{I_{1}\cup I_{2}\cup...\cup I_{r}=I',}\prod_{l=1}^{r}\rho_{|I_{l}|}^{T}((x_{i})_{i\in I_{l}})\\
 & =\sum_{\substack{I_{1}\cup I_{2}\cup...\cup I_{r}=\text{\{}1,...,n\text{\}}\\
I_{l}\subset I\text{ or }I_{l}\subset J.
}
}\prod_{l=1}^{r}\rho_{|I_{l}|}^{T}((x_{i})_{i\in I_{l}})\\
 & =\left(\sum_{\substack{I_{1}\cup I_{2}\cup...\cup I_{r_{1}}=I\cap I'}
}\prod_{l=1}^{r_{1}}\rho_{|I_{l}|}^{T}((x_{i})_{i\in I_{l}})\right)\left(\sum_{\substack{I_{1}\cup I_{2}\cup...\cup I_{r_{2}}=J}
\cap I'}\prod_{l=1}^{r_{1}}\rho_{|I_{l}|}^{T}((x_{i})_{i\in I_{l}})\right)\\
 & =\rho_{|I'\cap I|}((x_{i})_{i\in I\cap I'})\rho_{|I'\cap J|}((x_{i})_{i\in J\cap I'})
\end{align*}
and therefore
\begin{align*}
 & \rho_{|I'|}^{T}((x_{i})_{i\in I})\\
 & =\rho_{|I'\cap I|}((x_{i})_{i\in I\cap I'})\rho_{|I'\cap J|}((x_{i})_{i\in J\cap I'})-\sum_{I_{1}\cup I_{2}\cup...\cup I_{r}=I',}\prod_{l=1}^{r}\rho_{|I_{l}|}^{T}((x_{i})_{i\in I_{l}})\\
 & =0.
\end{align*}

\bibliographystyle{plain}
\bibliography{BiblioLocalisationAnderson}

\begin{thebibliography}{10}

\bibitem{AizJanJun-10}
Michael Aizenman, Sabine Jansen, and Paul Jung.
\newblock {Symmetry Breaking in Quasi-1D Coulomb Systems}.
\newblock {\em Ann. Henri Poincar\'e}, 11(8):1453--1485, Dec 2010.

\bibitem{aizenman1980structure}
Michael Aizenman and Philippe~A Martin.
\newblock Structure of {G}ibbs states of one dimensional {C}oulomb systems.
\newblock {\em Communications in Mathematical Physics}, 78(1):99--116, 1980.

\bibitem{amann1976fixed}
Herbert Amann.
\newblock Fixed point equations and nonlinear eigenvalue problems in ordered
  {B}anach spaces.
\newblock {\em SIAM review}, 18(4):620--709, 1976.

\bibitem{Baxter-63}
R.~J. Baxter.
\newblock Statistical mechanics of a one-dimensional {C}oulomb system with a
  uniform charge background.
\newblock {\em Proc. Cambridge Philos. Soc.}, 59:779--787, 1963.

\bibitem{birkhoff1957extensions}
Garrett Birkhoff.
\newblock Extensions of {J}entzsch's theorem.
\newblock {\em Transactions of the American Mathematical Society},
  85(1):219--227, 1957.

\bibitem{blanc2015crystallization}
Xavier Blanc and Mathieu Lewin.
\newblock {The Crystallization Conjecture: A Review}.
\newblock {\em EMS Surv. Math. Sci.}, 2(2):219--306, 2015.

\bibitem{borwein1994entropy}
Jonathan~M Borwein, Adrian~Stephen Lewis, and Roger~D Nussbaum.
\newblock Entropy minimization, dad problems, and doubly stochastic kernels.
\newblock {\em Journal of Functional Analysis}, 123(2):264--307, 1994.

\bibitem{brack1993physics}
Matthias Brack.
\newblock The physics of simple metal clusters: self-consistent {J}ellium model
  and semiclassical approaches.
\newblock {\em Reviews of modern physics}, 65(3):677, 1993.

\bibitem{brascamp2002some}
HJ~Brascamp and EH~Lieb.
\newblock Some inequalities for {G}aussian measures and the long-range order of
  the one-dimensional plasma.
\newblock In {\em Inequalities}, pages 403--416. Springer, 2002.

\bibitem{Choquard-75}
Ph. Choquard.
\newblock On the statistical mechanics of one-dimensional {C}oulomb systems.
\newblock {\em Helv. Phys. Acta}, 48(4):585--598, 1975.

\bibitem{cohen1979ergodic}
Joel~E Cohen.
\newblock Ergodic theorems in demography.
\newblock {\em Bulletin (New Series) of the American Mathematical Society},
  1(2):275--295, 1979.

\bibitem{ErdosUniversality}
L\'aszl\'o Erd\H{o}s.
\newblock Universality for random matrices and log-gases.
\newblock In {\em Current developments in mathematics 2012}, pages 59--132.
  Int. Press, Somerville, MA, 2013.

\bibitem{eveson1995elementary}
Simon~P Eveson and Roger~D Nussbaum.
\newblock An elementary proof of the birkhoff-hopf theorem.
\newblock In {\em Mathematical Proceedings of the Cambridge Philosophical
  Society}, volume 117, pages 31--55. Cambridge University Press, 1995.

\bibitem{forrester2010log}
Peter~J Forrester.
\newblock {\em Log-gases and random matrices (LMS-34)}.
\newblock Princeton University Press, 2010.

\bibitem{GiuVig-05}
G.~Giuliani and G.~Vignale.
\newblock {\em Quantum Theory of the Electron Liquid}.
\newblock Cambridge University Press, 2005.

\bibitem{GruLebMar-81}
Ch. Gruber, {Joel L.} Lebowitz, and {Ph. A.} Martin.
\newblock Sum rules for inhomogeneous {C}oulomb systems.
\newblock {\em J. Chem. Phys.}, 75(2):944--954, 1981.

\bibitem{GruLugMar-78}
Ch~Gruber, Ch~Lugrin, and Ph~A Martin.
\newblock Equilibrium equations for classical systems with long range forces
  and application to the one dimensional {C}oulomb gas.
\newblock {\em Helv Phys. Acta}, 51(5-6):829--866, 1978.

\bibitem{guivarc1988theoremes}
Yves Guivarc'h and Jean Hardy.
\newblock Th{\'e}or{\`e}mes limites pour une classe de cha{\^\i}nes de {M}arkov
  et applications aux diff{\'e}omorphismes d'{A}nosov.
\newblock 24(1):73--98, 1988.

\bibitem{hall2014martingale}
Peter Hall and Christopher~C Heyde.
\newblock {\em Martingale limit theory and its application}.
\newblock Academic press, 2014.

\bibitem{herve2008vitesse}
Lo\"{i}c Herv\'e.
\newblock Vitesse de convergence dans le th\'eor\`eme limite central pour des
  cha\^\i nes de {M}arkov fortement ergodiques.
\newblock {\em Ann. Inst. Henri Poincar\'e Probab. Stat.}, 44(2):280--292,
  2008.

\bibitem{HohKoh-64}
P.~Hohenberg and W.~Kohn.
\newblock Inhomogeneous electron gas.
\newblock {\em Phys. Rev.}, 136(3B):B864--B871, Nov 1964.

\bibitem{hopf1963inequality}
Eberhard Hopf.
\newblock An inequality for positive linear integral operators.
\newblock {\em J. Math. Mech.}, 12:683--692, 1963.

\bibitem{JanJun-14}
S.~Jansen and P.~Jung.
\newblock Wigner crystallization in the quantum 1d jellium at all densities.
\newblock {\em Comm. Math. Phys.}, pages 1--22, 2014.

\bibitem{JanLieSei-09}
S.~Jansen, E.~H. Lieb, and R.~Seiler.
\newblock Symmetry breaking in {L}aughlin's state on a cylinder.
\newblock {\em Comm. Math. Phys.}, 285(2):503--535, Jan 2009.

\bibitem{jansen2014wigner}
Sabine Jansen and Paul Jung.
\newblock Wigner crystallization in the quantum 1d {J}ellium at all densities.
\newblock {\em Communications in Mathematical Physics}, 331(3):1133--1154,
  2014.

\bibitem{KUNZ1974303}
H~Kunz.
\newblock The one-dimensional classical electron gas.
\newblock {\em Annals of Physics}, 85(2):303 -- 335, 1974.

\bibitem{le1982theoremes}
{\'E}mile Le~Page.
\newblock Th{\'e}oremes limites pour les produits de matrices al{\'e}atoires.
\newblock In {\em Probability measures on groups}, pages 258--303. Springer,
  1982.

\bibitem{lewin2017statistical}
Mathieu Lewin, Elliott~H. Lieb, and Robert Seiringer.
\newblock {Statistical Mechanics of the Uniform Electron Gas}.
\newblock {\em J. \'Ec. polytech. Math.}, 5:79--116, 2018.

\bibitem{LunMar-83}
S.~Lundqvist and N.H. March, editors.
\newblock {\em {Theory of the Inhomogeneous Electron Gas}}.
\newblock Physics of Solids and Liquids. Springer US, 1983.

\bibitem{ParYan-94}
R.G. Parr and W.~Yang.
\newblock {\em Density-Functional Theory of Atoms and Molecules}.
\newblock International Series of Monographs on Chemistry. Oxford University
  Press, USA, 1994.

\bibitem{pfeuty1970one}
Pierre Pfeuty.
\newblock The one-dimensional {I}sing model with a transverse field.
\newblock {\em Annals of Physics}, 57(1):79--90, 1970.

\bibitem{ruelle1999statistical}
David Ruelle.
\newblock {\em Statistical mechanics: Rigorous results}.
\newblock World Scientific, 1999.

\bibitem{wigner1934interaction}
Eugene Wigner.
\newblock On the interaction of electrons in metals.
\newblock {\em Physical Review}, 46(11):1002, 1934.

\end{thebibliography}

\end{document}